\theoremstyle{plain}
\newtheorem{thm}{Theorem}[section]
\newtheorem{lem}[thm]{Lemma}
\newtheorem{defn}[thm]{Definition}
\newtheorem{prop}[thm]{Proposition}
\newtheorem{cor}[thm]{Corollary}
\newcommand{\C}{{\mathbb C}}
\newcommand{\B}{{\mathbb B}}
\newcommand{\bB}{{\mathbb S}}
\newcommand{\A}{{\mathcal A}}
\newcommand{\Ra}{{\mathcal R}}
\newcommand{\I}{{\mathcal I}}
\newcommand{\T}{{\mathcal T}}
\newcommand{\K}{{\mathcal K}}
\newcommand{\Q}{{\mathcal Q}}
\newcommand{\M}{{\mathcal M}}
\renewcommand{\L}{\mathcal L}
\newcommand{\e}{\varepsilon}
\newcommand{\p}{\partial}
\newcommand{\z}{\zeta}
\title[Corona theorem in Hardy and Morrey spaces]{The corona theorem in weighted Hardy and Morrey spaces}
\author{Carme Cascante}
\address{C. Cascante: Dept.\ Matem\`atica Aplicada i An\`alisi,Universitat  de Barcelona, Gran Via 585, 08071 Barcelona, Spain}
\email{cascante@ub.edu}
\author{Joan F\`abrega}
\address{Joan F\`abrega: Dept.\ Matem\`atica Aplicada i An\`alisi,Universitat  de Barcelona, Gran Via 585, 08071 Barcelona, Spain}
\email{joan$_{-}$fabrega@ub.edu}
\author{Joaqu\'\i n M. Ortega}
\address{J.M. Ortega: Dept.\ Matem\`atica Aplicada i An\`alisi,Universitat  de Barcelona, Gran Via 585, 08071 Barcelona, Spain}
\email{ortega@ub.edu}
\keywords{}
\subjclass[2000]{47,32}
\date{\today}
\thanks{Partially supported by  DGICYT Grant MTM2008-05561-C02-01, DURSI Grant 2009SGR 1303 and Grant MTM2008-02928-E}
\begin{document}

\begin{abstract} The main goal of this paper is to give an unified proof of the corona problem on weighted Hardy spaces and on Morrey spaces. We use a technique that allows to reduce the problem to the Hardy spaces $H^2(\theta)$.
\end{abstract}

\maketitle

\section{Introduction}

Let $\B$ denote the unit ball of $\C^n$ and $\bB$ its boundary. Let $d\nu$ and $d\sigma$ denote the corresponding Lebesgue measures on $\B$ and $\bB$. In \cite{An1} and \cite{Am} the authors introduce the so called $H^p-$corona problem, which states that if $g_1,\ldots,g_m$ are bounded analytic functions on $\B,$ satisfying
\begin{equation}\label{eqn:coronacond}
\inf\{|g(z)|^2=|g_1(z)|^2+\cdots+|g_m(z)|^2:\,z\in \B\}>0,
\end{equation}
then for any $1\le p<\infty,$ the map $\M_g:H^p\times \cdots\times H^p\rightarrow H^p$ defined by $(f_1,\ldots,f_m)\rightarrow g_1f_1+\ldots +g_mf_m$ is surjective.
Analogous problems for Bergman, Lipschitz, Besov, BMOA and Bloch spaces in the unit ball or in a more general class of domains have been considered by several authors (see for instance \cite{An-Car2, An-Car3}, \cite{Cos-Saw-Wick1} \cite{Kr-Li}, \cite{Lin}, \cite{Or-Fa1, Or-Fa2, Or-Fa3} ).

The goal of this paper is the study of the corona problem for holomorphic weighted  Hardy spaces with respect to weights on $\bB$ of the Muckenhoupt class $\A_p$, and for Morrey spaces.

For $1< p<\infty$, and $\theta\in \A_p$, the Hardy space $H^p(\theta)$ consists of holomorphic functions $f$ on $\B$ such that 
\begin{equation}
\|f\|_{H^p(\theta)}=\left(\sup_r\int_{\bB} |f(r\z)|^p\theta(\z) d\sigma(\z)\right)^{1/p}<\infty.
\end{equation}

For $1< p<\infty$ and $-1< s\le n/p,$ we also define the Morrey-Campanato space $M^{p,s}$ on $\bB$ given by 
$$ M^{p,s}=\left\{f\in L^p(\bB):\,\|f\|_{M^{p,s}}<\infty\right\},$$
where 
\begin{equation}\label{eqn:Morreynorm1}
\|f\|_{{p,s}}=\|f\|_p+\sup_{I_{\z,\e}}\left(\e^{sp-n}\int_{I_{\z,\e}}|f(\eta)-f(\z)|^pd\sigma(\eta)\right)^{1/p},
\end{equation}
$\|f\|_p$ denotes the usual $L^p(\bB)-$ norm of $f,$ and 
$I_{\z,\e}=\{\eta\in \bB;\,|1-\bar\z \eta|<\e\}.$

It is clear that for $s=n/p$ the space  $M^{p,n/p}$ coincides with $L^p(\bB)$ and that for $s=0$,  $M^{p,0}$  coincides with the non isotropic BMO space. It is also well-known that  for $-1<s<0$  the space $ M^{p,s}$ coincides with the non isotropic Lipschitz space $\Lambda_s.$

Let  $HM^{p,s}=M^{p,s}\cap H^p$ be the corresponding holomorphic Morrey space. 

The main goal of this paper is to obtain necessary and sufficient conditions on  holomorphic functions $g_1,\ldots, g_m$ on $\B,$ such that  $\M_g$ maps $ X\times\cdots\times X$ onto $X$, where $X=H^p(\theta)$ or $X=HM^{p,s}$.

We believe that the interest of the paper lies not only on the results but on the techniques that allow to reduce the proof for $H^p(\theta)$ to the particular case $p=2$ and any weight in $\A_2$. The general result for every weighted Hardy space $H^p(\theta)$ is then a consequence of Rubio de Francia extrapolation theorem. This method gives an alternative simpler proof, even for the decompositions for the unweighted Hardy spaces $H^p$.

The corona problem for the Morrey-Campanato spaces in the scale $-1< s < 0$, corresponding to  Lipschitz spaces, was  considered in \cite{Kr-Li}, the case $s=0$ corresponding to  BMOA in \cite{Or-Fa1} and \cite{An-Car3}, and the case $s=n/p$, corresponding to the $H^p$ space, has been previously mentioned. 
Therefore,  only remains to consider the case $0<s<n/p.$ 

In this case the  norm \eqref{eqn:Morreynorm1} is equivalent to the Morrey norm (see for instance \cite{Ku-John-Fu})
\begin{equation}\label{eqn:Morreynorm2}
\|f\|_{M^{p,s}}=\sup_{I_{\z,\e}}\left(\e^{sp-n}\int_{I_{\z,\e}}|f(\eta)|^pd\sigma(\eta)\right)^{1/p}.
\end{equation}

In order to have a well-defined problem, the multiplicative operators $f\rightarrow g_kf$ must map the corresponding  space to itself, that is, the functions $g_k$ must be pointwise multipliers of the corresponding spaces. 
We will prove that for the weighted Hardy spaces $H^p(\theta)$ and for the Morrey spaces $HM^{p,s}$, $0<s<n/p$, the space of pointwise multipliers coincide with $H^\infty$. Therefore, in the hypothesis of the corona problems that we will consider, we will  assume that  $g_k\in H^\infty$ for any $k$. 
 
Moreover, we will prove that if $\M_g$ is surjective, then the functions $g_k$ must satisfy condition  \eqref{eqn:coronacond}.

The main object of this paper consist to prove that this condition is also sufficient. To be precise, we will  prove  the following theorem.

\begin{thm} \label{thm:HpMorrey} Let $1< p<\infty$ and $0<s<n/p.$ Let $g_1,\ldots, g_m\in H^\infty$. Then, the following assertions are equivalent:

\begin{enumerate}
\item \label{item:HM1} The functions $g_k$, $k=1,\ldots,m$ satisfy  $\inf\{|g(z)|:\,z\in \B\}>0.$
\item \label{item:HM2} $\M_g$ maps $H^p(\theta)\times \cdots\times H^p(\theta)$ onto $H^p(\theta)$ for any $1<p<\infty$ and any $\theta\in\A_p$.
\item \label{item:HM3} $\M_g$ maps $H^p(\theta)\times \cdots\times H^p(\theta)$ onto $H^p(\theta)$ for some $1<p<\infty$ and some $\theta\in\A_p$.
\item \label{item:HM4} $\M_g$ maps $HM^{p,s}\times\cdots\times  HM^{p,s}$ onto $HM^{p,s}$ for any $1<p<\infty$ and any $0<s<n/p$.
\item \label{item:HM5} $\M_g$ maps $HM^{p,s}\times\cdots\times  HM^{p,s}$ onto $HM^{p,s}$ for some  $1<p<\infty$ and some  $0<s<n/p$.
\end{enumerate}

Moreover, there exists a linear operator $\T_g$ such that $\M_g(\T_g(f))=f$ for all the functions $f$ in one of the above spaces.
\end{thm}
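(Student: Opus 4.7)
The plan is to close the chain of implications as (ii)$\Rightarrow$(iii)$\Rightarrow$(i)$\Rightarrow$(ii) and (iv)$\Rightarrow$(v)$\Rightarrow$(i)$\Rightarrow$(iv). The implications (ii)$\Rightarrow$(iii) and (iv)$\Rightarrow$(v) are trivial. The necessity direction (iii)$\Rightarrow$(i), and identically (v)$\Rightarrow$(i), was already sketched in the introduction: point evaluations are continuous on $H^p(\theta)$ and on $HM^{p,s}$, so if $\inf_{\B}|g|=0$, choosing $z_k\in\B$ with $g(z_k)\to 0$ and evaluating the identity $\sum g_j f_j=1$ (for any preimage of the constant $1$) at $z_k$ contradicts the boundedness of the solution map.

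For (i)$\Rightarrow$(ii), the plan is exactly the one announced in the introduction: first establish the Hilbert case $H^2(\theta)$ with $\theta\in\A_2$, and then extrapolate. In the Hilbert case I would run the Koszul/Wolff scheme. Given $f\in H^2(\theta)$, set $\phi_j=f\bar g_j/|g|^2$, so that $\sum g_j\phi_j=f$ holds pointwise; since the $\phi_j$ are only smooth, correct them to $f_j=\phi_j+\sum_k g_k u_{j,k}$, where the $u_{j,k}$ are obtained from a linear solution of
\begin{equation*}
\dbar u_{j,k}=\phi_k\dbar\phi_j-\phi_j\dbar\phi_k.
\end{equation*}
Antisymmetry in $(j,k)$ forces $\sum g_j f_j=f$ automatically, and the linearity of every step in $f$ yields the operator $\T_g$ claimed in the theorem. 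Rubio de Francia's extrapolation theorem then transfers the $H^2(\theta)$ statement, once it is known for every $\theta\in\A_2$, to $H^p(\theta)$ for every $1<p<\infty$ and every $\theta\in\A_p$.

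The main obstacle is the $\dbar$-solution itself: one must construct a linear operator that inverts $\dbar$ on the $(0,1)$-forms appearing on the right-hand side above (products of bounded holomorphic data with functions in $H^2(\theta)$), with values in $H^2(\theta)$ for every $\theta\in\A_2$. I would define such an operator via an explicit integral kernel of Berndtsson-Andersson type on $\B$ and reduce the required bound to a weighted Carleson-embedding inequality on Kor\'anyi tents on $\bB$; the $\A_2$ hypothesis on $\theta$, equivalently the $L^2(\theta)$-boundedness of the maximal and area functions, is precisely what closes that estimate.

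For (i)$\Rightarrow$(iv) I would apply the same $\dbar$-solution operator but now measure sizes in the Morrey norm \eqref{eqn:Morreynorm2}. The crucial point is that Morrey membership admits an equivalent Carleson-type description that brings the estimate back to a weighted $H^2$ bound of exactly the form already proved for $H^2(\theta)$; once the weighted $\dbar$-estimate is in hand, only a routine passage through this characterization, together with a check that the implicit weight belongs to the admissible class, is needed. This yields (iv) and closes both cycles, completing the proof.
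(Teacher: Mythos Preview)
Your overall architecture matches the paper's exactly: prove the $H^2(\theta)$ case for every $\theta\in\A_2$ via the Koszul complex and explicit $\bar\partial$-solving kernels, then invoke Rubio de Francia extrapolation for (ii); for (iv) the paper uses an extrapolation theorem of Arai--Mizuhara (from uniform $\A_1$-weighted $L^p$ bounds to Morrey bounds), which is the precise form of the ``Carleson-type description'' you gesture at.

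There are two genuine gaps. First, your necessity argument fails: point evaluations on $H^p(\theta)$ and $HM^{p,s}$ are continuous, but with constants that blow up at $\partial\B$. Evaluating $\sum g_jf_j=1$ at $z_k$ only gives $1\le |g(z_k)|\,C(z_k)\max_j\|f_j\|$, and since $C(z_k)\to\infty$ there is no contradiction. The paper instead tests against $f_z(w)=(1-w\bar z)^{-N}$ and proves matching estimates $\|f_z\|_{H^p(\theta)}^p\lesssim\theta(I_z)(1-|z|^2)^{-Np}$ and $|h(z)|\lesssim(\theta'(I_z))^{1/p'}(1-|z|^2)^{-n}\|h\|_{H^p(\theta)}$; the $\A_p$ condition makes these cancel, yielding $1\lesssim|g(z)|$. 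The Morrey case is analogous via $|h(z)|\lesssim(1-|z|^2)^{-s}\|h\|_{M^{p,s}}$. Second, the Koszul correction you write down is only the first level: the forms $\phi_k\bar\partial\phi_j-\phi_j\bar\partial\phi_k$ are $\bar\partial$-closed only when $m=2$ (or $n=1$), since $\bar\partial(\phi_k\bar\partial\phi_j-\phi_j\bar\partial\phi_k)=2\,\bar\partial\phi_k\wedge\bar\partial\phi_j$. For $m\ge3$ the paper iterates through $(0,q)$-forms with kernels $\K_q^N$ and proves a separate pointwise estimate (Proposition~\ref{prop:estcoef}) bounding the higher compositions $(\delta_{g^*}\K^N)^k(fG\sqcap(\bar\partial G)^k)$, $k\ge2$, by a Poisson integral against the Carleson data $(1-|w|^2)|\partial g|^2+|\partial_T g|^2$; your sketch does not cover this.
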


Of course this theorem has interest only for $n>1$, because if $n=1$ the classical corona theorem in $H^\infty$ implies these results. Therefore, we will assume from now on that $n>1$.

We will finish the introduction giving a brief sketch of the proof of Theorem \ref{thm:HpMorrey} and the distribution of the parts of its proof in the different sections of the paper.

As we have already mentioned, it is particularly interesting that the proof of the key point of the theorem, \eqref{item:HM1} implies \eqref{item:HM2}, will be deduced from the proof of the corona problem for $H^2(\theta)$ for any weight $\theta\in \A_2$. In this case $H^2(\theta)$ coincides with a weighted  Besov  space which makes some of the computations easier to deal with. For this reason it is convenient to add to the list 
 of assertions in Theorem \ref{thm:HpMorrey} the following one 
\begin{enumerate}
	\item[(vi)]\label{item:HM6} $\M_g$ maps $H^2(\theta)\times \ldots\times H^2(\theta)$ onto $H^2(\theta)$ for any $\theta\in\A_2$,
\end{enumerate}
the scheme of the proof of the corona theorem for the case of weighted Hardy spaces will be the following:
\begin{center}
\eqref{item:HM2} $\Rightarrow$ \eqref{item:HM3} $\Rightarrow$ \eqref{item:HM1} $\Rightarrow$ (vi) $\Rightarrow $ \eqref{item:HM2}.
\end{center}

Clearly \eqref{item:HM2} $\Rightarrow$ \eqref{item:HM3}. The implication \eqref{item:HM3} $\Rightarrow$ \eqref{item:HM1}, which states that the necessity of the condition \eqref{eqn:coronacond}, is proved in Section \ref{sect:necessary}. The proof of  \eqref{item:HM1} $\Rightarrow$ (vi) is quite technical. In order to make the paper more readable, we prove first this result for the particular case of two generators in Section \ref{sect:corona2}, and next in Section \ref{sect:coronam} we prove the general case. For the case of two generators we will use and minimal solutions of the $\bar\p\p-$equation and Wolff type techniques that allow to estimate the solutions of the corona problem using Carleson  measures for $H^2(\theta)$. 
 
For the proof of the general case, we will consider as usual the Koszul complex with estimates of the involved operators which are suitable for the study of the required continuities. 

Finally, in Section 7 we prove that  (vi) $\Rightarrow $ \eqref{item:HM2}. This result will be a consequence of an extrapolation theorem due to J.L. Rubio de Francia.

The scheme of the proof of the Morrey case is similar and we will show in this case that

\begin{center}
\eqref{item:HM4} $\Rightarrow$ \eqref{item:HM5} $\Rightarrow$ \eqref{item:HM1} $\Leftrightarrow$ \eqref{item:HM2} $\Rightarrow $ \eqref{item:HM4}.
\end{center}

The first implication is obvious, and the proof of the second will be given in Section \ref{sect:necessary}. 
The proof of  \eqref{item:HM2} $\Rightarrow $ \eqref{item:HM4}, given in Section 7, follows from a theorem proved in  \cite{Ar-Mi}.

\section{Preliminaries} \label{sect:prelim}

\subsection{Notations}
In this subsection we  include most of the definitions of operators, spaces of functions and measures that we will use throughout the paper and that have not already been introduced.

As usual, we will adopt the  convention
of using the same letter for various absolute constants whose
values may change in each occurrence, and we will write $A\lesssim
B$ if there exists an absolute constant $M$ such that $A\leq MB$.
 We will say that two quantities $A$ and $B$ are equivalent if both
$A\lesssim B$ and $B\lesssim A$, and, in that case, we will write
$A\approx B$.

\subsubsection{Sets:} For $\z\in \bB$ and $r>0$, let $I_{\z,r}=\{\eta\in \bB: |1-\eta\bar\z |<r\}$. 
When $\z=z/|z|$ and $r=(1-|z|^2)$, we write    $I_z$ instead  of $I_{\z,r}$ .
If $\zeta\in\bB$, and $\alpha>1$, the admissible region is defined by $\Gamma_{\zeta,\alpha}=\{ z\in\B\,;\, |1-z\overline{\zeta}|<\alpha(1-|z|^2)\}$, and if $A\subset \bB$, $T_\alpha(A)=(\cup_{\zeta\notin A} \Gamma_\alpha(\zeta))^c$ is the tent over $A$. When $\alpha=1$, we will write $\Gamma_\zeta=
\Gamma_{\zeta,1}$, and $T(A)=T_1(A)$. If $\zeta\in \bB$, and $r>0$, we will write $\hat I_{\z,r}=T(I_{\zeta,r})$ and if $\z=z/|z|$ and $r=(1-|z|^2)$, we write  $\hat I_z=I_{\z,r}$. 

We denote by $|A|$ the Lebesgue measure of $A$. 
\subsubsection{Differential operators:}
For $1\le j\le n$, let $D_j=\dfrac{\p}{\p z_j}$. 
If $1\le i<j\le n$, let $D_{i,j}$ be the complex-tangential differential operator defined by $D_{i,j}=\bar z_jD_i - \bar z_i D_j$. 
The tangential operators $D_{i,j}$ appear when one computes the coefficients of the form 
\begin{align*}
\p\varphi(z)\land\p|z|^2&=\left(\sum_{i=1}^n D_i\varphi(z) d  z_i \right)\land\left(\sum_{j=1}^n \bar z_jd z_j\right)\\
&=\sum_{1\le i<j\le n} D_{i,j} \varphi(z) d z_i\land dz_j.
\end{align*}

We will consider the usual pointwise norm of the forms 
\begin{align*}|\p \varphi(z)|&=\sum_{j=1}^n|D_j\varphi(z)|,\quad\text{and}\\ |\p_T\varphi(z)|&=|\p\varphi(z)\land\p|z|^2|= \sum_{1\le i<j\le n}| D_{i,j} \varphi(z)|.
\end{align*}

Let $\Ra$ be the radial derivative $\Ra=\sum_{j=1}^n z_j D_j$. For $l> 0$ and $k$ a positive integer, we define 
\begin{equation}\label{eqn:Rlk} 
\Ra_{l}^k=\dfrac{\Gamma(l)}{\Gamma(l+k)} ((l+k-1)\mathcal{I}+\mathcal{R})\ldots 
(l\mathcal{I}+\mathcal{R}),
\end{equation}
where $\mathcal{I}$ denotes the identity operator.

\subsubsection{Integral operators:}
We will denote by $\mathcal{C}$ the Cauchy projection and by $\mathbb{P}$ the Poisson-Sz\"ego projection, that is 
$$\mathcal{C}(\varphi)(z)=\int_{\bB}\frac{\varphi(\z)}{(1-z\bar\z )^n} d\sigma(\z)\,,\qquad \mathbb{P}(\varphi)(z)=\int_{\bB}\varphi(\z)\frac{(1-|z|^2)^n}{|1-z\bar\z |^{2n}} d\sigma(\z).$$ 

In the forthcoming sections, we will use the following kernels and their corresponding integral operators. 
\begin{defn}\label{def:operators}
Let  $N,M, L$ be real numbers satisfying $N>0$ and $L<n$. For $z,w\in \B$, let 
$$\L^{N}_{M,L}(w,z)=\frac{(1-|w|^2)^{N-1}}{|1-z\bar w |^{M}\phi(w,z)^L},$$
where $\phi(w,z)=|1-z\bar w|^2-(1-|w|^2)(1-|z|^2)$.
 
$\L^{N}_{M,L}$ will also denote the corresponding integral operator given by 
$$\psi(z)\rightarrow \L^{N}_{M,L}(\psi)(z)=\int_{\B} \psi(w)\L^{N}_{M,L}(w,z)d\nu(w).$$ 
\end {defn}

The proof of the following result can be found  in Lemma I.1 in \cite{Char}.

\begin {lem} \label{lem:estK} Let  $N,M, L$ be real numbers satisfying $N>0$ and $L<n$. If $n+N-M-2L\neq 0$, then 
$$\int_{\B} \L^{N}_{M,L}(w,z)d\nu(w)\lesssim 1+(1-|z|^2)^{n+N-M-2L},\quad z\in \B.$$
\end {lem}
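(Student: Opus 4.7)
The plan is to reduce the estimate to the classical Forelli--Rudin inequality
\begin{equation*}
\int_\B \frac{(1-|w|^2)^{c}}{|1-z\bar w|^{b}}\,d\nu(w)\lesssim 1+(1-|z|^2)^{n+1+c-b},\qquad c>-1,\ n+1+c-b\ne 0,
\end{equation*}
by splitting the ball according to whether the isotropic factor $\phi^{-L}$ is comparable to $|1-z\bar w|^{-2L}$ or not (the non-vanishing side condition above corresponds exactly to the hypothesis $n+N-M-2L\ne 0$). The first step is to record the elementary identity $\phi(w,z)=|z-w|^2$, obtained by directly expanding the definition. This gives $0\le\phi(w,z)\le|1-z\bar w|^2$ on $\B\times\B$, and clarifies the role of the hypothesis $L<n$: it is precisely the condition for local integrability of $|z-w|^{-2L}$ against Lebesgue measure on $\C^n\cong\R^{2n}$.

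When $L\le 0$, $\phi\le|1-z\bar w|^2$ yields $\phi^{-L}\le|1-z\bar w|^{-2L}$, so the kernel is dominated by $(1-|w|^2)^{N-1}|1-z\bar w|^{-(M+2L)}$ and Forelli--Rudin (with $c=N-1>-1$, $b=M+2L$) gives the bound. For $L>0$, decompose $\B=A\cup B$ with $A=\{w:\phi(w,z)\ge\tfrac12|1-z\bar w|^2\}$ and $B=\B\setminus A$. On $A$ the pointwise estimate $\phi^{-L}\asymp|1-z\bar w|^{-2L}$ again reduces the problem to Forelli--Rudin. On $B$ the defining inequality $(1-|z|^2)(1-|w|^2)\asymp|1-z\bar w|^2$, combined with $|1-z\bar w|\ge\max(1-|z|,1-|w|)$, forces $1-|w|^2\asymp 1-|z|^2$ and $|1-z\bar w|\lesssim 1-|z|^2$ throughout $B$, so that
\begin{equation*}
\L^N_{M,L}(w,z)\asymp(1-|z|^2)^{N-1-M}|z-w|^{-2L}\quad\text{on }B.
\end{equation*}

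It remains to estimate $\int_B|z-w|^{-2L}\,d\nu(w)$. After a unitary rotation placing $z=(r,0,\ldots,0)$ and writing $w=(w_1,w')$ with $w_1\in\D$ and $w'\in\C^{n-1}$, one has $|z-w|^2=|w_1-r|^2+|w'|^2$, and $B$ sits inside the anisotropic box $\{|w_1-r|\lesssim 1-r^2,\ |w'|\lesssim\sqrt{1-r^2}\}$. Performing the $w'$-integration first in polar coordinates and then the $w_1$-integration, and combining with the prefactor $(1-r^2)^{N-1-M}$, produces a $B$-contribution dominated by $(1-|z|^2)^{n+N-M-2L}$ for $|z|$ close to the boundary. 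I expect the main obstacle to be the anisotropic $w'$-integration, whose dominant behaviour bifurcates at $L=n-1$: for $0<L<n-1$ the integral is boundary-dominated at $|w'|\asymp\sqrt{1-r^2}$, while for $n-1\le L<n$ it is dominated by the singularity at $w'=0$ (where $L<n$ is exactly what ensures convergence). In each regime the resulting exponent of $1-|z|^2$ must be verified to be at least $n+N-M-2L$, and this is where the elementary but technical bookkeeping is concentrated.
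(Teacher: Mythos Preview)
The paper does not supply its own argument here; it simply cites Lemma~I.1 of \cite{Char}. (The standard device for such estimates, which the paper itself deploys later in the proof of Lemma~\ref{lem:estcomposK}, is the change of variables by the ball automorphism $\varphi_w$, using the identity $\phi(w,z)=|1-z\bar w|^2\,|\varphi_w(z)|^2$.)

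Your reduction to Forelli--Rudin in the case $L\le 0$ and on the set $A$ is correct. The problem is on $B$: the claimed ``elementary identity $\phi(w,z)=|z-w|^2$'' holds only for $n=1$, whereas the paper works throughout under the standing hypothesis $n>1$. The correct identity, which actually appears in the paper at \eqref{eqn:sectKN}, is
\[
\phi(w,z)=|(w-z)\bar w|^2+(1-|w|^2)\,|w-z|^2,
\]
and this gives only the one-sided inequality $\phi(w,z)\le|z-w|^2$. For $L>0$ it follows that $\phi(w,z)^{-L}\ge|z-w|^{-2L}$, so replacing $\phi^{-L}$ by $|z-w|^{-2L}$ produces a \emph{lower} bound on the kernel, not an upper bound. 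Your displayed relation $\L^N_{M,L}(w,z)\approx(1-|z|^2)^{N-1-M}|z-w|^{-2L}$ on $B$ is therefore not a valid majorant, and the anisotropic box integral that follows does not control the true $B$-contribution.

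The strategy is easily salvaged. On $B$ you have already shown $1-|w|^2\approx 1-|z|^2$, so the correct identity gives the usable lower bound $\phi(w,z)\gtrsim(1-|z|^2)\,|z-w|^2$, and hence
\[
\L^N_{M,L}(w,z)\lesssim (1-|z|^2)^{N-1-M-L}\,|z-w|^{-2L}\qquad\text{on }B.
\]
Carrying the additional factor $(1-|z|^2)^{-L}$ through your box computation then yields precisely the exponent $n+N-M-2L$. Your reading of the hypothesis $L<n$ survives as well: for each fixed interior $z$ one has $\phi(w,z)\approx|z-w|^2$ near $w=z$ (with constants that degenerate as $|z|\to 1$), so $L<n$ is still the local integrability threshold for $\phi^{-L}$.
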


\begin {defn} We define the type of  the kernel $\L^{N}_{M,L}$ by
$$type(\L^{N}_{M,L})=n+N-M-2L.$$
\end {defn}

\subsection{The Muckenhoupt class $\A_p$ on $\bB$:}

Given a non negative weight $\theta\in L^1(d\sigma)$ and $E$ a measurable set in $\bB$, let $\theta(E)=\int_E\theta d\sigma$. For $z=r\z$, $\z\in \bB$, $0<r<1$, we consider the average function on $\B$ associated to $\theta$ defined by $\Theta(z)=\dfrac{\theta(I_z)}{|I_z|}$, where $I_z=I_{r\z}=\{\eta\in \bB; |1-\eta\bar \z|<1-r^2\}$.

The Muckenhoupt class $\mathcal{A}_p$ on $\bB$, $1< p<\infty$, consists of the non-negative weights $\theta\in L^1(d\sigma)$  satisfying  

\begin{equation}\label{eqn:Ap}
\mathcal{A}_p(\theta)=\sup_{z\in \B}\left(\Theta(z)\right)^{1/p}
\left(\Theta'(z)\right)^{1/p'}<\infty
\end{equation}
where $\theta'=\theta^{-p'/p}$ and $\Theta'(z)=\dfrac{\theta'(I_z)}{|I_z|}$.
Observe that  $\theta\in \mathcal{A}_p$, if and only if, $\theta'\in \mathcal{A}_{p'}$.

If $p=1$, the class  $\mathcal{A}_1$ on $\bB$ is the set of non-negative weights $\theta\in L^1(d\sigma)$  satisfying  
\begin{equation}\label{eqn:A1}
\mathcal{A}_1(\theta)=\left\|\frac{M_{H-L}(\theta)(\z)}{\theta(\z)}\right\|_{L^\infty}<\infty,
\end{equation}
where $M_{H-L}(\theta)(\z)=\sup_r \Theta(r\z)$ is the Hardy-Littlewood maximal function of $\theta$ in $\z$.
\begin{prop} \label{prop:doubling} 
\begin{enumerate}
	\item If $1<p$, then  $\mathcal{A}_1\subset  \mathcal{A}_p$.
	\item If $1<p<\infty$ and $\varphi\in \A_p$, then there exist $1\le q<p$ such that $\varphi\in\A_q$.
	\item For any $\theta\in \mathcal{A}_p$, the measure $\theta d\sigma$ is a doubling measure. In fact, there exist $C>0$ and $0<\lambda< np$ such that for any $\zeta\in \bB$ and any $r>0$, $\theta(I_{\zeta,2r})\leq C 2^\lambda\theta(I_{\zeta,r})$. 
\end{enumerate}
\end{prop}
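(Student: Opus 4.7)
The plan is to recognize these as the standard self-improvement and doubling properties of Muckenhoupt weights, transferred to the non-isotropic setting on $\bB$. Since the quasi-balls $I_{\zeta,r}$ give $(\bB, d\sigma, |1-\bar\zeta\eta|)$ the structure of a space of homogeneous type with $|I_{\zeta,r}|\approx r^n$, the classical Euclidean arguments will transfer verbatim; I will sketch the mechanics and flag the one genuinely substantial ingredient.

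For (i), I would fix $\theta\in\A_1$. The definition \eqref{eqn:A1} says $M_{H-L}(\theta)(\eta)\le \A_1(\theta)\theta(\eta)$ a.e., so $\theta(\eta)\ge \A_1(\theta)^{-1}\Theta(z)$ for a.e.\ $\eta\in I_z$. Raising to the power $p'/p$ and integrating over $I_z$ gives
\[
\Theta'(z)\le \A_1(\theta)^{p'/p}\,\Theta(z)^{-p'/p},
\]
which yields $\Theta(z)^{1/p}\Theta'(z)^{1/p'}\le \A_1(\theta)^{1/p}$ and hence $\theta\in\A_p$.

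Assertion (ii), the openness of $\A_p$, is the main obstacle. I would prove it via the reverse H\"older inequality: there exist $\delta>0$ and $C>0$ such that
\[
\left(\frac{1}{|I|}\int_I \theta^{1+\delta}\,d\sigma\right)^{1/(1+\delta)}\le C\,\frac{\theta(I)}{|I|}
\]
for every quasi-ball $I$. This is the familiar Calder\'on-Zygmund stopping-time argument, which adapts without change to the homogeneous-type structure of $\bB$. Once the reverse H\"older inequality is in hand, a short H\"older computation applied to $\Theta'$ upgrades $\theta\in\A_p$ to $\theta\in\A_{p-\e}$ for a small $\e>0$. Since all of this is by now classical (Coifman-Fefferman in the Euclidean case, and Calder\'on, Mac\'\i as-Segovia in spaces of homogeneous type), I would cite the appropriate reference rather than reproduce the argument.

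For (iii), I would apply H\"older's inequality to $\chi_E=\theta^{1/p}\theta^{-1/p}\chi_E$ to obtain, for every measurable $E\subset I$,
\[
|E|\le \theta(E)^{1/p}\,\theta'(I)^{1/p'}.
\]
Dividing by $|I|$ and invoking \eqref{eqn:Ap} would give
\[
\left(\frac{|E|}{|I|}\right)^p\le \A_p(\theta)^p\,\frac{\theta(E)}{\theta(I)}.
\]
Taking $E=I_{\zeta,r}\subset I=I_{\zeta,2r}$ together with $|I_{\zeta,2r}|/|I_{\zeta,r}|\approx 2^n$ would already yield doubling with exponent $\lambda=np$. To secure the strict bound $\lambda<np$, I would appeal to (ii) to find some $q<p$ with $\theta\in\A_q$, and rerun the estimate with $q$ in place of $p$.
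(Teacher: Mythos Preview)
Your proposal is correct and follows the same approach as the paper. The paper simply cites Stein's \emph{Harmonic Analysis} for (i), (ii), and the doubling estimate in (iii) with $\lambda\le np$, and then, exactly as you do, invokes (ii) to pass to some $q<p$ and obtain the strict exponent $\lambda=nq<np$; your version just makes the standard arguments behind those citations explicit.
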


The proof of (i) can be found in p. 197 \cite{St} and (ii) in p. 202 \cite{St}. The estimate in (iii) with $\lambda\le np$ is proved in p. 196 \cite{St}. This estimate together (ii) gives $\lambda<np$.

\subsection{Holomorphic weighted Hardy spaces}

Let us start recalling some well known facts on the weighted Hardy-Sobolev spaces $H^p(\theta)$, $\theta\in \mathcal{A}_p$.

\begin{prop}\label{prop:Approperties}
Let $\theta$ is in $\A_p$ and $1<p<+\infty$. 
\begin{enumerate}
	\item \label{item:Hp1} If  $M_\alpha(f)(\z)$ is the maximal admissible function $M_\alpha(f)(\z)=\sup\{|f(z)|:z\in \Gamma_{\z,\alpha}\}$, then
$\|f\|_{H^p(\theta)}\approx \|M_\alpha(f)\|_{L^p(\theta)}.$
	\item There exist $1<p_1<p< p_2$ such that $H^{p_2}\subset H^p(\theta)\subset H^{p_1}$.
	\item If $1<p<\infty$ and $\theta\in \mathcal{A}_p$, then the Cauchy projection $\mathcal{C}$  maps $L^p(\theta)$ to $H^p(\theta)$.
	\item The dual of $H^p(\theta)$ can be identified with $H^{p'}(\theta')$ with the pairing given by   
\begin{equation}\label{eqn:pairingS} \langle f, g\rangle_S=\lim_{r\rightarrow 1}\int_S f_r \bar g_r d\sigma,
\end{equation} 
where $f_r(\z)=f(r\z)$.
\end{enumerate}
\end{prop}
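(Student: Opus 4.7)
The plan is to treat these four assertions as standard consequences of weighted singular integral theory on the non-isotropic sphere, taking the Cauchy projection bound (iii) as the key input from which (i) and (iv) readily follow, while (ii) is a direct consequence of Proposition \ref{prop:doubling}. For (iii), the boundary $\bB$ equipped with the quasi-metric $d(\z,\eta)=|1-\z\bar\eta|$ is a space of homogeneous type whose balls are exactly the $I_{\z,\e}$ used to define $\A_p$. The Cauchy kernel $(1-z\bar\z )^{-n}$ extends to a non-isotropic Calder\'on-Zygmund operator on $\bB$, whose $L^2$-boundedness is immediate from the orthogonal decomposition of $L^2(d\sigma)$ into holomorphic and anti-holomorphic subspaces. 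The weighted Calder\'on-Zygmund theory on homogeneous spaces then yields $L^p(\theta)$-boundedness of $\mathcal{C}$ for every $\theta\in\A_p$, and since $\mathcal{C}$ maps into the holomorphic subspace, we obtain (iii).

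For (i), I would use the pointwise estimate $M_\alpha(f)(\z)\lesssim (M_{H-L}(|f^*|^q)(\z))^{1/q}$, valid for any $q>0$ with $f^*$ the admissible boundary values, which comes from the subharmonicity of $|f|^q$ combined with a Poisson-type estimate on Kor\'anyi balls. Picking $q<p$ small enough so that $\theta\in\A_{p/q}$ (possible by Proposition \ref{prop:doubling}(ii)) and invoking the weighted maximal inequality yield $\|M_\alpha f\|_{L^p(\theta)}\lesssim\|f\|_{H^p(\theta)}$; the reverse estimate is trivial. For (ii), the reverse H\"older inequality for $\A_p$ weights gives $\delta>0$ with $\theta\in L^{1+\delta}(d\sigma)$, so taking $p_2$ with $(p_2/p)'=1+\delta$ and applying H\"older to $\int|f|^p\theta\,d\sigma$ shows $H^{p_2}\subset H^p(\theta)$; the inclusion $H^p(\theta)\subset H^{p_1}$ follows by the same argument applied to the dual weight $\theta'=\theta^{-p'/p}\in\A_{p'}$, whose reverse H\"older exponent controls a negative power of $\theta$.

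For (iv), H\"older's inequality together with the identity $\theta^{1/p}\cdot\theta^{-1/p}=1$ and the relation $\theta'=\theta^{-p'/p}$ shows that $\langle\cdot,g\rangle_S$ defines a bounded functional on $H^p(\theta)$ for each $g\in H^{p'}(\theta')$. Conversely, any bounded functional on $H^p(\theta)\subset L^p(\theta)$ extends by Hahn-Banach to $L^p(\theta)$, where it is represented by some $h\in L^{p'}(\theta')$; applying the Cauchy projection (bounded on $L^{p'}(\theta')$ by (iii), since $\theta'\in\A_{p'}$) produces a holomorphic representative $g=\mathcal{C}(h)\in H^{p'}(\theta')$ that induces the same functional on $H^p(\theta)$, because $\mathcal{C}$ is self-adjoint with respect to the pairing $\langle\cdot,\cdot\rangle_S$. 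The main obstacle is (iii): verifying the size and smoothness estimates for the Cauchy kernel in the non-isotropic metric so that the general weighted theory of Calder\'on-Zygmund operators on spaces of homogeneous type applies. Once this is in place, the remaining three items follow by essentially routine arguments.
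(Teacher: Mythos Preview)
The paper does not actually prove this proposition: it merely records that part \eqref{item:Hp1} ``can be found in Section 5 in \cite{Lu}'' and treats the remaining items as well-known facts about weighted Hardy spaces, citing nothing further. Your proposal therefore goes well beyond what the paper supplies, giving a genuine sketch of why each part holds.

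Your outline is along correct and standard lines. A few remarks. For (iii) your observation that the Cauchy projection coincides with the Szeg\H{o} projection on the ball, hence is the orthogonal projection onto $H^2(\bB)$, is exactly what makes the $L^2$ endpoint trivial; the non-isotropic Calder\'on--Zygmund estimates for the Szeg\H{o} kernel are well documented (e.g.\ Kor\'anyi--V\'agi, or Chapter 12 of \cite{St}), so the weighted theory on homogeneous spaces applies as you say. For (i) your route via $M_\alpha(f)\lesssim (M_{H-L}(|f^*|^q))^{1/q}$ and the open property of $\A_p$ is the standard substitute for Luecking's argument; note that you also need the existence of admissible boundary values $f^*$, which follows from (ii) and the unweighted theory. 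For (iv) the only subtlety is that the Hahn--Banach extension produces $h\in L^{p'}(\theta)$ representing the functional via $f\mapsto\int f\bar h\,\theta\,d\sigma$, not $\int f\bar h\,d\sigma$; one then sets $g=\mathcal{C}(h\theta)$ and uses that $h\theta\in L^{p'}(\theta')$ because $|h|^{p'}\theta^{p'}\theta'=|h|^{p'}\theta$. With that bookkeeping the argument goes through. In short, your proposal is a legitimate proof where the paper gives only a citation.
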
 

The proof of part \eqref{item:Hp1} can be found in Section 5 in \cite{Lu}. 

The following result (see \cite{Ca-Or2}) gives that the weighted space $H^2(\theta)$ can be considered as a weighted Besov space.

\begin{prop} \label{prop:charhp1} Let $\theta\in \A_2$ and let $k$ be any positive integer. The following assertions are equivalent
\begin{enumerate}
	\item $f\in H^2(\theta)$.
	\item $\displaystyle{(1-|z|^2)^{k-1/2}\Theta(z)^{1/2}|(\I+\Ra)^k f(z)|\in L^2(\B)}$.
\end{enumerate}
\end{prop}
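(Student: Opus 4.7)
The plan is to identify the volume integral on the right with a weighted area (Littlewood--Paley) functional and then invoke the classical area-integral characterization of $H^2(\theta)$ for $\theta\in\A_2$. Concretely, define
$$A_kf(\z)=\Bigl(\int_{\Gamma_\z}(1-|z|^2)^{2k-n-1}|(\I+\Ra)^k f(z)|^2 d\nu(z)\Bigr)^{1/2}.$$
The backbone of the argument is the equivalence
$$\|f\|_{H^2(\theta)}\approx \|A_k f\|_{L^2(\theta)},$$
which is a weighted Littlewood--Paley estimate. To establish it I would follow the standard route: for one direction, use Green's theorem (applied to $|(\I+\Ra)^{k-1}f|^2$ against the Green kernel) to pass from the area integral to the nontangential maximal function $M_\alpha f$; for the converse, use the good-$\lambda$ inequality between $A_kf$ and $M_\alpha f$, which, thanks to Proposition~\ref{prop:doubling} on the doubling character of $\theta$, transfers from $L^2(d\sigma)$ to $L^2(\theta)$. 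Then Proposition~\ref{prop:Approperties}\eqref{item:Hp1} closes the loop.

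Once this equivalence is in hand, the passage from $\|A_kf\|_{L^2(\theta)}$ to the volume integral of part (ii) is a clean Fubini computation. Swapping the order of integration,
$$\|A_k f\|_{L^2(\theta)}^2=\int_\B (1-|z|^2)^{2k-n-1}|(\I+\Ra)^k f(z)|^2\,\theta\bigl(\{\z\in\bB:z\in\Gamma_\z\}\bigr)d\nu(z).$$
For each $z\in\B$ the set $\{\z\in\bB:z\in\Gamma_\z\}$ is comparable (in the nonisotropic metric) to $I_z=I_{z/|z|,\,1-|z|^2}$, so by definition $\theta(\{\z:z\in\Gamma_\z\})\approx \theta(I_z)=|I_z|\,\Theta(z)\approx (1-|z|^2)^n\Theta(z)$. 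Substituting, the exponent $2k-n-1$ and the factor $(1-|z|^2)^n$ combine to give
$$\|A_kf\|_{L^2(\theta)}^2\approx \int_\B (1-|z|^2)^{2k-1}|(\I+\Ra)^kf(z)|^2\,\Theta(z)\,d\nu(z),$$
which is exactly the square of the quantity in (ii). Combined with the area characterization this yields the desired equivalence.

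A cosmetic subtlety is the aperture of the cones: $\Gamma_\z=\Gamma_{\z,1}$ may need to be replaced by some $\Gamma_{\z,\alpha}$ with $\alpha>1$ so that the set $\{\z:z\in\Gamma_{\z,\alpha}\}$ is genuinely comparable to $I_z$ and not degenerate near the boundary of the cone. The aperture invariance of the area function in the $L^2(\theta)$ norm, again a consequence of the $\A_2$ condition via good-$\lambda$ inequalities, removes this issue. A second small point is to check that $(\I+\Ra)^k$ is the ``right'' derivative: any standard integral representation (e.g.\ using $\Ra_l^k$ from \eqref{eqn:Rlk}) allows one to recover $f$ from $(\I+\Ra)^kf$ modulo polynomials of degree $<k$, and to pass between different choices of derivative, so (i) and (ii) are insensitive to the precise form.

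The main obstacle, as indicated, is the weighted area-integral characterization $\|f\|_{H^2(\theta)}\approx\|A_kf\|_{L^2(\theta)}$ itself. In the unweighted setting this is classical, but for $\A_2$ weights on $\bB$ one must lean on the doubling property of $\theta$ and the boundedness of the nonisotropic Hardy--Littlewood maximal operator on $L^2(\theta)$. Since this is precisely what is established in \cite{Ca-Or2} (cited in the paper), the proof reduces essentially to assembling the two steps above.
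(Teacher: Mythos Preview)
Your outline is sound, and there is nothing to compare it against: the paper does not prove this proposition but simply cites \cite{Ca-Or2} for it. What you have written is precisely the standard argument behind that reference --- the weighted Littlewood--Paley/area-integral characterization (via good-$\lambda$ inequalities and the doubling property of $\A_2$ weights) followed by the Fubini computation identifying $\|A_kf\|_{L^2(\theta)}^2$ with the weighted volume integral through $\theta(\{\z:z\in\Gamma_\z\})\approx(1-|z|^2)^n\Theta(z)$. So your proposal is correct and, in effect, supplies the argument that the paper outsources.
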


Of course, in the last expression we can replace the oprator $(\I+\Ra)^k$ by an operator $\Ra_l^k$ defined in \eqref{eqn:Rlk}.

 Observe that Lemma 3.6 in \cite{Ah-Br}, gives that if $\alpha<\beta$, and $h$ is a holomorphic function on $\B$, there exists $C>0$ such that for any $\zeta\in\bB$,
$$\int_{\Gamma_{\zeta,\alpha}} |\p_T h(z)|^2 (1-|z|^2)^{-n} d\nu(z)\lesssim  \int_{\Gamma_{\zeta,\beta}} |\Ra h(z)|^2 (1-|z|^2)^{1-n} d\nu(z).$$
Consequently, multiplying by $\theta(\zeta)$ and integrating we obtain
\begin{equation}\label{eqn:admissible}
\int_{\B}|D_{i,\,j}h(z)|^2\Theta(z)d\nu(z)\lesssim  \int_{\B}|\Ra  h(z)|^2(1-|z|^2)\Theta(z)d\nu(z)
\end{equation}
In particular, we have from this observation and Proposition \ref{prop:charhp1} that

\begin{prop} \label{prop:charhp} If $f\in H^2(\theta)$, then 
$$\left\|\Theta(z)^{1/2}\left(|\p_T f(z)|+ (1-|z|^2)^{1/2}|\p f(z)|\right)\right\|_{L^2(\B)}\lesssim \|f\|_{ H^2(\theta)}.$$
\end{prop}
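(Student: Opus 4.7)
The plan is to bound the two pieces $\Theta^{1/2}|\p_T f|$ and $\Theta^{1/2}(1-|z|^2)^{1/2}|\p f|$ in $L^2(\B)$ separately, with the radial-derivative integral $\int_\B (1-|z|^2)\Theta(z)|\Ra f(z)|^2\, d\nu(z)$ playing the role of the pivot. This integral is $\lesssim \|f\|_{H^2(\theta)}^2$ by Proposition \ref{prop:charhp1} applied with $k=1$, using the remark that $(\I+\Ra)^k$ may be replaced by the operator $\Ra_l^k$ of \eqref{eqn:Rlk}, so the characterization reduces, up to lower-order terms controlled by $f$ itself, to one formulated with $\Ra f$ alone.

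For the tangential piece, the estimate \eqref{eqn:admissible} applied to $h=f$ gives directly
\begin{equation*}
\int_\B |D_{i,j}f(z)|^2 \Theta(z)\,d\nu(z) \lesssim \int_\B |\Ra f(z)|^2 (1-|z|^2)\Theta(z)\,d\nu(z) \lesssim \|f\|_{H^2(\theta)}^2,
\end{equation*}
so that $\|\Theta^{1/2}|\p_T f|\|_{L^2(\B)}\lesssim \|f\|_{H^2(\theta)}$.

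For the full gradient, I would use the pointwise identity obtained by multiplying $D_{i,j}=\bar z_j D_i-\bar z_i D_j$ by $z_j$ and summing in $j$, namely
\begin{equation*}
|z|^2 D_i f(z) = \bar z_i \Ra f(z) + \sum_{j} z_j \widetilde{D}_{i,j} f(z),
\end{equation*}
where $\widetilde{D}_{i,j}$ denotes the anti-symmetric extension of $D_{i,j}$ to every index pair. On the region $\{|z|\ge 1/2\}$ this yields $|\p f(z)| \lesssim |\Ra f(z)| + |\p_T f(z)|$, and the integral of $(1-|z|^2)\Theta|\p f|^2$ over this set is then dominated by $\|f\|_{H^2(\theta)}^2$ via the two previous bounds. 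On the complementary region $\{|z|<1/2\}$ one observes that $\Theta$ is bounded (since $I_z$ covers a fixed portion of $\bB$ and $\theta\in L^1(d\sigma)$), while by the inclusion $H^2(\theta)\subset H^{p_1}$ from Proposition \ref{prop:Approperties}(ii) together with interior Cauchy estimates, $|\p f(z)|\lesssim \|f\|_{H^{p_1}}\lesssim \|f\|_{H^2(\theta)}$ uniformly, so this region contributes a trivially bounded amount.

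The only delicate point I anticipate is justifying the pivot step cleanly, since Proposition \ref{prop:charhp1} is stated with $(\I+\Ra)^k$ rather than $\Ra^k$. After using $|\Ra f|^2\lesssim |(\I+\Ra)f|^2+|f|^2$, the residual integral $\int(1-|z|^2)\Theta|f|^2\,d\nu$ must be absorbed into $\|f\|_{H^2(\theta)}^2$; this should follow from subharmonicity of $|f|^2$ together with the non-tangential maximal-function characterization of the norm stated in Proposition \ref{prop:Approperties}\eqref{item:Hp1}. Once that lemma is in hand, everything else is a routine combination of \eqref{eqn:admissible}, the linear-algebraic identity above, and the interior estimate near the origin.
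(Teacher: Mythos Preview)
Your proposal is correct and follows essentially the same approach as the paper: the paper simply states that the proposition follows ``from this observation and Proposition~\ref{prop:charhp1}'', where ``this observation'' is exactly the inequality~\eqref{eqn:admissible} you use for the tangential piece. The extra details you supply --- the pointwise identity $|z|^2 D_i f=\bar z_i\Ra f+\sum_j z_j\widetilde D_{i,j}f$ to pass from $\Ra f$ and $\p_T f$ to the full gradient away from the origin, the interior estimate on $\{|z|<1/2\}$, and the absorption of the residual term $\int(1-|z|^2)\Theta|f|^2\,d\nu$ via the maximal-function characterization --- are precisely the routine steps the paper leaves implicit.
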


\subsection{Holomorphic Morrey spaces $0<s<n/p$.}

The following embedding is a consequence of H\"older's iequality, and will be used in the forthcoming sections.

\begin{prop} \label{prop:embed:MHp} If $1<p<\infty$ and $0< s\le n/p$, then $H^{n/s}\subset HM^{p,s}\subset H^p$.
\end{prop}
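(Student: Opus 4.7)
The plan is to deduce both inclusions from elementary applications of Hölder's inequality together with the observation that, in the non-isotropic metric on $\bB$, the Korányi balls $I_{\z,\e}$ exhaust the sphere once $\e$ is sufficiently large. Neither direction requires any holomorphic input beyond the obvious fact that if a membership holds at the level of $L^q$-spaces on $\bB$, it restricts to the holomorphic subspaces.

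For the inclusion $H^{n/s}\subset HM^{p,s}$, the hypothesis $0<s\le n/p$ guarantees $p\le n/s$, so I would apply Hölder's inequality with exponents $n/(sp)$ and $n/(n-sp)$ on a single Korányi ball $I_{\z,\e}$:
$$\int_{I_{\z,\e}}|f(\eta)|^p\,d\sigma(\eta)\le \left(\int_{I_{\z,\e}}|f(\eta)|^{n/s}d\sigma(\eta)\right)^{sp/n}\sigma(I_{\z,\e})^{1-sp/n}.$$
Using $\sigma(I_{\z,\e})\approx \e^n$, the right-hand side is controlled by $C\,\e^{n-sp}\|f\|_{n/s}^p$. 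Multiplying through by $\e^{sp-n}$ and taking the supremum over all balls $I_{\z,\e}$ produces the desired bound $\|f\|_{M^{p,s}}\lesssim \|f\|_{n/s}$, which directly restricts to $H^{n/s}\subset HM^{p,s}$.

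For the inclusion $HM^{p,s}\subset H^p$, I would simply remark that for $\e$ large enough (say $\e$ greater than the diameter of $\bB$ in the non-isotropic quasimetric $|1-\bar\z\eta|$, for which $\e=2$ suffices) one has $I_{\z,\e}=\bB$ for every $\z$. Plugging this particular ball into the sup defining the Morrey norm in \eqref{eqn:Morreynorm2} gives
$$2^{sp-n}\int_{\bB}|f(\eta)|^p\,d\sigma(\eta)\le \|f\|_{M^{p,s}}^p,$$
so $\|f\|_p\lesssim \|f\|_{M^{p,s}}$, again restricting to the holomorphic subspaces to conclude.

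There is no genuine obstacle; the entire proposition is a one-line computation in each direction. The only conceptual content is that $M^{p,s}$ interpolates, in the scale of spaces determined by the Morrey condition, between $L^{n/s}$ (which contributes in balls of all scales via Hölder) and $L^p$ (which is recovered at the maximal scale $I_{\z,\e}=\bB$).
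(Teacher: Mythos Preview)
Your proposal is correct and matches the paper's own approach: the paper simply states that the proposition ``is a consequence of H\"older's inequality'' without supplying further details, and your argument for $H^{n/s}\subset HM^{p,s}$ is precisely the intended application of H\"older on each Kor\'anyi ball, while the inclusion $HM^{p,s}\subset H^p$ is the triviality you describe (take $\e=2$ so that $I_{\z,\e}=\bB$).
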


\section{Pointwise multipliers and Carleson measures} \label{sect:multcar}

In this section we will show that the space of pointwise multipliers of the holomorphic weighted Hardy spaces $H^p(\theta)$ and of the holomorphic Morrey spaces $M^{p,s}$ coincide with $H^\infty$. We will also give examples of Carleson measures for $H^p(\theta)$, which will play an important role in the proofs of the main theorems.

\subsection{Pointwise multipliers and Carleson measures on weighted Hardy spaces}

\begin{prop} A holomorphic function $g$ on $\B$ is a pointwise multiplier of $H^p(\theta)$ if and only if $g\in H^\infty$.
\end{prop}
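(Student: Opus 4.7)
The easy direction is immediate: if $g \in H^\infty$ then $\|gf\|_{H^p(\theta)} \le \|g\|_\infty \|f\|_{H^p(\theta)}$, since the boundary integrals are dominated pointwise. For the converse, by the closed graph theorem the multiplication operator $\M_g$ is bounded on $H^p(\theta)$, and the strategy is the standard reproducing-kernel testing argument: evaluate $\M_g$ on a family of functions concentrated near an arbitrary $z_0 \in \B$ and read off a uniform bound on $|g(z_0)|$.

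The first ingredient is a point-evaluation estimate
$$|h(z_0)| \lesssim \theta(I_{z_0})^{-1/p} \|h\|_{H^p(\theta)}, \qquad h \in H^p(\theta),$$
which follows from Proposition \ref{prop:Approperties}\eqref{item:Hp1}: for $\zeta$ in a Kor\'anyi ball comparable to $I_{z_0}$, the point $z_0$ lies in an admissible region $\Gamma_{\zeta,\alpha}$, so $|h(z_0)| \le M_\alpha h(\zeta)$, and $\theta$-averaging over that ball yields the estimate via doubling. The second ingredient is the test function $f_{z_0}(w) = (1-w\bar z_0)^{-N}$, with $N$ chosen so that $Np > \lambda$, where $\lambda < np$ is the doubling exponent of Proposition \ref{prop:doubling}(iii). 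A dyadic decomposition of $\bB$ into Kor\'anyi shells $I_{z_0,2^{k+1}(1-|z_0|^2)} \setminus I_{z_0,2^k(1-|z_0|^2)}$ combined with the doubling of $\theta\,d\sigma$ yields $\|f_{z_0}\|_{H^p(\theta)}^p \lesssim (1-|z_0|^2)^{-Np}\theta(I_{z_0})$, while $|f_{z_0}(z_0)| = (1-|z_0|^2)^{-N}$.

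Combining these, I would apply the point-evaluation bound to $h = gf_{z_0}$ to obtain
$$|g(z_0)|(1-|z_0|^2)^{-N} = |(gf_{z_0})(z_0)| \lesssim \theta(I_{z_0})^{-1/p}\,\|\M_g\|\,\|f_{z_0}\|_{H^p(\theta)} \lesssim \|\M_g\|\,(1-|z_0|^2)^{-N},$$
so that $|g(z_0)| \lesssim \|\M_g\|$ uniformly in $z_0 \in \B$, whence $g \in H^\infty$. The one substantive step is the norm estimate for $f_{z_0}$: one must use the strict inequality $\lambda < np$ in Proposition \ref{prop:doubling}(iii) so that the geometric series over dyadic shells converges for some finite $N$; the rest is a direct substitution in which the factors $\theta(I_{z_0})^{\pm 1/p}$ and $(1-|z_0|^2)^{-N}$ cancel exactly.
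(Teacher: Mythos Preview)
Your proof is correct, but it takes a different route from the paper's. The paper uses the power trick: iterating $\M_g$ on the constant function gives $\|g^m\|_{H^p(\theta)}\le \|\M_g\|^m\|1\|_{H^p(\theta)}$, and then one invokes the embedding $H^p(\theta)\subset H^{p_1}$ (Proposition~\ref{prop:Approperties}) together with $\|g\|_\infty=\lim_m\|g\|_{mp_1}=\lim_m\|g^m\|_{H^{p_1}}^{1/m}$ to conclude $\|g\|_\infty\lesssim\|\M_g\|$. Your reproducing-kernel argument trades the embedding for the point-evaluation bound $|h(z_0)|\lesssim\theta(I_{z_0})^{-1/p}\|h\|_{H^p(\theta)}$ and the test-function estimate $\|f_{z_0}\|_{H^p(\theta)}^p\lesssim(1-|z_0|^2)^{-Np}\theta(I_{z_0})$; the latter is exactly Proposition~\ref{prop:test}, and the former is essentially a weak form of Corollary~\ref{cor:pointwise2}. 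So your approach is longer for this proposition in isolation, but it uses precisely the two ingredients that the paper develops anyway for the necessity proof in Proposition~\ref{prop:necH}, and it yields the sharp bound $\|g\|_\infty\lesssim\|\M_g\|$ directly without passing through a limit in $m$.
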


\begin{proof} It is clear that, if $g\in H^\infty$, then $g$  is a pointwise multiplier of $H^p(\theta)$. 

The converse assertion is a consequence of   the inequality
$\|g^m\|_{H^p(\theta)}\le \|\M_g\|^m \|1\|_{H^p(\theta)}$, where $\|\M_g\|$ denotes the norm of the operator $\M_g(f)=gf$, and that 
$\|g\|_{H^\infty}\lesssim \sup_{m} \|g^m\|_{H^p(\theta)}^{1/m}$.
\end{proof}

\begin{prop}\label{prop:test}
Let $1<p<+\infty$, $\theta\in \A_p$, $\lambda$ the constant in the doubling condition of $\theta$, $\theta(I_{\z,2r})\lesssim C2^\lambda \theta(I_{\z,r})$, and $Np> \lambda$.   Let $f_z(w)=\dfrac1{(1-w\overline{z})^N}$, for $z\in \B$. We then have
$$\|f_z\|_{H^p(\theta)}^p \lesssim  \frac{\theta(I_z)}{(1-|z|^2)^{Np}}.$$
\end{prop}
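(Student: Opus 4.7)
The plan is to reduce to estimating $\int_{\bB} |1-\zeta\bar w|^{-Np}\,\theta(\zeta)\,d\sigma(\zeta)$ for $w = rz$ and then take the supremum over $r \in (0,1)$. Since $f_z(r\zeta) = f_{rz}(\zeta)$, and writing $\zeta_0 = z/|z| = w/|w|$, the problem becomes to show that
$$I(w) := \int_{\bB}\frac{\theta(\zeta)}{|1-\zeta\bar w|^{Np}}\,d\sigma(\zeta) \lesssim \frac{\theta(I_w)}{(1-|w|^2)^{Np}}$$
with a constant independent of $w$, and then to compare $\theta(I_w)/(1-|w|^2)^{Np}$ with $\theta(I_z)/(1-|z|^2)^{Np}$.

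For the first step I would use the standard annular decomposition of $\bB$ around $\zeta_0$: set $A_0 = I_{\zeta_0, 1-|w|^2} = I_w$ and for $k\ge 1$
$$A_k = I_{\zeta_0,\,2^k(1-|w|^2)} \setminus I_{\zeta_0,\,2^{k-1}(1-|w|^2)}.$$
Using the elementary bound $|1-\zeta\bar w|\gtrsim 1-|w|^2$ together with the triangle inequality $|1-\zeta\bar w|\le |1-\zeta_0 \bar w| + |\zeta-\zeta_0|$, one checks that $|1-\zeta\bar w|\approx 2^k(1-|w|^2)$ on $A_k$ (understood as $\approx 1-|w|^2$ on $A_0$). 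Combined with the doubling estimate in Proposition \ref{prop:doubling}(iii), iterated $k$ times, giving $\theta(I_{\zeta_0,2^k(1-|w|^2)}) \lesssim 2^{k\lambda}\theta(I_w)$, this yields
$$\int_{A_k} \frac{\theta(\zeta)}{|1-\zeta\bar w|^{Np}}\,d\sigma(\zeta) \lesssim 2^{k(\lambda-Np)}\frac{\theta(I_w)}{(1-|w|^2)^{Np}}.$$
Summing over $k\ge 0$ gives a convergent geometric series since $Np>\lambda$, proving the claim.

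Finally, to remove the dependence on $r$, set $\rho := (1-|rz|^2)/(1-|z|^2)\ge 1$; since $I_w = I_{\zeta_0,\rho(1-|z|^2)}$, the doubling condition gives $\theta(I_w)\lesssim \rho^\lambda \theta(I_z)$, hence
$$\frac{\theta(I_w)}{(1-|w|^2)^{Np}} \lesssim \rho^{\lambda-Np}\,\frac{\theta(I_z)}{(1-|z|^2)^{Np}} \le \frac{\theta(I_z)}{(1-|z|^2)^{Np}},$$
using $\rho\ge 1$ and $Np>\lambda$ once more. Taking $\sup_r$ delivers the stated bound on $\|f_z\|_{H^p(\theta)}^p$. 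The argument is essentially routine; the only point requiring the hypothesis $Np>\lambda$ is convergence of the geometric series in the summation step, which is also precisely where the doubling exponent of $\theta$ intervenes. There is no substantive obstacle.
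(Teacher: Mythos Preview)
Your argument is correct and follows the same approach as the paper: a dyadic annular decomposition around $\zeta_0=z/|z|$, the doubling estimate $\theta(2^kI_z)\lesssim 2^{k\lambda}\theta(I_z)$, and summation of the resulting geometric series under the hypothesis $Np>\lambda$. The only difference is that the paper passes directly to the boundary integral $\int_{\bB}|1-\zeta\bar z|^{-Np}\theta(\zeta)\,d\sigma(\zeta)$ (legitimate here since $f_z$ is holomorphic across $\bar\B$, so the sup over $r$ is the limit $r\to1$), whereas you carry out the extra step of bounding the integral for $w=rz$ and then comparing $\theta(I_{rz})/(1-|rz|^2)^{Np}$ with $\theta(I_z)/(1-|z|^2)^{Np}$; this is harmless extra care.
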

\begin{proof} By Proposition \ref{prop:doubling}, the measure $\theta d\sigma$ is a doubling measure and $\theta(2I_z)\lesssim 2^{\lambda} \theta(I_z)$ with $\lambda<np$. 
If $z=|z|\z$, and $2 I_z=I_{\z,2(1-|z|^2)}$, we have 
\begin{align*}
 \left\|\frac1{(1-w\overline{z})^N}\right\|_{H^p(\theta)}^p &= \int_{\bB}\frac 1{|1-\zeta\overline{z}|^{Np}}\theta(\zeta)d\sigma(\zeta)\\
\quad&\lesssim  \sum_{k\geq0} \frac{\theta(2^k I_z)}{2^{kNp}(1-|z|^2)^{Np}}\lesssim  \sum_{k\geq0} \frac{2^{k\lambda}\theta(I_z)}{2^{kNp}(1-|z|^2)^{Np}}\\\quad&\lesssim  
\frac{ \theta(I_z)}{ (1-|z|^2)^{Np}},
\end{align*}
where in last estimate we have used the fact that $Np>\lambda$.
\end{proof}
We recall that a positive Borel measure $\mu$ on $\B$ is a Carleson measure for a space $X^p$ of functions in $\B$, if there exists $C>0$ such that for any $f\in X^p$, 
\begin{equation}\label{eqn:CMHp}
\int_{\B} |f(z)|^pd\mu(z)\leq C\|f\|_{X^p}^p.
\end{equation}
As in the unweighted case, when $1<p<+\infty$, $\theta$ is an $\A_p$ and $X^p$ is either $H^p(\theta)$ or the space $\mathbb{P}[L^p(\theta)]$, these measures can be characterized in terms of conditions on tent of balls.

The space $\mathbb{P}[L^p(\theta)]$ is normed by $\|u\|_{\mathbb{P}[L^p(\theta)]}=\|f\|_{L^p(\theta)}$, where $u=P[f]$, and we recall that $\|u\|_{\mathbb{P}[L^p(\theta)]}\approx \| M_\alpha[u]\|_{L^p(\theta)}$.
 
 We then have:

\begin{prop} \label{prop:CarHp1} Let $1<p<+\infty$, $\mu$ a positive Borel measure on $\B$ and $\theta$ be a weight in $\A_p$. Then the following assertions are equivalent:
\begin{itemize}
\item[(i)] $\mu$ is a Carleson measure for $\mathbb{P}[L^p(\theta)]$.
\item[(ii)] $\mu$ is a Carleson measure for $H^p(\theta)$.
 \item[(iii)]There is a constant $C$ such that for all $z\in \B$, $\mu(\hat{I}_z)\le C \theta(I_z).$ 
\end{itemize}
\end{prop}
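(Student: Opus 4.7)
The plan is to establish (iii) $\Rightarrow$ (i), (ii) by a distribution-function identity combined with a Whitney-tent covering, and the reverse implications by testing against reproducing-kernel powers $f_z$ supplied by Proposition \ref{prop:test}. Since all three conditions will be shown equivalent to (iii), the equivalence (i) $\Leftrightarrow$ (ii) then follows automatically.

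For (iii) $\Rightarrow$ (ii), fix $f \in H^p(\theta)$ and set $\Omega_t = \{z \in \B : |f(z)| > t\}$. If $z \in \Omega_t$ lies in $\Gamma_{\zeta,\alpha}$ then $M_\alpha f(\zeta) \ge |f(z)| > t$, so by the very definition of the tent $\Omega_t \subset T_\alpha(E_t)$ with $E_t = \{M_\alpha f > t\}$. A Whitney decomposition of the open set $E_t \subset \bB$ into disjoint non-isotropic balls $I_{z_k}$, with slightly enlarged balls $I_{z_k^*}$ of bounded overlap still lying in $E_t$, yields $T_\alpha(E_t) \subset \bigcup_k \hat I_{z_k^*}$. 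Combining the hypothesis (iii) with the doubling of $\theta$ from Proposition \ref{prop:doubling}(iii),
\begin{equation*}
\mu(\Omega_t) \le \sum_k \mu(\hat I_{z_k^*}) \lesssim \sum_k \theta(I_{z_k^*}) \lesssim \sum_k \theta(I_{z_k}) \lesssim \theta(E_t).
\end{equation*}
Multiplying by $p t^{p-1}$ and integrating, together with the admissible maximal characterization $\|f\|_{H^p(\theta)} \approx \|M_\alpha f\|_{L^p(\theta)}$ of Proposition \ref{prop:Approperties}(i), produces the Carleson inequality. The argument for (iii) $\Rightarrow$ (i) is formally identical, applied to $u = \mathbb{P}[g]$ and using the pointwise bound $M_\alpha u \lesssim M_{H-L} g$ together with the boundedness of $M_{H-L}$ on $L^p(\theta)$ granted by $\theta \in \A_p$.

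For (ii) $\Rightarrow$ (iii) and (i) $\Rightarrow$ (iii), fix $N$ with $Np > \lambda$ (the doubling exponent of $\theta$) and test against $f_z(w) = (1-w\bar z)^{-N}$. On the Carleson region $\hat I_z$ one has $|1-w\bar z| \lesssim 1-|z|^2$, so $|f_z(w)|^p \gtrsim (1-|z|^2)^{-Np}$ uniformly for $w \in \hat I_z$, and Proposition \ref{prop:test} yields
\begin{equation*}
\frac{\mu(\hat I_z)}{(1-|z|^2)^{Np}} \lesssim \int_{\hat I_z} |f_z|^p\, d\mu \le C \|f_z\|_{H^p(\theta)}^p \lesssim \frac{\theta(I_z)}{(1-|z|^2)^{Np}}.
\end{equation*}
For (i) $\Rightarrow$ (iii) one uses that the holomorphic $f_z$ coincides with the Poisson-Szeg\H{o} extension of its own boundary values, so $\|f_z\|_{\mathbb{P}[L^p(\theta)]}$ equals the same $L^p(\theta)$ norm already estimated. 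The main technical point hidden in the first direction is the passage from the pointwise hypothesis (iii) to the uniform bound $\mu(T_\alpha(E)) \lesssim \theta(E)$ for arbitrary open $E \subset \bB$; this is standard in doubling quasi-metric spaces and rests squarely on Proposition \ref{prop:doubling}(iii).
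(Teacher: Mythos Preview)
Your proof is correct and follows essentially the same approach as the paper: the hard direction (iii) $\Rightarrow$ (i) via the distribution-function identity, the tent inclusion $\{|u|>t\}\subset T_\alpha(\{M_\alpha u>t\})$, a covering argument combined with doubling, and the maximal characterization; and the reverse implications by testing against $f_z(w)=(1-w\bar z)^{-N}$ using Proposition~\ref{prop:test}. The only differences are organizational---the paper runs the cycle (i) $\Rightarrow$ (ii) $\Rightarrow$ (iii) $\Rightarrow$ (i), getting (i) $\Rightarrow$ (ii) in one line from $f=\mathbb{P}[f^*]$, and uses a Vitali-type $3r$-covering of a compact exhaustion rather than a Whitney decomposition---but the substance is the same.
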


\begin{proof}
From Theorem 5.6.8 in \cite{Ru}, and Proposition \ref{prop:Approperties}, it is immediate to deduce that for any function $f\in H^p(\theta)$, $f=\mathcal{C }[f^*]=\mathbb{P}[f^*]$, and $\|f\|_{H^p(\theta)}\approx  \|f\|_{\mathbb{P}[L^p(\theta)]}\approx  \|f^*\|_{L^p(\theta)}$, where $f^*$ are the boundary values of the function $f$. Hence, any Carleson measure for $\mathbb{P}[L^p[\theta]]$ is also a Carleson measure for $H^p(\theta)$, and, in consequence, (i) implies (ii).

Next, assume that (ii) holds, and for $z=|z|\z$, and $N>0$ big enough, let $f_z(w)=\frac1{(1-w\overline{z})^N}$. We then have that for any $w\in \hat I_z$, $|1-w\overline{z}|\lesssim  1-|z|^2$. Thus Proposition \ref{prop:test} gives
\begin{align*}\frac{\mu(\hat I_z)}{(1-|z|^2)^{Np}} \lesssim  \int_{\B} \frac{d\mu(w)}{|1-w\overline{z}|^{Np}}\lesssim  \|f_z\|_{H^p(\theta)}^p\lesssim  \frac{ \theta(I_z)}{ (1-|z|^2)^{Np}} .\end{align*}

So we are left to show that (iii) implies (i).
We have 
$$\int_{\B}|\mathbb{P}[f]|^pd\mu= p\int_0^{+\infty} \mu(\{ \mathbb{P}[f]>\lambda\})\lambda^{p-1} d\lambda.$$
But $\{ \mathbb{P}[f]>\lambda\}\subset T(\{ \zeta\,;\, M_\alpha \mathbb{P}[f]>\lambda\})$. Since $A_\lambda=\{ \zeta\,:\, M_\alpha\mathbb{P}[f]>\lambda\}$ is an open set, $A_\lambda=\cup I_{\zeta, r_\zeta}$, and for any compact $K\subset A_\lambda$, there exists a finite subfamily of pairwise disjoint open balls $I_{\zeta_i,r_{\zeta_i}}$ such that $K\subset\cup_1^M I_{\zeta_i,3r_{\zeta_i}}$.
Consequently,
\begin{align*}\begin{split}
\mu(T(K))&\leq \sum_{i=1}^M \mu(T(I_{\zeta_i,3r_{\zeta_i}}))\lesssim  \sum_{i=1}^M \theta(I_{\zeta_i,3r_{\zeta_i}})\\
& \lesssim \sum_{i=1}^M \theta(I_{\zeta_i,r_{\zeta_i}})=\theta(\cup_{i=1}^M I_{\zeta_i,r_{\zeta_i}})\lesssim  \theta(A_\lambda),\end{split}\end{align*}
and
$$\int_{\B}|\mathbb{P}[f]|^pd\mu\lesssim  \int_0^{+\infty} \theta(A_\lambda) \lambda^{p-1} d\lambda\approx  \int_{\bB } |M_\alpha \mathbb{P}[f]|^p d\theta\lesssim  \int_{\bB } |f|^p d\theta
.$$
\end{proof}

\begin{prop} \label{prop:CarHp2} If $g\in H^\infty$, then  $|\p  g(z)|^2(1-|z|^2)\Theta(z)d\nu(z)$ and $|\p_T g(z)|^2\Theta(z)d\nu(z)$ are  Carleson measures for $H^2(\theta)$.
\end{prop}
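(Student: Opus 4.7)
The plan is to verify the Carleson measure condition analytically, testing against an arbitrary $h\in H^2(\theta)$, rather than going through the geometric condition of Proposition \ref{prop:CarHp1}. The key observation is that since $g\in H^\infty$, multiplication by $g$ is a bounded operator on $H^2(\theta)$ with $\|gh\|_{H^2(\theta)}\le \|g\|_{H^\infty}\|h\|_{H^2(\theta)}$, so both $h$ and $gh$ can be fed into the Littlewood--Paley bound of Proposition \ref{prop:charhp}.

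First I would exploit Leibniz's rule. Since $g$ and $h$ are holomorphic, the identity $h\,\p g=\p(gh)-g\,\p h$ holds componentwise, and the pointwise bound $|g|\le \|g\|_{H^\infty}$ gives
$$|h(z)|^{2}|\p g(z)|^{2}\le 2|\p(gh)(z)|^{2}+2\|g\|_{H^\infty}^{2}|\p h(z)|^{2}.$$
Multiplying by $(1-|z|^{2})\Theta(z)$, integrating over $\B$, and applying Proposition \ref{prop:charhp} once to $h$ and once to $gh$, I obtain
$$\int_{\B}|h(z)|^{2}|\p g(z)|^{2}(1-|z|^{2})\Theta(z)\,d\nu(z)\lesssim \|g\|_{H^\infty}^{2}\|h\|_{H^2(\theta)}^{2},$$
which is exactly the Carleson measure condition for $|\p g|^{2}(1-|z|^{2})\Theta\,d\nu$. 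The tangential case is identical: the identity $h\,\p_T g=\p_T(gh)-g\,\p_T h$ yields $|h|^{2}|\p_T g|^{2}\le 2|\p_T(gh)|^{2}+2\|g\|_{H^\infty}^{2}|\p_T h|^{2}$, and a second application of Proposition \ref{prop:charhp} gives the analogous bound without the extra factor $(1-|z|^{2})$.

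There is essentially no obstacle here: the argument is a clean reduction to the Littlewood--Paley estimate already proved. The only small point worth noting is the use of the boundedness of $H^\infty$ as a pointwise multiplier of $H^2(\theta)$, which was verified at the beginning of this section. Alternatively one could test the geometric condition $\mu(\hat I_z)\lesssim \theta(I_z)$ from Proposition \ref{prop:CarHp1} using the reproducing functions $f_z(w)=(1-w\bar z)^{-N}$ of Proposition \ref{prop:test}, but the direct analytic route above is considerably shorter and avoids introducing any auxiliary geometry.
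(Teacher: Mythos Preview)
Your proof is correct and is essentially identical to the paper's own argument: both use the Leibniz identity $h\,\p g=\p(gh)-g\,\p h$ (and its tangential analogue), the boundedness of multiplication by $g\in H^\infty$ on $H^2(\theta)$, and then apply Proposition~\ref{prop:charhp} to $h$ and to $gh$. Your closing remark about the alternative geometric route via Proposition~\ref{prop:CarHp1} is also apt---the paper does not take that detour either.
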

\begin{proof} Let  $f\in H^2(\theta)$. 
Since $gf\in H^2(\theta)$, by Proposition \ref{prop:charhp} we have that 
$$\int_{\B}|\p(gf)(z)|^2(1-|z|^2)\Theta(z)d\nu(z)\approx  \|gf\|_{H^2(\theta)}^2\lesssim  \|f\|_{H^2(\theta)}^2.$$
This observation, the fact that $f\in H^2(\theta)$, $g\in H^\infty$ and
$f\p g=\p(gf)-g\p f$, give that,
\begin{align*}
&\int_{\B}|f(z)|^2|\p g(z)|^2(1-|z|^2)\Theta(z)d\nu(z)\\
&\quad\leq\int_{\B}|\p(gf)(z)|^2(1-|z|^2)\Theta(z)d\nu(z)\\
&\qquad +\int_{\B}|\p f(z)|^2| g(z)|^2(1-|z|^2)\Theta(z)d\nu(z)\\
&\quad\lesssim  \|f\|_{H^2(\theta)}^2.
\end{align*}
We now deal with the second assertion. We have that $fD_{i,\,j}g=D_{i,\,j}(gf)-gD_{i,\,j}f$.
Hence, 
\begin{align*}\begin{split}
\int_{\B}|f(z)|^2|\p_T g(z)|^2&\Theta(z)d\nu(z)
\leq\int_{\B}|\p_T(gf)(z)|^2\Theta(z)d\nu(z)\\
&+\int_{\B}|\p_Tf(z)|^2| g(z)|^2(1-|z|^2)\Theta(z)d\nu(z).
\end{split}\end{align*}

Applying Proposition \ref{prop:embed:MHp} to both $f$ and $gf$ in the preceding estimate with $g\in H^\infty$, we obtain that
\begin{align*}\begin{split}
&\int_{\B}|f(z)|^2|\p_T g(z)|^2\Theta(z)d\nu(z)\lesssim 
\|gf\|_{H^2(\theta)}^2+\|f\|_{H^2(\theta)}^2\lesssim  \|f\|_{H^2(\theta)}^2.
\end{split}\end{align*}
\end{proof}

As a consequence of  Propositions \ref{prop:CarHp1} and \ref{prop:CarHp2} we have:

\begin{prop} \label{lem:estKw12} Let $\theta\in \A_2(\bB)$ and $g\in H^\infty$. Let 
$$d\mu_{g,\theta}(z)=\Theta(z)\left( (1-|z|^2)|\p g(z)|^2+|\p_T g(z)|^2\right)d\nu(z).$$

If  $\varphi\in L^2(\theta)$ , then 
$$\int_{\B} \left|\mathbb{P}(\varphi)(z)\right|^2 d\mu_{g,\theta}(z)\lesssim \|\varphi\|_{L^2(\theta)}^2.$$
\end{prop}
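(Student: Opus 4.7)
The proof is essentially a direct combination of the two propositions that immediately precede the statement, so my plan is to chain them together.

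First, I would observe that Proposition \ref{prop:CarHp2} already shows that both $|\p g(z)|^2(1-|z|^2)\Theta(z)\,d\nu(z)$ and $|\p_T g(z)|^2\Theta(z)\,d\nu(z)$ are Carleson measures for $H^2(\theta)$ whenever $g \in H^\infty$. Their sum is exactly $d\mu_{g,\theta}$, and the sum of two Carleson measures for $H^2(\theta)$ is again a Carleson measure for $H^2(\theta)$. So $d\mu_{g,\theta}$ is a Carleson measure for $H^2(\theta)$.

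Second, I would apply Proposition \ref{prop:CarHp1} with $p = 2$. The equivalence (i) $\Leftrightarrow$ (ii) in that proposition says that being a Carleson measure for $H^2(\theta)$ is equivalent to being a Carleson measure for the Poisson–Szeg\"o space $\mathbb{P}[L^2(\theta)]$ (both characterized by the geometric tent condition $\mu(\hat{I}_z) \lesssim \theta(I_z)$). Hence $d\mu_{g,\theta}$ is a Carleson measure for $\mathbb{P}[L^2(\theta)]$.

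Finally, unwinding the definition of Carleson measure in \eqref{eqn:CMHp} for the space $X^2 = \mathbb{P}[L^2(\theta)]$: for every $\varphi \in L^2(\theta)$, the function $u = \mathbb{P}[\varphi]$ lies in $\mathbb{P}[L^2(\theta)]$ with $\|u\|_{\mathbb{P}[L^2(\theta)]} = \|\varphi\|_{L^2(\theta)}$, so
$$\int_{\B}|\mathbb{P}(\varphi)(z)|^2\,d\mu_{g,\theta}(z) \lesssim \|u\|_{\mathbb{P}[L^2(\theta)]}^2 = \|\varphi\|_{L^2(\theta)}^2,$$
which is the claim. There is no real obstacle here; all the work has been done in Propositions \ref{prop:CarHp1} and \ref{prop:CarHp2}, and this proposition is packaged as a convenient corollary to be invoked in later sections where $\mathbb{P}(\varphi)$ will appear in the construction of solutions to $\bar\partial$ and in Wolff-type estimates.
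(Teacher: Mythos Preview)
Your proposal is correct and follows exactly the approach the paper takes: the paper simply states the proposition as ``a consequence of Propositions \ref{prop:CarHp1} and \ref{prop:CarHp2}'' without writing out a proof, and your argument spells out that deduction in the natural way.
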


\subsection{Multipliers on Morrey spaces}
The next result gives a characterization of the pointwise multipliers of $ HM^{p,s},$ for $0<s\le n/p.$

\begin{prop}\label{lem:lem11} If $1<p<\infty$ and $0<s\le n/p,$ a function $g$ is a pointwise multiplier of $HM^{p,s},$ if and only if $g\in H^\infty.$
\end{prop}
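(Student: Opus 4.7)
The proof should closely mirror the argument already given for the analogous statement on weighted Hardy spaces. The forward direction ($g\in H^\infty$ implies $g$ is a multiplier) is immediate from the pointwise bound $|g(\eta)f(\eta)|\le \|g\|_{H^\infty}|f(\eta)|$: since this inequality is preserved under the $L^p$ norm and, equally, under the Morrey seminorm in \eqref{eqn:Morreynorm2}, the operator $\M_g$ maps $HM^{p,s}$ into itself with norm at most $\|g\|_{H^\infty}$.

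For the converse, my plan is to iterate $\M_g$ and extract pointwise information from membership in $H^p$. Note that the constant function $1$ belongs to $HM^{p,s}$, so $g=\M_g(1)\in HM^{p,s}$ and, by induction on $m$,
\begin{equation*}
\|g^m\|_{HM^{p,s}}=\|\M_g^{\,m}(1)\|_{HM^{p,s}}\le \|\M_g\|^m\,\|1\|_{HM^{p,s}}.
\end{equation*}
The key embedding is Proposition \ref{prop:embed:MHp}, which gives $HM^{p,s}\subset H^p$, so
\begin{equation*}
\|g^m\|_{H^p}\lesssim \|g^m\|_{HM^{p,s}}\lesssim \|\M_g\|^m.
\end{equation*}

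The final step exploits the standard pointwise estimate $|h(z)|\lesssim \|h\|_{H^p}(1-|z|^2)^{-n/p}$ valid for every $h\in H^p$. Applying this to $h=g^m$ yields
\begin{equation*}
|g(z)|^m=|g^m(z)|\lesssim \|\M_g\|^m (1-|z|^2)^{-n/p},
\end{equation*}
and taking $m$-th roots and letting $m\to\infty$ gives $|g(z)|\le \|\M_g\|$ for every $z\in \B$. Hence $g\in H^\infty$ with $\|g\|_{H^\infty}\le \|\M_g\|$.

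I do not expect any serious obstacle, since every ingredient is already set up in the preliminaries: the pointwise bound for $H^p$ functions is classical, and the embedding $HM^{p,s}\subset H^p$ has been recorded in Proposition \ref{prop:embed:MHp}. The only mild subtlety is to verify that $1\in HM^{p,s}$, which is clear either from \eqref{eqn:Morreynorm1} (the difference $|1-1|$ vanishes) or from \eqref{eqn:Morreynorm2} (the $\e^{sp-n}\int_{I_{\z,\e}}d\sigma$ factor is uniformly bounded since $|I_{\z,\e}|\approx \e^n$ and $sp\ge 0$).
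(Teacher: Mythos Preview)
Your proof is correct and follows essentially the same route as the paper: iterate the multiplier to get $\|g^m\|_{M^{p,s}}\le\|\M_g\|^m\|1\|_{M^{p,s}}$, pass to $H^p$ via the embedding $HM^{p,s}\subset H^p$, and then extract the $H^\infty$ bound by the root trick. The only cosmetic difference is the last step: the paper observes $\|g^m\|_{H^p}=\|g\|_{H^{pm}}^m$ and uses $\|g\|_{H^\infty}=\lim_{m\to\infty}\|g\|_{H^{pm}}$, whereas you use the pointwise estimate $|g^m(z)|\lesssim\|g^m\|_{H^p}(1-|z|^2)^{-n/p}$ before taking $m$-th roots; both are standard and equivalent here.
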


\begin{proof} It is clear that if $g\in H^\infty,$ then $\|gf\|_{M^{p,s}}\le \|g\|_{\infty}\|f\|_{M^{p,s}}.$

To prove the converse result, note that for a positive integer $m,$
$$\|g^m\|_{p}\le \|g^m\|_{M^{p,s}}\le \|\M_g\|^m\|1\|_{M^{p,s}},$$
where $\|\M_g\|$ denotes the norm of the operator $f\rightarrow gf.$
Therefore, $\|g\|_{pm}\lesssim \|\M_g\|.$ Since $\|g\|_{\infty}=\lim_{m\rightarrow\infty}\|g\|_{pm}$ we obtain the result.
\end{proof}

\section{Necessary conditions on the  corona problem}\label{sect:necessary}

\subsection{Necessary conditions for weighted Hardy spaces}
In order to obtain necessary conditions on the corona problem we recall the following lemma which has been proved in \cite{Ca-Or2}
\begin{lem}\label{lem:pointwise} 
Let $1<p<+\infty$ and $\theta$ a weight in $\A_p$. There exists $C>0$ such that for any holomorphic function $f$ in $\B$, and any $z=|z|\z$,
$$|f(z)|\leq C\left( |f(0)|+\int_{1-|z|^2}^1 \frac{dt}{\theta(I_{\z,t})^{1/p}\, t}
\|f\|_{H^p(\theta)}\right).$$
\end{lem}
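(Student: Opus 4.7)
The plan is to represent $f$ via the Szeg\"o reproducing formula and bound the resulting kernel integral by a nonisotropic dyadic decomposition of $\bB$ adapted to the direction $\z$. One may assume $f\in H^p(\theta)$, for otherwise the right hand side is infinite. By Proposition~\ref{prop:Approperties}(iii) the Cauchy projection is bounded on $L^p(\theta)$, and writing $f^*$ for the nontangential boundary values of $f$ one has $f=\mathcal C[f^*]$ (the identification is standard once $f\in H^{p_1}$ for some $p_1>1$, which is guaranteed by Proposition~\ref{prop:Approperties}(ii)). Consequently, for $z=r\z$,
$$|f(z)|\le \int_{\bB}\frac{|f^*(\eta)|}{|1-z\bar\eta|^n}\,d\sigma(\eta).$$

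First I would decompose $\bB$ into the nonisotropic shells $A_0=I_{\z,1-r^2}$ and $A_k=I_{\z,2^k(1-r^2)}\setminus I_{\z,2^{k-1}(1-r^2)}$ for $1\le k\le K$, with $K$ the smallest integer satisfying $2^K(1-r^2)\gtrsim 1$. Using the standard estimate $|1-z\bar\eta|\asymp (1-r)+|1-\z\bar\eta|$, the denominator is comparable to $2^k(1-r^2)$ on each shell, whence
$$|f(z)|\lesssim \sum_{k=0}^K\frac{1}{(2^k(1-r^2))^n}\int_{A_k}|f^*|\,d\sigma,$$
and H\"older's inequality with the weight pair $(\theta,\theta')$ bounds the inner integral by $\|f\|_{H^p(\theta)}\,\theta'(I_{\z,2^k(1-r^2)})^{1/p'}$.

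The $\A_p$ condition \eqref{eqn:Ap} reads $\theta(I)\,\theta'(I)^{p-1}\lesssim |I|^p$; raising to the power $1/p'$ and using $(p-1)p'=p$ gives $\theta'(I)^{1/p'}\lesssim |I|\theta(I)^{-1/p}$. Combined with $|I_{\z,t}|\asymp t^n$, the factor $(2^k(1-r^2))^n$ cancels, leaving
$$|f(z)|\lesssim \|f\|_{H^p(\theta)}\sum_{k=0}^K\theta(I_{\z,2^k(1-r^2)})^{-1/p}.$$
Since $\theta\,d\sigma$ is doubling by Proposition~\ref{prop:doubling}(iii), $\theta(I_{\z,t})^{-1/p}$ is essentially constant on each dyadic band $[2^{k-1}(1-r^2),2^k(1-r^2)]$, so this sum is equivalent to $\int_{1-r^2}^{2^K(1-r^2)}\theta(I_{\z,t})^{-1/p}\,dt/t$.

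Two minor bookkeeping issues remain. The dyadic cutoff produces an upper limit $\asymp 1$ rather than exactly $1$, but the overshoot $\int_1^2\theta(I_{\z,t})^{-1/p}\,dt/t$ is bounded by a constant depending only on $\theta(\bB)$; moreover, for $|z|\le 1/2$ the integral in the conclusion is itself bounded below by a positive absolute constant and the interior Cauchy estimate already gives $|f(z)|\lesssim |f(0)|+\|f\|_{H^p(\theta)}$, so the stated inequality holds trivially in that regime. The $|f(0)|$ term in the conclusion is there precisely to absorb these constant contributions. I expect no serious obstacle beyond this routine case split; the conceptual heart of the argument is the dyadic Cauchy bound combined with the $\A_p$ reformulation $\theta'(I)^{1/p'}\lesssim|I|\theta(I)^{-1/p}$.
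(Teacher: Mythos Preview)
The paper does not give a proof of this lemma; it is quoted from \cite{Ca-Or2}, so there is no in-paper argument to compare against. Your main line---Cauchy reproduction, nonisotropic dyadic shells, H\"older against the pair $(\theta,\theta')$, and the $\A_p$ rewriting $\theta'(I)^{1/p'}\lesssim|I|\,\theta(I)^{-1/p}$---is correct and is the natural approach. For $|z|$ bounded away from $0$ (say $|z|\ge 1/2$) it goes through cleanly: the integral $\int_{1-|z|^2}^1\theta(I_{\z,t})^{-1/p}\,dt/t$ then contains $\int_{3/4}^1$ and is bounded below by a positive constant, so the dyadic overshoot is absorbed.

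The gap is in your last paragraph. You claim that for $|z|\le 1/2$ the integral in the statement is bounded below by a positive absolute constant; this is false. As $|z|\to 0$ the interval of integration $[1-|z|^2,1]$ collapses while the integrand stays bounded (for $t$ near $1$ one has $\theta(I_{\z,t})\approx\theta(\bB)$ by doubling), so the integral is of order $\ln\bigl(1/(1-|z|^2)\bigr)\approx|z|^2$. Consequently the right-hand side near the origin is $C\bigl(|f(0)|+O(|z|^2)\|f\|_{H^p(\theta)}\bigr)$, which cannot dominate a crude interior bound of the form $|f(z)|\lesssim\|f\|_{H^p(\theta)}$, nor is the overshoot constant absorbed by the $|f(0)|$ term. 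In fact the inequality as literally transcribed appears to fail for small $|z|$ whenever $f(0)=0$: with $\theta\equiv 1$ and $f(w)=w_1$, at $z=(t,0,\dots,0)$ with $t>0$ small the left side equals $t$ while the right side is $\approx t^2$. This is harmless for the paper, since Corollary~\ref{cor:pointwise2} only uses the lemma in combination with $|f(0)|\lesssim\|f\|_{H^p(\theta)}$ and the lower bound $1\lesssim(\theta'(I_z))^{1/p'}(1-|z|^2)^{-n}$, and your dyadic argument delivers that corollary directly. But your closure of the small-$|z|$ case does not work as written.
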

As a corollary we obtain
\begin{cor}\label{cor:pointwise2}
Let $1<p<+\infty$ and $\theta$ a weight in $\A_p$. There exists $C>0$ such that for any holomorphic function $f$ in $\B$, and any $z\in \B$,
$$|f(z)|\leq C \frac{\left(\theta^{-p'/p}(I_z)\right)^{1/p'}}{(1-|z|^2)^n}\|f\|_{H^p(\theta)}.$$
\end{cor}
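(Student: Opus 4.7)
The plan is to derive this corollary directly from Lemma \ref{lem:pointwise} by bounding its right-hand side. Writing $\theta'=\theta^{-p'/p}$, I need to show
$$|f(0)|+\int_{1-|z|^2}^1\frac{dt}{\theta(I_{\z,t})^{1/p}\,t}\lesssim \frac{\theta'(I_z)^{1/p'}}{(1-|z|^2)^n}\|f\|_{H^p(\theta)}.$$

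The main tool for the integral is the $\A_p$ identity, which via H\"older's inequality implies $|I_{\z,t}|\leq \theta(I_{\z,t})^{1/p}\theta'(I_{\z,t})^{1/p'}$, so that
$$\frac{1}{\theta(I_{\z,t})^{1/p}}\leq \frac{\theta'(I_{\z,t})^{1/p'}}{|I_{\z,t}|}\approx \frac{\theta'(I_{\z,t})^{1/p'}}{t^n}.$$
Since $\theta\in\A_p$ implies $\theta'\in\A_{p'}$, Proposition \ref{prop:doubling}(iii) provides $\lambda'<np'$ with $\theta'(I_{\z,2r})\leq C2^{\lambda'}\theta'(I_{\z,r})$. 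Iterating this $k$ times with $2^k\approx t/(1-|z|^2)$ yields, for $t\geq 1-|z|^2$,
$$\theta'(I_{\z,t})^{1/p'}\lesssim \Bigl(\frac{t}{1-|z|^2}\Bigr)^{\lambda'/p'}\theta'(I_z)^{1/p'}.$$
Plugging this into the integral and using $\lambda'/p'-n<0$ (so that the integral converges and is dominated by the behaviour at the lower endpoint) gives
$$\int_{1-|z|^2}^1\frac{dt}{\theta(I_{\z,t})^{1/p}\,t}\lesssim \frac{\theta'(I_z)^{1/p'}}{(1-|z|^2)^{\lambda'/p'}}\int_{1-|z|^2}^1 t^{\lambda'/p'-n-1}dt\lesssim \frac{\theta'(I_z)^{1/p'}}{(1-|z|^2)^n}.$$

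For the term $|f(0)|$, I apply the mean value property and split $1=\theta^{1/p}\cdot\theta^{-1/p}$ with H\"older's inequality to obtain $|f(0)|\leq \|f\|_{H^p(\theta)}\,\theta'(\bB)^{1/p'}$. Writing $\bB\subset I_{\z,2}$ and iterating the doubling of $\theta'$ from scale $(1-|z|^2)$ up to scale $2$ as above, $\theta'(\bB)^{1/p'}\lesssim (1-|z|^2)^{-\lambda'/p'}\theta'(I_z)^{1/p'}\lesssim (1-|z|^2)^{-n}\theta'(I_z)^{1/p'}$, which gives the same target bound.

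I don't anticipate a serious obstacle: the only subtle point is verifying that the doubling exponent $\lambda'$ for $\theta'$ satisfies $\lambda'/p'<n$, which is exactly the sharp form of Proposition \ref{prop:doubling}(iii) (the inequality $\lambda<np$ rather than $\lambda\leq np$, achieved via the reverse H\"older/self-improvement property in (ii)). Once that strict inequality is in hand, the integral converges at the lower endpoint and the final bound follows immediately.
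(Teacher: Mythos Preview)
Your proposal is correct and follows essentially the same approach as the paper: both start from Lemma~\ref{lem:pointwise}, convert $\theta(I_{\z,t})^{-1/p}$ to $\theta'(I_{\z,t})^{1/p'}/t^n$ via H\"older (and the $\A_p$ relation), and then use the doubling of $\theta'\in\A_{p'}$ with exponent $\lambda'<np'$ to sum or integrate. The only cosmetic differences are that the paper uses a dyadic sum where you integrate continuously, and for the $|f(0)|$ term the paper bounds $1\lesssim (1-|z|^2)^{-n}\theta'(I_z)^{1/p'}$ directly from H\"older and $\theta(I_z)\le\theta(\bB)$, rather than your doubling argument for $\theta'(\bB)$.
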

\begin{proof} Let $z=|z|\z\ne 0$.
The fact that $\theta$ is in $\A_p$ gives that $\theta^{-p'/p}$ is in $\A_{p'}$, it satisfies a doubling condition of order $\lambda<np'$, and consequently
\begin{align*}
\int_{1-|z|^2}^1 \frac{1}{\theta(I_{\z,t})^{1/p} } \frac{dt}{t}&\approx  \int_{1-|z|^2}^1 \left( \frac{\theta^{-p'/p}(I_{\z,t}) }{t^n}\right)^{1/p'}\frac1{t^{n/p}}\frac{dt}{t}\\
&\lesssim  \sum_{k\geq 0} 2^{k(\lambda/p'-n)} \frac{\left(\theta^{-p'/p}(I_z)\right)^{1/p'} }{(1-|z|^2)^n}\\
&\approx \frac{\left(\theta^{-p'/p}(I_z)\right)^{1/p'} }{(1-|z|^2)^n}.
\end{align*}

In order to finish we just have to show that  $|f(0)|\lesssim  \dfrac{\left(\theta^{-p'/p}(I_z)\right)^{1/p'}}{(1-|z|^2)^n}$. This is a consequence from the fact that
$$|f(0)|\leq \int_{\bB}|f|d\sigma=\int_{\bB}|f|\theta^{1/p}\theta^{- 1/p}d\sigma\lesssim  \|f\|_{H^p(\theta)},$$
and 
$$ 1\lesssim \frac{\left(\theta(I_z)\right)^{1/p}\left(\theta^{-p'/p}(I_z)\right)^{1/p'}}{(1-|z|^2)^n}\lesssim \frac{\left(\theta^{-p'/p}(I_z)\right)^{1/p'}}{(1-|z|^2)^n}.$$
\end{proof}

\begin{prop} \label{prop:necH} Let $g_1,\ldots, g_m\in H^\infty$ and $\theta\in\A_p$. If the operator $\M_g 
:H^p(\theta)\times\cdots\times H^p(\theta)\rightarrow H^p(\theta)$ is onto, then $g=(g_1,\ldots, g_m)$ satisfies  $\inf_{z\in\B}|g(z)|>0$.
\end{prop}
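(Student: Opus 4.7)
The plan is to combine the open mapping theorem with two test-function estimates that are already in place: Proposition \ref{prop:test} (giving an upper bound for $\|(1-w\bar z)^{-N}\|_{H^p(\theta)}$) and Corollary \ref{cor:pointwise2} (a pointwise bound for $H^p(\theta)$ functions in terms of the dual $A_p$ weight). Since $H^p(\theta)$ is a Banach space, the surjectivity of $\M_g$ together with the open mapping theorem yields a constant $C>0$ such that every $f\in H^p(\theta)$ admits a decomposition $f=\sum_{k=1}^m g_k f_k$ with $\sum_k\|f_k\|_{H^p(\theta)}^p\le C\|f\|_{H^p(\theta)}^p$.

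First I would fix $z\in\B$ and pick $N$ large enough so that $Np>\lambda$, where $\lambda$ is the doubling exponent for $\theta$. Apply the surjectivity of $\M_g$ to the test function $f_z(w)=(1-w\bar z)^{-N}$ to obtain functions $\varphi_{1,z},\ldots,\varphi_{m,z}\in H^p(\theta)$ with
\[
\sum_{k=1}^m g_k(w)\varphi_{k,z}(w)=\frac{1}{(1-w\bar z)^N},\qquad \sum_k\|\varphi_{k,z}\|_{H^p(\theta)}^p\lesssim \|f_z\|_{H^p(\theta)}^p\lesssim \frac{\theta(I_z)}{(1-|z|^2)^{Np}}.
\]
Evaluating the identity at $w=z$ and using Cauchy--Schwarz gives
\[
\frac{1}{(1-|z|^2)^N}=\sum_k g_k(z)\varphi_{k,z}(z)\le |g(z)|\,\Bigl(\sum_k|\varphi_{k,z}(z)|^2\Bigr)^{1/2}.
\]

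Next I would bound each $|\varphi_{k,z}(z)|$ by Corollary \ref{cor:pointwise2}, which yields
\[
|\varphi_{k,z}(z)|\lesssim \frac{(\theta^{-p'/p}(I_z))^{1/p'}}{(1-|z|^2)^n}\,\|\varphi_{k,z}\|_{H^p(\theta)}\lesssim \frac{(\theta^{-p'/p}(I_z))^{1/p'}\,\theta(I_z)^{1/p}}{(1-|z|^2)^{n+N}}.
\]
The key cancellation comes from the $A_p$ condition \eqref{eqn:Ap}: the product $\theta(I_z)^{1/p}(\theta^{-p'/p}(I_z))^{1/p'}$ is bounded by a constant times $|I_z|\approx (1-|z|^2)^n$. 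Inserting this into the previous display produces $|\varphi_{k,z}(z)|\lesssim (1-|z|^2)^{-N}$, and substituting back into the lower bound for $|g(z)|$ yields $(1-|z|^2)^{-N}\lesssim |g(z)|(1-|z|^2)^{-N}$, i.e.\ $|g(z)|\gtrsim 1$ with a constant independent of $z$.

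The only subtle point is ensuring that the $A_p$ cancellation dominates all the factors that depend on $z$; once one tracks the exponents correctly this is automatic. No serious obstacle arises, but choosing $N$ sufficiently large (depending only on the doubling exponent $\lambda<np$ given by Proposition \ref{prop:doubling}) is essential to make Proposition \ref{prop:test} applicable.
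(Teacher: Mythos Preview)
Your proof is correct and follows essentially the same route as the paper's: open mapping theorem, the test function $f_z(w)=(1-w\bar z)^{-N}$, the pointwise bound from Corollary \ref{cor:pointwise2}, the norm estimate from Proposition \ref{prop:test}, and the $\A_p$ cancellation. The only cosmetic difference is that you use Cauchy--Schwarz to factor out $|g(z)|$, whereas the paper bounds $\sum_i |f_i(z)||g_i(z)|$ directly to obtain $1\lesssim\sum_i|g_i(z)|$; both conclusions are equivalent.
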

\begin{proof}
 We first observe that by the open map Theorem, for every $f\in H^p(\theta)$, there exists functions $f_i \in H^p(\theta)$, $i=1,\ldots, m$, such that
 \begin{itemize}
 \item[(i)] ${\displaystyle{f=\sum_{i=1}^mf_ig_i}}$.
 \item[(ii)] $\|f_i\|_{H^p(\theta)}\lesssim  \|f\|_{H^p(\theta)}$, $i=1,\ldots,m$.
 \end{itemize}
 If $z\in \B$, let $f_z(w)=\dfrac1{(1-w\overline{z})^N}$, where $N>0$ is to be chosen. We then have that there exist $f_i$, $i=1,\ldots,m$, satisfying conditions (i) and (ii) above.
 Therefore, Corollary \ref{cor:pointwise2} and Proposition \ref{prop:test}, if $N$ is big enough, give
 \begin{align*}
\frac1{(1-|z|^2)^N}=|f_z(z)|&\leq \sum_{i=1}^m |f_i(z)||g_i(z)| \\
&\lesssim  \frac{\left(\theta^{-p'/p}(I_z)\right)^{1/p'}}{(1-|z|^2)^n}
\|f_z\|_{H^p(\theta)}\sum_{i=1}^m |g_i(z)|\\&\lesssim 
\frac{\left(\theta^{-p'/p}(I_z)\right)^{1/p'}}{(1-|z|^2)^n}\frac{\theta(I_z)^{1/p}}{(1-|z|^2)^N} \sum_{i=1}^m |g_i(z)|,
 \end{align*}
 and since $\theta$ is in $\A_p$, we obtain that $\displaystyle{1\lesssim  \sum_{i=1}^m |g_i(z)|}$.
\end{proof}

\subsection{Necessary conditions for Morrey spaces}

The next lemma gives a pointwise estimate for $f\in HM^{p,s}.$

\begin{prop}\label{prop:morreygr} Let $1\le p<\infty,$ $0<s<n/p,$ $f$ in $ HM^{p,s},$ and $z\in B.$ Then
$|f(z)|\lesssim \|f\|_{p,s}(1-|z|^2)^{-s}.$
\end{prop}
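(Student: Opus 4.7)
My plan is to combine the Cauchy--Szeg\H{o} reproducing formula with a dyadic annular decomposition of $\bB$ around $\z=z/|z|$, and then extract the factor $(1-|z|^2)^{-s}$ by summing a geometric series whose convergence is precisely where the hypothesis $s>0$ enters.

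First, by Proposition \ref{prop:embed:MHp} we have $HM^{p,s}\subset H^p$, so $f$ has boundary values $f^*\in L^p(\bB)$ and admits the Cauchy--Szeg\H{o} representation
$$f(z)=\int_{\bB}\frac{f^*(\eta)}{(1-\bar\eta z)^n}\,d\sigma(\eta),$$
which gives the pointwise bound $|f(z)|\le\int_{\bB}|f^*(\eta)|\,|1-\bar\eta z|^{-n}\,d\sigma(\eta)$.

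Writing $\e_0=1-|z|^2$, I would decompose $\bB=A_0\cup\bigcup_{k\ge 1}A_k$ with $A_0=I_{\z,\e_0}$ and $A_k=I_{\z,2^k\e_0}\setminus I_{\z,2^{k-1}\e_0}$, on each of which the standard estimate $|1-\bar\eta z|\approx 2^k\e_0$ is available. Combining H\"older's inequality on each Koranyi ball $I_{\z,2^k\e_0}$ with the Morrey condition \eqref{eqn:Morreynorm2} and the volume estimate $|I_{\z,r}|\approx r^n$ yields
$$\int_{I_{\z,2^k\e_0}}|f^*|\,d\sigma\lesssim(2^k\e_0)^{(n-sp)/p}(2^k\e_0)^{n/p'}\|f\|_{p,s}=(2^k\e_0)^{n-s}\|f\|_{p,s}.$$
Summing over $k$ then gives
$$|f(z)|\lesssim\|f\|_{p,s}\sum_{k\ge 0}\frac{(2^k\e_0)^{n-s}}{(2^k\e_0)^n}=\|f\|_{p,s}\,\e_0^{-s}\sum_{k\ge 0}2^{-ks}\lesssim\|f\|_{p,s}(1-|z|^2)^{-s},$$
the final bound being valid precisely because $s>0$ makes the geometric series converge.

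The only mild concerns are legalising the reproducing formula at $p=1$ (standard, since $f\in H^1$) and handling indices $k$ so large that $2^k\e_0\ge 2$, in which case $I_{\z,2^k\e_0}=\bB$ and the Morrey bound continues to hold (one may simply truncate the sum). Neither is a genuine obstacle, and the essential point is the summability of $\sum 2^{-ks}$, for which $s>0$ is both used and necessary.
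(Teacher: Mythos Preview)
Your argument is correct and follows essentially the same route as the paper's proof: Cauchy integral representation, dyadic annular decomposition of $\bB$ around $\z=z/|z|$, H\"older's inequality combined with the Morrey condition on each ball, and summation of the resulting geometric series in $2^{-ks}$. The only cosmetic differences are your explicit appeal to Proposition \ref{prop:embed:MHp} for the boundary values and your discussion of the edge cases, which the paper leaves implicit.
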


\begin{proof} By the Cauchy formula
$$|f(z)|\le \int_S\frac{|f(\eta)|}{|1-z\bar\eta |^n}d\sigma(\eta).$$

Assume $z\ne 0,$ and let $\z=z/|z|.$ For a positive integer $j$, let $I_j=\{\eta\in \bB: |1-\eta\bar\z|\le 2^{j+1}(1-|z|^2)\}.$

Therefore,
\begin{align*}
|f(z)|&\le \int_{I_1}\frac{|f(\eta)|}{|1-z\bar\eta |^n} d\sigma(\eta) +\sum_{j>1}\int_{I_{j+1}-I_j}\frac{|f(\eta)|}{|1-z\bar\eta |^n}d\sigma(\eta)\\
&\lesssim\sum_{j\ge 1}(2^j(1-|z|^2))^{-n}\int_{I_{j+1}}|f(\eta)|d\sigma(\eta).
\end{align*}

By H\"older's inequality 
$$|f(z)|\lesssim\sum_{j\ge 1}(2^j(1-|z|^2))^{-s}\|f\|_{p,s}\lesssim
(1-|z|^2)^{-s}\|f\|_{p,s}$$
which concludes the proof. 
\end{proof}

\begin{prop} \label{prop:necM} Assume that $g_1,\ldots,g_m\in H^\infty$. If for some $1<p<\infty$ and $0<s<n/p$ the map $\M_g:HM^{p,s}\times\cdots\times HM^{p,s}\rightarrow HM^{p,s}$ is  surjective, then  $\inf\{|g(z)|: z\in \B\}>0.$
\end{prop}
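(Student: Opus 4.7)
The plan is to mimic the argument of Proposition \ref{prop:necH}, replacing the Hardy-space pointwise bound of Corollary \ref{cor:pointwise2} by the Morrey-space pointwise bound already established in Proposition \ref{prop:morreygr}. The open mapping theorem, applied to the surjection $\M_g$, provides for every $f\in HM^{p,s}$ holomorphic functions $f_1,\ldots,f_m\in HM^{p,s}$ with $f=\sum_{i=1}^m f_i g_i$ and $\|f_i\|_{M^{p,s}}\lesssim \|f\|_{M^{p,s}}$. As in the Hardy case I will test this on the reproducing-type functions $f_z(w)=(1-w\bar z)^{-N}$ for $z\in\B$ and a sufficiently large $N$ to be chosen.

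The first step is to control $\|f_z\|_{M^{p,s}}$. Here I would use Proposition \ref{prop:embed:MHp}, which yields $H^{n/s}\subset HM^{p,s}$, together with the classical integral estimate
\begin{equation*}
\|f_z\|_{H^{n/s}}^{n/s}=\int_{\bB}\frac{d\sigma(\eta)}{|1-\eta\bar z|^{Nn/s}}\approx (1-|z|^2)^{n-Nn/s},
\end{equation*}
valid once $N>s$. This gives $\|f_z\|_{M^{p,s}}\lesssim (1-|z|^2)^{s-N}$, hence $\|f_i\|_{M^{p,s}}\lesssim (1-|z|^2)^{s-N}$ for each $i$.

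Next I evaluate at the point $z$ itself. By Proposition \ref{prop:morreygr}, each $f_i$ satisfies
\begin{equation*}
|f_i(z)|\lesssim \|f_i\|_{M^{p,s}}(1-|z|^2)^{-s}\lesssim (1-|z|^2)^{-N}.
\end{equation*}
Combining this with the identity $f_z(z)=\sum_{i=1}^m f_i(z)g_i(z)$ and $|f_z(z)|=(1-|z|^2)^{-N}$ yields
\begin{equation*}
(1-|z|^2)^{-N}\leq \sum_{i=1}^m |f_i(z)||g_i(z)|\lesssim (1-|z|^2)^{-N}\sum_{i=1}^m |g_i(z)|,
\end{equation*}
so that $1\lesssim \sum_{i=1}^m |g_i(z)|$ uniformly in $z\in\B$, which is the desired lower bound on $|g(z)|$.

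There is no real obstacle here: the hypothesis $0<s<n/p$ (in particular $s>0$) is used only to guarantee the embedding $H^{n/s}\subset HM^{p,s}$ of Proposition \ref{prop:embed:MHp} and the pointwise bound of Proposition \ref{prop:morreygr}, and the choice of $N$ is essentially free once $N>s$. The main thing to check is that the exponents match so that the factors $(1-|z|^2)^{\pm N}$ cancel out and only the growth order $(1-|z|^2)^{-s}$ from Proposition \ref{prop:morreygr} is balanced against the norm gain $(1-|z|^2)^{s-N}$; this is an easy verification.
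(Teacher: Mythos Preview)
Your proof is correct and follows essentially the same approach as the paper's own argument: both use the open mapping theorem, test with $f_z(w)=(1-w\bar z)^{-N}$ for $N>s$, bound $\|f_z\|_{M^{p,s}}$ via the embedding $H^{n/s}\subset HM^{p,s}$ of Proposition~\ref{prop:embed:MHp}, and close with the pointwise estimate of Proposition~\ref{prop:morreygr}. The exponent bookkeeping you describe matches the paper's exactly.
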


\begin{proof} 
By the open map Theorem, for every function $\,f\,$ in $HM^{p,s}.$
there exist functions $\,f_1,\ldots,f_m\,$ in $HM^{p,s}$, such that
$$\sum_{k=1}^m\,g_k f_k\,=f\qquad \text{and}\qquad 
\|f_k\|_{M^{p,s}}\,\lesssim\,\|f\|_{M^{p,s}}.
$$

By Proposition \ref{prop:morreygr}
\begin{equation}\label{eqn:nec1}
|f(z)|\,\le\,\sum_{i=1}^m\,|f_i(z)|\,|g_i(z)|\,\lesssim\, 
\|f\|_{M^{p,s}}(1-|z|^2)^{-s}\,\sum_{i=1}^m\,|g_i(z)|.
\end{equation}
For $N>s$, consider the function $\,f_z(w)\,=\,(1-w\bar z )^{-N}$. Since, by Proposition \ref{prop:embed:MHp}, $H^{n/s}\subset HM^{p,s}$, we have 
$$\|f_z\|_{M^{p,s}}\lesssim \|f_z\|_{H^{n/s}}\approx (1-|z|^2)^{s-N}.$$

Therefore, by \eqref{eqn:nec1} 
\begin{align*}
(1-|z|^2)^{-N}=f_z(z)&\lesssim \|f_z\|_{M^{p,s}}(1-|z|^2)^{-s}\,\sum_{i=1}^m\,|g_i(z)|\\
&\lesssim (1-|z|^2)^{-N}\,\sum_{i=1}^m\,|g_i(z)|,
\end{align*}
which proves the result.
\end{proof}

\section{The $H^2(\theta)$-corona theorem for 2 generators.} \label{sect:corona2}

Throughout this section we will assume that the functions $g_1,g_2\in H^\infty$ satisfy 
$\inf_{z\in \B}|g(z)|>0$. 

We want to prove that the operator  $\M_g$ defined by $\M_g(f_1,f_2)=g_1f_1+g_2f_2$
 maps $H^2(\theta)\times H^2(\theta)$ onto $H^2(\theta)$ for all the weights $\theta \in \A_2$.

Let $g=(g_1,g_2)$ and let $G=(G_1,G_2)$ where $G_j=\dfrac{\bar g_j}{|g|^2},\,j=1,2$.
An easy computation proves that
\begin{equation}\label{eqn:omega1}
\bar\p G_1=-g_2\Omega,\quad \bar \p G_2=g_1\Omega,
\end{equation}
where
\begin{equation}\label{eqn:omega}
\Omega=\frac{\overline{\,\, g_1 \p g_2- g_2 \p  g_1\,\,}}{|g|^4}=G_1\bar\p G_2-G_2\bar\p G_1.
\end{equation}
Clearly $g_1G_1+g_2G_2=1$, $\bar\p \Omega=0$ and $\|G f\|_{L^2(\theta)}\lesssim \|f\|_{H^2(\theta)}$. 

Since the functions $G_jf$ are not holomorphic on $\B$, we must correct them by using a solution of a $\bar\p$ problem. Since $\bar\p (\Omega f)=0$ for any $f\in H^2(\theta)$, we will choose a suitable  integral operator $\K$ such that $\bar\p \K(\Omega f)=\Omega f $  and such that the linear operator 
\begin{equation}\label{eqn:coronaT2}
\T_g(f)=G f+ g^{\perp}\K(\Omega f), \quad g^\perp=(g_2,-g_1)
\end{equation}
maps   $H^2(\theta)$ to $H^2(\theta)\times H^2(\theta)$.

It is clear by construction that the components of $\T_g(f)$ are holomorphic functions on $\B$ and that $\M_g(\T_g(f))=f$.

In order to choose a suitable operator $\K$, let 
\begin{equation*}
\K^N_0(w,z)=\,\sum_{k=0}^{n-1}\,c_{k,N}\frac{(1-|w|^2)^{N+k}( s\land (\bar\p_w\, s)^{n-1-k})(w,z)}{(1-z\bar w )^{n+N}(1-w \bar z)^{n-k}}\, \land (\gamma(w))^k
\end{equation*}
where $\bar\p=\bar\p_w$ (differential respect $w$), $\gamma(w)=\bar\p\,\dfrac{\p|w|^2}{1-|w|^2}$ and 
$s(w,z)=(1- w \bar z)\p|w|^2 -(1-|w|^2)\p_w(w\bar z)$.

It is well-known that the corresponding integral operators associated to these kernels, also denoted by $\K_0^N$, that is, if $\vartheta$ is  $(0,1)$-form,
$$\K_0^N(\vartheta)(z)=\int_{\B} \K_0^N(w,z) \vartheta(w)d\nu(w),$$  solve the $\bar\p$-equation or the $\bar\p_b$-equation in the unit ball of $\C^n$ (see for instance   \cite{Sko} or \cite{Char})

The following proposition gives the main properties of these operators. In particular  it gives a decomposition of  $\K_0^N(\vartheta)$ as a sum of two functions. The first one is an antiholomorphic function on $\B$, and the other term involves $\p\vartheta$. The main advantatge of this last term is that if $\vartheta$ is the form $\Omega$ defined in \ref{eqn:omega}, then, by Proposition \ref{prop:CarHp2}, we obtain expressions like  $\Theta(z)|\p\Omega(z)|^2(1-|z|^2)d\nu(z)$ or  $\Theta(z)|\p\Omega(z)\land\p|z|^2\land\bar\p|z|^2|^2d\nu(z)$ that are Carleson measures for $H^2(\theta)$, and that will play an important rol in the calculus of the estimates.
 
\begin{prop} \label{prop:KN1} Let $\vartheta$ be a $(0,1)$-form with coefficients in $\mathcal{C}^1(\bar\B)$. Then, for each positive integer $N$, there exist integral operators $\Q_{0}^{N,1}$ and $\Q_{0}^{N,2}$ satisfying the following properties.
\begin{enumerate}
	\item \label{item:KN12} $\K_0^N(\vartheta)=\Q_{0}^{N,1}(\vartheta)+\Q_{0}^{N,2}(\p\vartheta)$.
	\item \label{item:KN11}$\bar\p\K^N_0(\vartheta)=\vartheta$ if $\bar\p \vartheta=0$.
	\item \label{item:KN13} The function $\Q_{0}^{N,1}(\vartheta)(z)$ is antiholomorphic on $\B$ and for $\z\in \bB$ $\displaystyle{\Q_{0}^{N,1}(\vartheta)(\z)=c\sum_{k=1}^{n+N}\int_{\B}\frac{(1-|w|^2)^N\vartheta(w) \land\p(w\bar \z)\land(\p\bar\p|w|^2)^{n-1}}{(1-w \bar \z)^k}}$.
	\item \label{item:KN14} For $\z\in\bB$,   
\begin{align*}|\Q_{0}^{N,2}(\p\vartheta)(\z)|&\lesssim \int_{\B} \frac{(1-|w|^2)^{N+1}|\p\vartheta(w)|}{|1-w\bar \z|^{n+N}}d\nu(w)\\ &\quad +\int_{\B}\frac{(1-|w|^2)^{N}|\p\vartheta(w)\land\p|w|^2\land\bar\p|w|^2|}{|1-w\bar \z|^{n+N}}d\nu(w).
\end{align*}
\end{enumerate}
\end{prop}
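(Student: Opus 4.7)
The proof naturally splits into establishing (ii) via standard weighted Koppelman theory, followed by a careful decomposition of $\K_0^N(w,z)$ in $w$ that simultaneously yields (i), (iii) and (iv).

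For (ii), the operator $\K_0^N$ is precisely the Cauchy--Fantappie solution operator associated with the Cauchy--Leray section $s(w,z)$ and the Berndtsson weight $(1-|w|^2)^N$. The weighted Koppelman formula on the ball reads
\[
\vartheta(z) = \bar\p_z \K_0^N(\vartheta)(z) + \K_0^N(\bar\p \vartheta)(z) - B^N(\vartheta)(z),
\]
where the boundary contribution $B^N$ vanishes identically because of the factor $(1-|w|^2)^{N+k}$ with $N\geq 1$. This gives (ii); the derivation is essentially the content of Charpentier's Lemma~I.1 and Skoda's paper, both cited above.

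For (i), (iii) and (iv), the plan is to analyze the $z$-dependence of $\K_0^N(w,z)$. Among its factors, only $(1-z\bar w)^{-(n+N)}$ is holomorphic in $z$: the denominator $(1-w\bar z)^{-(n-k)}$ is antiholomorphic in $z$, and both $s(w,z) = (1-w\bar z)\p|w|^2 - (1-|w|^2)\bar z \cdot dw$ and $\bar\p_w s$ depend on $z$ only through $\bar z$. To extract the antiholomorphic-in-$z$ part, I would expand the offending factor $(1-z\bar w)^{-(n+N)}$ iteratively as a sum of $(1-w\bar z)^{-j}$-pieces (each antiholomorphic in $z$) plus an exact $w$-form, via the algebraic identities used in Charpentier's derivation. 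Applying Stokes' theorem on $\B$, the boundary term vanishes thanks to $(1-|w|^2)^{N+k}$, and the interior residue, paired with the $(0,1)$-form $\vartheta$, retains only $\p\vartheta$: the bidegree in $\bar w$ is already saturated by $(\bar\p s)^{n-1-k}\wedge \gamma^k$, so $\bar\p\vartheta$ pairs to zero. This yields (i), with $\Q_{0}^{N,1}(\vartheta)$ collecting the antiholomorphic pieces and $\Q_{0}^{N,2}(\p\vartheta)$ collecting the Stokes residues. Specializing $z=\z\in\bB$ and collapsing the form products via $\gamma = \bar\p(\p|w|^2/(1-|w|^2))$ then telescopes the expression into the explicit sum over $k=1,\dots,n+N$ of $(1-w\bar\z)^{-k}$-kernels with factor $\p(w\bar\z)\wedge(\p\bar\p|w|^2)^{n-1}$, establishing (iii). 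For (iv), the residue kernel splits into two regimes: generic terms carry the weight $(1-|w|^2)^{N+1}|1-w\bar\z|^{-(n+N)}$ paired with $|\p\vartheta|$, while terms where the tangential components of $s$ and $\bar\p s$ are both forced out carry the weight $(1-|w|^2)^{N}|1-w\bar\z|^{-(n+N)}$ paired with the smaller quantity $|\p\vartheta\wedge\p|w|^2\wedge\bar\p|w|^2|$.

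The main obstacle is the algebraic bookkeeping in the decomposition step: the kernel is a sum over $k$ of several form products, and one must identify precisely which pieces give exact $w$-forms with holomorphic-in-$z$ primitive versus which are antiholomorphic in $z$ from the outset. The Stokes procedure must be carried out uniformly in $k$, and the tangential versus normal bidegree structure of the residue must be tracked with care to obtain the sharper second estimate in (iv) rather than only the crude bound coming from $|s|\lesssim |1-w\bar z|$. The calculations are essentially those of Charpentier's Lemma~I.1 refined to keep track of the extra tangential gain, and the explicit formulas then fall out once the decomposition is organized correctly.
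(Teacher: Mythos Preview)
The paper does not give its own proof of this proposition: it simply records that (i), (iii), (iv) are in Andersson \cite{An2} and that (ii) is in Skoda \cite{Sko} and Charpentier \cite{Char}. Your sketch is therefore being compared to a citation, not to an argument in the paper, and on that level your outline is consistent with what those references do: (ii) is indeed the weighted Koppelman identity with vanishing boundary term, and the decomposition (i)--(iv) does come from writing the kernel as an antiholomorphic-in-$z$ piece plus a $\p_w$-exact piece and integrating by parts so that only $\p\vartheta$ survives by bidegree.

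The one place where your description is thinner than the actual argument is the sentence ``expand the offending factor $(1-z\bar w)^{-(n+N)}$ iteratively as a sum of $(1-w\bar z)^{-j}$-pieces plus an exact $w$-form.'' This is the crux, and it is not a purely algebraic identity on that scalar factor alone: the $\p_w$-exactness only appears once the factor is combined with the full differential form $s\wedge(\bar\p_w s)^{n-1-k}\wedge\gamma^k$, and the mechanism in \cite{An2} goes through the $\p\bar\p$-minimal-solution formalism rather than through Charpentier's estimates (which you invoke for this step). In other words, the exact primitive $L(w,z)$ with $\K_0^N = \K_0^{N,\text{anti}} + \p_w L$ is produced by Andersson's construction of the $\p\bar\p$-solving kernel, not by iterating a scalar identity of the type $\frac{1}{1-z\bar w}=\frac{1}{1-w\bar z}+\cdots$. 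Your bidegree argument for why $\bar\p\vartheta$ drops out and your account of how the tangential refinement in (iv) arises are correct in spirit; just be aware that the clean boundary formula in (iii), with the single factor $\p(w\bar\z)\wedge(\p\bar\p|w|^2)^{n-1}$ and the sum over $k=1,\dots,n+N$, is a specific output of Andersson's computation and would require you to reproduce that computation rather than Charpentier's.
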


The proof of assertions \eqref{item:KN12}, \eqref{item:KN13} and \eqref{item:KN14} can be found in \cite{An2}. Assertion \eqref{item:KN12} is proved in  \cite{Sko} and \cite{Char}.

We want to prove  that for $N>0$ big enough, $\T_g^N$ maps $H^2(\theta)$ to $H^2(\theta)\times H^2(\theta)$,  that is $\|\T_g^N(f)\|_{L^2(\theta)}\lesssim \|f\|_{H^2(\theta)}$  with a constant depending of $n,N$, $g$ and $\theta$.

Since $|\T_g^N(f)|\le |G||f|+|g^\perp||\K^N(\Omega f)|\le \|G\|_\infty|f|+\|g\|_\infty|\K^N(\Omega f)|$,  
 we only need to  prove that for  $N>0$ large enough we have the estimate $\|\K^N(\Omega f)\|_{ L^2(\theta)}\lesssim \|f\|_{H^2(\theta)}$.

 Since $\K^N_0(\Omega f)=\Q_{0}^{N,1}(\Omega f) + \Q_{0}^{N,2}(\p(\Omega f))$, we will need the following estimates of $\Omega f$ and of $\p(\Omega f)$.

\begin{lem} \label{lem:estomeg1} Let $\Omega$ as in \eqref{eqn:omega}. Then  
\begin{align*}
|(\Omega f)(w)|&\lesssim |\p g(w)||f(w)|\\
|\p(\Omega f)(w)|&\lesssim |\p g(w)|^2|f(w)|+|\p g(w)||\p f(w)|.\\
|\p(\Omega f)(w)\land\p|w|^2\land\bar\p|w|^2|&\lesssim |\p_T g(w)|^2|f(w)|+|\p_T g(w)||\p_T f(w)|.
\end{align*}
\end{lem}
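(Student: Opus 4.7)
The plan is to exploit the hypothesis $\inf_{z\in\B}|g(z)|>0$, which makes $|g|^{-1}$ bounded on $\B$, so that every factor $|g|^{-k}$ arising in the calculations absorbs into the implicit constants. Under this convention, the first bound is immediate: from $\Omega=\overline{g_1\p g_2-g_2\p g_1}/|g|^4$ and $|g_j|\leq|g|$ one reads off $|\Omega|\lesssim|\p g|/|g|^3\lesssim|\p g|$, so $|(\Omega f)(w)|\lesssim|\p g(w)||f(w)|$.

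For the second inequality I would apply the product rule $\p(\Omega f)=f\,\p\Omega+\p f\land\Omega$; the second summand is already bounded by $|\p g||\p f|$, so the task reduces to estimating $\p\Omega$. Here the alternative expression $\Omega=G_1\bar\p G_2-G_2\bar\p G_1$ from \eqref{eqn:omega} is convenient, since $\p^2=0$ gives $\p\Omega=\p G_1\land\bar\p G_2-\p G_2\land\bar\p G_1$. A direct differentiation of $G_j=\bar g_j/|g|^2$, using that $g_j$ is holomorphic, produces $\p G_j=-\bar g_j(\bar g_1\p g_1+\bar g_2\p g_2)/|g|^4$ and a conjugate formula for $\bar\p G_j$; both are pointwise $\lesssim|\p g|$. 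Hence $|\p\Omega|\lesssim|\p g|^2$, which combined with the previous term yields the claimed bound on $|\p(\Omega f)|$.

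The third estimate is the delicate one, since the right-hand side involves only the complex-tangential derivatives. The decisive observation is that by the very definition of the tangential norm in Section \ref{sect:prelim}, $|\p\varphi\land\p|w|^2|=|\p_T\varphi|$, and the analogous identity holds after complex conjugation. Applied to the explicit formulas above, the combinations $\bar g_i\p g_k$ appearing in $\p G_j$, their conjugates appearing in $\bar\p G_j$, and $g_i\p g_k-g_k\p g_i$ appearing in $\Omega$, contribute only their tangential parts when wedged with $\p|w|^2$ or $\bar\p|w|^2$, giving
\[
|\p G_j\land\p|w|^2|\lesssim|\p_T g|,\quad |\bar\p G_j\land\bar\p|w|^2|\lesssim|\p_T g|,\quad |\Omega\land\bar\p|w|^2|\lesssim|\p_T g|.
\]
Splitting $\p(\Omega f)\land\p|w|^2\land\bar\p|w|^2$ by the product rule and reordering the wedge factors (the signs picked up by anticommuting $1$-forms are harmless in absolute value), the $\p f$-summand becomes $(\p f\land\p|w|^2)\land(\Omega\land\bar\p|w|^2)$, of norm $\lesssim|\p_T f||\p_T g|$; the $f\,\p\Omega$-summand decomposes into pieces $f\,(\p G_i\land\p|w|^2)\land(\bar\p G_j\land\bar\p|w|^2)$, each $\lesssim|f||\p_T g|^2$. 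The only real difficulty is the bookkeeping of wedge-product reorderings and the verification that the algebraic manipulations genuinely produce the claimed $D_{i,j}g$-factors; no further analytic input beyond the two ingredients above is needed.
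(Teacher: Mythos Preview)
Your approach is essentially the same as the paper's very terse proof, which also invokes the definition of $\Omega$ together with the identities $|\overline{\p g_k}\land\bar\p|w|^2|=|\p_T g_k|$ and $|\p f\land\p|w|^2|=|\p_T f|$.

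There is, however, a slip in your second paragraph: the claim that ``$\p^2=0$ gives $\p\Omega=\p G_1\land\bar\p G_2-\p G_2\land\bar\p G_1$'' is not justified. What $\p^2=0$ would kill is $\p\p G_j$, but here the relevant terms are $G_1\p\bar\p G_2-G_2\p\bar\p G_1$, and $\p\bar\p G_j$ does not vanish since $G_j$ is neither holomorphic nor pluriharmonic. Fortunately this does not damage the conclusion: a direct computation shows that $\p\bar\p G_j$ is a combination of forms of the type $(\text{bounded})\cdot\p g_k\land\overline{\p g_l}$, so $|\p\bar\p G_j|\lesssim|\p g|^2$ and $|\p\bar\p G_j\land\p|w|^2\land\bar\p|w|^2|\lesssim|\p_T g|^2$, and your argument then goes through unchanged. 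A cleaner route is to compute $\p\Omega$ from the first expression in \eqref{eqn:omega}: the numerator $\overline{g_1\p g_2-g_2\p g_1}$ is a $(0,1)$-form with antiholomorphic coefficients, hence annihilated by $\p$, and one obtains directly
\[
\p\Omega=\p(|g|^{-4})\land\overline{g_1\p g_2-g_2\p g_1}
=-2|g|^{-6}(\bar g_1\p g_1+\bar g_2\p g_2)\land\overline{g_1\p g_2-g_2\p g_1},
\]
which visibly has the structure $(\text{holomorphic }(1,0))\land(\text{antiholomorphic }(0,1))$ required for both the radial and the tangential estimates.
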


\begin{proof} All the above estimates follows from the definition of $\Omega$ and the formulas
$|\overline{\p  g_k(w)}\land \bar\p|w|^2|= |\p_T g_k(w)|$ and  $|\p f(w)\land \p|w|^2|= |\p_T f(w)|$
\end{proof}

We will obtain the estimates of the norm of  $\K^N_0(\Omega f)$ in $L^2(\theta)$ by duality. We will need the following lemmas. 

\begin{lem} \label{lem:estKwf0}
 If $N>0$, then 
$$
|\K^N_0(\Omega f)(\z)|
\lesssim |\Q_0^N(\Omega f)(\z)+\L^{N+1}_{n+N,0}(W)(\z),$$
where
\begin{align*}
W(w)&=(1-|w|^2)\left(|\p g(w)|^2|f(w)|+|\p g(w)||\p f(w)|\right)\\
&\quad+|\p_T g(w)|^2|f(w)|+|\p_T g(w)||\p_T f(w)|.\end{align*}
\end{lem}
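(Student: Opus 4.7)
The plan is to combine the decomposition of $\K_0^N$ from Proposition \ref{prop:KN1}\eqref{item:KN12} with the pointwise estimates of Lemma \ref{lem:estomeg1}. I read the symbol $\Q_0^N$ in the statement as shorthand for the antiholomorphic piece $\Q_0^{N,1}$ from Proposition \ref{prop:KN1}, so that what remains to show is a pointwise bound on $\Q_0^{N,2}(\partial(\Omega f))$ by $\L^{N+1}_{n+N,0}(W)(\z)$.

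First, I would apply Proposition \ref{prop:KN1}\eqref{item:KN12} to the $(0,1)$-form $\vartheta=\Omega f$, obtaining
$$
\K_0^N(\Omega f)=\Q_0^{N,1}(\Omega f)+\Q_0^{N,2}(\partial(\Omega f)).
$$
Next, part \eqref{item:KN14} of the same proposition bounds $|\Q_0^{N,2}(\partial(\Omega f))(\z)|$ by the sum of two integrals, one with integrand $(1-|w|^2)^{N+1}|\partial(\Omega f)(w)|/|1-w\bar\z|^{n+N}$ and one with $(1-|w|^2)^N|\partial(\Omega f)(w)\wedge\partial|w|^2\wedge\bar\partial|w|^2|/|1-w\bar\z|^{n+N}$. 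Into the first integrand I substitute the bound $|\partial(\Omega f)|\lesssim |\partial g|^2|f|+|\partial g||\partial f|$ from Lemma \ref{lem:estomeg1}, and into the second the tangential bound $|\partial(\Omega f)\wedge\partial|w|^2\wedge\bar\partial|w|^2|\lesssim |\partial_T g|^2|f|+|\partial_T g||\partial_T f|$ from the same lemma.

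Factoring out a common $(1-|w|^2)^N/|1-w\bar\z|^{n+N}$, the two integrands add up to exactly $(1-|w|^2)^N W(w)/|1-w\bar\z|^{n+N}$: the extra $(1-|w|^2)$ from the first integral is absorbed by the $(1-|w|^2)$ prefactor on the non-tangential part of $W$, while the tangential part of $W$ matches the second integrand directly. Since setting $L=0$ in the kernel $\L^{N+1}_{n+N,0}(w,z)$ eliminates the $\phi$ factor and reduces it to $(1-|w|^2)^N/|1-z\bar w|^{n+N}$, the total is precisely $\L^{N+1}_{n+N,0}(W)(\z)$, yielding the claimed estimate.

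The entire argument is essentially bookkeeping; the main place where one could slip is in matching the power $(1-|w|^2)^{N+1}$ from the first integral of part \eqref{item:KN14} with the $(1-|w|^2)$ prefactor attached to the non-tangential portion of $W$. This is an algebraic check rather than an analytic difficulty, and it is the only step in the argument that requires any attention beyond invoking the two results cited above.
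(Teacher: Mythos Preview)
Your proof is correct and is exactly the argument the paper intends: its own proof is the single line ``The proof is a consequence of Proposition \ref{prop:KN1} and Lemma \ref{lem:estomeg1},'' and you have simply written out that bookkeeping in full, including the correct reading of $\Q_0^N$ as $\Q_0^{N,1}$.
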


\begin{proof} The proof is a consequence of  Proposition \ref{prop:KN1} and Lemma \ref{lem:estomeg1}.
\end{proof}

\begin{lem} \label{lem:estKwf} Let $\psi$ be a continuous function on $\bB$.
 If $N>n$, then 
\begin{align*}
&\left|\int_{\bB} \K^N_0(\Omega f)\,\psi d\sigma\right|\\
&\lesssim 
\sum_{k=1}^{n+N}\int_{\B} |\p g(w)||f(w)|(1-|w|^2)^N\left|\int_{\bB}\frac{\psi(\z)}{(1-w \bar \z)^k}d\sigma(\z)\right|d\nu(w)\\
 &+\int_{\B}(1-|w|^2)\left(|\p g(w)|^2|f(w)|+|\p g(w)||\p f(w)|\right)\mathbb{P}(|\psi|)(w)d\nu(w)\\
 &+\int_{\B}\left(|\p_T g(w)|^2|f(w)|+|\p_T g(w)||\p_T f(w)|\right)\mathbb{P}(|\psi|)(w)d\nu(w).
\end{align*}
\end{lem}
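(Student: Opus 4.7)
The plan is to split $\K_0^N(\Omega f) = \Q_0^{N,1}(\Omega f) + \Q_0^{N,2}(\p(\Omega f))$ as in Proposition \ref{prop:KN1}\eqref{item:KN12} and control the pairing of each piece against $\psi$ separately, using Fubini to move the boundary integration inside and invoking the pointwise bounds of Lemma \ref{lem:estomeg1} only at the end.

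For the antiholomorphic piece, I would substitute the explicit boundary formula of Proposition \ref{prop:KN1}\eqref{item:KN13} into $\int_{\bB}\Q_0^{N,1}(\Omega f)\,\psi\,d\sigma$ and interchange the order of integration. Expanding $\p(w\bar\z) = \sum_j \bar\z_j\,dw_j$ and treating $(\p\bar\p|w|^2)^{n-1}$ as a bounded multiple of the Euclidean volume element, the pairing decomposes into a finite sum over $k = 1,\dots,n+N$ of terms in which the inner factor is $\int_{\bB}\psi(\z)\bar\z_j/(1-w\bar\z)^k\,d\sigma(\z)$. Since $|\bar\z_j|\le 1$, taking moduli absorbs these harmless bounded factors and reduces the inner integral (up to a constant) to the shape appearing in the statement. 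Plugging in the estimate $|\Omega f|(w)\lesssim |\p g(w)||f(w)|$ from Lemma \ref{lem:estomeg1} then yields the first summand of the claim.

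For the $\Q_0^{N,2}$ piece, I would combine the pointwise majorization of Proposition \ref{prop:KN1}\eqref{item:KN14} with Fubini, arriving at an inner boundary integral of the form $\int_{\bB}|\psi(\z)|/|1-w\bar\z|^{n+N}\,d\sigma(\z)$. The decisive algebraic step is the factorization
$$\frac{1}{|1-w\bar\z|^{n+N}} = \frac{1}{|1-w\bar\z|^{2n}}\cdot\frac{1}{|1-w\bar\z|^{N-n}},$$
together with the elementary estimate $|1-w\bar\z|\ge (1-|w|^2)/2$ applied to the second factor. The hypothesis $N>n$ is precisely what makes the exponent $N-n$ positive so that the crude bound goes in the favorable direction, giving
$$\int_{\bB}\frac{|\psi(\z)|}{|1-w\bar\z|^{n+N}}\,d\sigma(\z)\;\lesssim\;\frac{\mathbb{P}(|\psi|)(w)}{(1-|w|^2)^N}.$$
Inserting this together with the estimates $|\p(\Omega f)|\lesssim |\p g|^2|f|+|\p g||\p f|$ and $|\p(\Omega f)\land\p|w|^2\land\bar\p|w|^2|\lesssim |\p_T g|^2|f|+|\p_T g||\p_T f|$ from Lemma \ref{lem:estomeg1}, and cancelling the obvious powers of $(1-|w|^2)^N$ against the $(1-|w|^2)^{N+1}$ and $(1-|w|^2)^N$ weights in Proposition \ref{prop:KN1}\eqref{item:KN14}, produces the remaining two summands.

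The main obstacle is purely bookkeeping: keeping the differential-form degrees straight in the antiholomorphic piece and carrying out the exponent arithmetic correctly in the reduction to the Poisson–Szegő kernel; no analytic input is needed beyond Proposition \ref{prop:KN1}, Lemma \ref{lem:estomeg1}, and the elementary pointwise bound on $|1-w\bar\z|$.
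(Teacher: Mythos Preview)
Your approach matches the paper's exactly: the paper's one-line proof cites Lemma~\ref{lem:estKwf0} (which just packages the splitting \eqref{item:KN12} together with the pointwise bounds \eqref{item:KN14} and Lemma~\ref{lem:estomeg1}), Fubini, and the inequality $1-|w|^2\le 2|1-\z\bar w|$. Your treatment of the $\Q_0^{N,2}$ piece---factoring $|1-w\bar\z|^{-(n+N)}=|1-w\bar\z|^{-2n}|1-w\bar\z|^{-(N-n)}$ and using $N>n$ to convert the second factor into $(1-|w|^2)^{-(N-n)}$---is correct and is precisely the intended argument.

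There is one slip in your handling of $\Q_0^{N,1}$. After Fubini the inner boundary integral is $\int_{\bB}\bar\z_j\psi(\z)(1-w\bar\z)^{-k}\,d\sigma(\z)$, and you write that ``since $|\bar\z_j|\le 1$, taking moduli absorbs these harmless bounded factors''. That reasoning is invalid: the modulus in the target estimate sits \emph{outside} the inner integral, so a pointwise bound on the integrand does not let you drop $\bar\z_j$. (Take $\psi(\z)=\z_j$ and $w=0$: then $\int_{\bB}\bar\z_j\psi\,d\sigma=1/n$ while $\int_{\bB}\psi\,d\sigma=0$.) The clean fix is not to remove $\bar\z_j$ at all but to carry it through, so that the first summand reads with $\bar\z_j\psi$ in place of $\psi$ and an extra finite sum over $j=1,\dots,n$. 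This is harmless for the only application, Proposition~\ref{prop:corona2}, because Lemma~\ref{lem:estKwl1} applies verbatim to $\bar\z_j\psi$ and $\|\bar\z_j\psi\|_{L^2(\theta)}\le\|\psi\|_{L^2(\theta)}$. The paper's terse proof does not spell this point out either, so the imprecision is arguably already in the lemma's statement rather than in your route to it.
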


\begin{proof} The proof is a consequence of  Lemma \ref{lem:estKwf0}, Fubini's Theorem and the estimate $1-|w|^2\le 2|1-\z\bar w|$.
\end{proof}

\begin{lem} \label{lem:estKwl1} If $\theta\in \A_2(\bB)$, then for any positive integer $N$ we have
$$\sum_{k=1}^{n+N}\int_{\B} \Theta(w) (1-|w|^2)^{2N-1}\left|\int_{\bB}\frac{\psi(\z)}{(1-w \bar \z)^k}d\sigma(\z)\right|^2d\nu(w)\lesssim \|\psi\|_{L^2(\theta)}^2.$$
\end{lem}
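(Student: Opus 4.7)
The plan is to bound each summand individually by $\|\psi\|_{L^2(\theta)}^2$. Write
$F_k(\psi)(w):=\int_{\bB}\psi(\z)(1-w\bar\z)^{-k}\,d\sigma(\z)$, so that the $k$-th term is $\int_{\B}\Theta(w)(1-|w|^2)^{2N-1}|F_k(\psi)(w)|^2\,d\nu(w)$. The strategy rests on two observations: for every integer $N\ge 1$, the measure $d\mu_N(w):=\Theta(w)(1-|w|^2)^{2N-1}d\nu(w)$ is a Carleson measure for $H^2(\theta)$; and $F_k(\psi)$ can be identified either with a finite combination of Cauchy-type projections of $\psi$ (when $1\le k\le n$) or with the differential operator $\Ra_n^{k-n}$ applied to $\mathcal{C}\psi$ (when $n<k\le n+N$).

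I would first verify the Carleson claim. By Proposition \ref{prop:CarHp1} it suffices to show $\mu_N(\hat I_z)\lesssim \theta(I_z)$ for every $z\in\B$. For $w\in\hat I_z$ a standard comparison yields $I_w\subset cI_z$, so doubling of $\theta$ gives $\theta(I_w)\lesssim \theta(I_z)$ and hence $\Theta(w)\lesssim \theta(I_z)/(1-|w|^2)^n$. Integrating in polar coordinates on the tent produces
\[
\mu_N(\hat I_z)\lesssim \theta(I_z)\int_{\hat I_z}(1-|w|^2)^{2N-1-n}d\nu(w)\approx \theta(I_z)(1-|z|^2)^{2N}\le \theta(I_z),
\]
where the convergence of the radial integral uses $2N-1>-1$, i.e.\ $N\ge 1$.

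For $1\le k\le n$, expanding $(1-w\bar\z)^{n-k}$ in the integrand by the binomial theorem gives
\[
F_k(\psi)(w)=\sum_{|\alpha|\le n-k}c_\alpha\,w^\alpha\,\mathcal{C}(\psi\bar\z^\alpha)(w).
\]
Since $|\bar\z^\alpha|\le 1$ on $\bB$, the boundedness of $\mathcal{C}\colon L^2(\theta)\to H^2(\theta)$ from Proposition \ref{prop:Approperties} together with the invariance of $H^2(\theta)$ under multiplication by the $H^\infty$ monomials $w^\alpha$ yields $\|F_k(\psi)\|_{H^2(\theta)}\lesssim \|\psi\|_{L^2(\theta)}$, and testing $F_k(\psi)$ against the Carleson measure $\mu_N$ closes this range. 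For $n<k\le n+N$, the elementary identity $(m\I+\Ra_w)[(1-w\bar\z)^{-m}]=m(1-w\bar\z)^{-(m+1)}$ iterates to $(1-w\bar\z)^{-k}=\Ra_n^{k-n}[(1-w\bar\z)^{-n}]$, hence $F_k(\psi)=\Ra_n^{k-n}(\mathcal{C}\psi)$. Proposition \ref{prop:charhp1}, together with the remark allowing $(\I+\Ra)^m$ to be replaced by $\Ra_l^m$, then gives
\[
\int_{\B}\Theta(w)(1-|w|^2)^{2(k-n)-1}|F_k(\psi)(w)|^2\,d\nu(w)\approx \|\mathcal{C}\psi\|_{H^2(\theta)}^2\lesssim \|\psi\|_{L^2(\theta)}^2.
\]
The hypothesis $k\le n+N$ forces $2N-1\ge 2(k-n)-1$, so $(1-|w|^2)^{2N-1}\le (1-|w|^2)^{2(k-n)-1}$ and the bound transfers. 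Summing over the $n+N$ terms completes the proof.

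The main technical point is the Carleson verification in the first step; it is precisely the exponent $2N-1$ (and the assumption $N\ge 1$) that makes the tent integral converge with the right power of $(1-|z|^2)$. Once that is in place the argument is a clean reduction, using the $L^2(\theta)$-boundedness of $\mathcal{C}$ for $k\le n$ and the derivative characterization of $H^2(\theta)$ for the higher $k$.
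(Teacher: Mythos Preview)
Your proof follows essentially the same two-case split as the paper (large $k$ via $F_k=\Ra_n^{k-n}\mathcal{C}(\psi)$ and Proposition~\ref{prop:charhp1}; small $k$ by first showing $F_k(\psi)\in H^2(\theta)$), with two pleasant variations: for $1\le k\le n$ you use the elementary binomial identity $(1-w\bar\z)^{-k}=\sum_{|\alpha|\le n-k}c_\alpha\, w^\alpha\bar\z^\alpha(1-w\bar\z)^{-n}$ to reduce to Cauchy projections, whereas the paper uses $\mathcal{C}(\psi)=\Ra_k^{n-k}F_k(\psi)$ and the boundedness of $(\Ra_k^{n-k})^{-1}$ on $H^2(\theta)$; and you make the Carleson step explicit rather than folding it into the citation of Proposition~\ref{prop:charhp1}.

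There is, however, a small slip in your Carleson verification. After the crude pointwise bound $\Theta(w)\lesssim \theta(I_z)(1-|w|^2)^{-n}$ you are left with $\int_{\hat I_z}(1-|w|^2)^{2N-1-n}d\nu(w)$, and the radial integral here converges only when $2N-1-n>-1$, i.e.\ $2N>n$, \emph{not} when $2N-1>-1$ as you wrote; since the paper works in dimension $n>1$, your displayed estimate actually fails for $N\le n/2$. The Carleson claim itself is nonetheless true for every $N\ge1$: instead of bounding $\Theta$ pointwise, write $\Theta(w)\approx(1-|w|^2)^{-n}\int_{\bB}\chi_{\Gamma_\eta}(w)\,\theta(\eta)\,d\sigma(\eta)$ and apply Fubini to get
\[
\mu_N(\hat I_z)\lesssim\int_{CI_z}\theta(\eta)\left(\int_{\Gamma_\eta\cap\{1-|w|^2\lesssim 1-|z|^2\}}(1-|w|^2)^{2N-1-n}\,d\nu(w)\right)d\sigma(\eta);
\]
the inner cone integral is $\approx(1-|z|^2)^{2N}$ precisely under your intended condition $2N-1>-1$, and integrating in $\eta$ then gives $\theta(I_z)(1-|z|^2)^{2N}\le\theta(I_z)$. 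With this adjustment (or simply noting that the lemma is only invoked later with $N>n$, where your computation is valid as written) your argument is complete.
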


\begin{proof} Observe that $\displaystyle{\|\mathcal{C}(\psi)\|_{ H^2(\theta)}=\left\|\int_{\bB} \dfrac{\psi(\z)}{(1-w\bar\z)^{n}}d\sigma(\z)\right\|_{ H^2(\theta)} \lesssim  \|\psi\|_{ L^2(\theta)}}$. If $n\le k\leq n+N$, then
$$\int_{\bB}\frac{\psi(\z)}{(1-w \bar \z)^k}d\sigma(\z) =
\Ra_n^{k-n}\mathcal{C}(\psi)(z).
$$ 
(see \eqref{eqn:Rlk} for the definition of $\Ra_n^{k-n}$).
Therefore, the result is a consequence of Proposition \ref{prop:charhp1}.

In order to prove the case $1\le k<n$, observe that 
$$\mathcal{C}(\psi)(z) =
\Ra_k^{n-k}\int_{\bB}\frac{\psi(\z)}{(1-w \bar \z)^k}d\sigma(\z).$$
Therefore,
$$\left\|\int_{\bB}\frac{\psi(\z)}{(1-w \bar \z)^k}d\sigma(\z)\right\|_{H^2(\theta)}\lesssim \|\mathcal{C}(\psi)\|_{H^2(\theta)}
$$
and by Proposition \ref{prop:charhp1} we conclude the proof.
\end{proof}

\begin{prop} \label{prop:corona2} If $N>n$ and $\theta\in\A_2$, then $\|\K^N_0(\Omega f)\|_{ L^2(\theta)}\lesssim \|f\|_{H^2(\theta)}$.
\end{prop}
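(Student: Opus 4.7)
The plan is to argue by duality. Since $\theta\in\A_2$ forces $\theta':=\theta^{-1}\in\A_2$, the dual of $L^2(\theta)$ under the pairing $\langle F,\psi\rangle=\int_{\bB}F\bar\psi\,d\sigma$ is $L^2(\theta')$, so it suffices to show
$$\left|\int_{\bB}\K_0^N(\Omega f)\,\bar\psi\,d\sigma\right|\lesssim \|f\|_{H^2(\theta)}\,\|\psi\|_{L^2(\theta')}$$
for every $\psi\in L^2(\theta')$. Lemma \ref{lem:estKwf} decomposes this pairing as a sum $I_1+I_2+I_3$ of three nonnegative integrals over $\B$, and I would bound each separately by Cauchy--Schwarz in $L^2(d\nu)$, in each case inserting $1=\Theta(w)^{1/2}\Theta(w)^{-1/2}$ and distributing the powers of $(1-|w|^2)^{1/2}$. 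The essential auxiliary fact is that the $\A_2$-condition forces $\Theta(w)\Theta'(w)\approx 1$, hence $\Theta(w)^{-1}\approx\Theta'(w)$; this lets each split produce a $\Theta$-weighted factor controlled by $\|f\|_{H^2(\theta)}$ and a $\Theta'$-weighted factor controlled by $\|\psi\|_{L^2(\theta')}$.

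For the first piece $I_1$, whose $k$-th summand is
$$\int_{\B}|\p g(w)||f(w)|(1-|w|^2)^N\Bigl|\int_{\bB}\frac{\psi(\z)}{(1-w\bar\z)^k}\,d\sigma(\z)\Bigr|\,d\nu(w),$$
I would factor the integrand as $\Theta^{1/2}(1-|w|^2)^{1/2}|\p g||f|$ times $\Theta^{-1/2}(1-|w|^2)^{N-1/2}$ times the Cauchy-type integral in $\psi$. Cauchy--Schwarz then yields a product of $\|f\|_{H^2(\theta)}$, by Proposition \ref{prop:CarHp2} (since $|\p g|^2(1-|w|^2)\Theta\,d\nu$ is a Carleson measure for $H^2(\theta)$), and a quantity whose square, summed over $k$, is bounded by $\|\psi\|_{L^2(\theta')}^2$ via Lemma \ref{lem:estKwl1} applied to $\theta'\in\A_2$ (after replacing $\Theta^{-1}$ by $\Theta'$).

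For $I_2$ and $I_3$, the Cauchy-type integral is replaced by the Poisson integral $\mathbb{P}(|\psi|)(w)$. In the \emph{pure $g$} summands containing $|\p g|^2|f|$ or $|\p_T g|^2|f|$, I would split $\Theta^{1/2}\cdot\Theta^{-1/2}$ so that the $f$-side matches Proposition \ref{prop:CarHp2} for the weight $\theta$ while the $\mathbb{P}(|\psi|)$-side matches Proposition \ref{lem:estKw12} for the weight $\theta'$. In the \emph{mixed} summands involving $|\p g||\p f|$ or $|\p_T g||\p_T f|$, one derivative is transferred from $g$ to $f$: the $f$-side is then controlled by Proposition \ref{prop:charhp}, which bounds $\int_{\B}(|\p_T f|^2+(1-|w|^2)|\p f|^2)\Theta\,d\nu$ by $\|f\|_{H^2(\theta)}^2$, while the $\mathbb{P}(|\psi|)$-side is still a Carleson measure for $H^2(\theta')$ and so is again covered by Proposition \ref{lem:estKw12}.

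The main obstacle is essentially bookkeeping: across the six summands of $I_1+I_2+I_3$, one must distribute $\Theta^{\pm 1/2}$ and the powers of $(1-|w|^2)^{1/2}$ in exactly the right way so that both factors produced by Cauchy--Schwarz are recognizable as standard Carleson or tangential-derivative quantities. Once this is arranged, the proof collapses into a direct application of Lemma \ref{lem:estKwl1} together with Propositions \ref{prop:CarHp2}, \ref{prop:charhp}, and \ref{lem:estKw12}.
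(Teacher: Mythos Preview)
Your proposal is correct and follows essentially the same route as the paper: argue by duality against $\psi\in L^2(\theta')$, invoke Lemma~\ref{lem:estKwf} to split the pairing into the antiholomorphic piece and the Poisson pieces, then apply Cauchy--Schwarz with the insertion $\Theta^{1/2}\Theta'^{1/2}\approx 1$ so that the $f$-factors are handled by Propositions~\ref{prop:CarHp2} and~\ref{prop:charhp} while the $\psi$-factors are handled by Lemma~\ref{lem:estKwl1} (applied to $\theta'$) and Proposition~\ref{lem:estKw12}. The paper packages the Carleson factors into the single measure $d\mu_{g,\theta}$, but the content is identical.
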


\begin{proof} Let $\theta'=\theta^{-1}\in \mathcal{A}_2$ and let $\Theta$ and $\Theta'$ be the corresponding  averages of $\theta$ and $\theta'$. 
Let also 
$$d\mu_{g,\theta}(z)=\Theta(z)\left( (1-|z|^2)|\p g(z)|^2+|\p_T g(z)|^2\right)d\nu(z).$$
We recall that by Proposition \ref{lem:estKw12},  $\mu_{g,\theta}$ is a Carleson measure for $H^2(\theta)$.

Let
$$\Psi(w)=\sum_{k=1}^{n+N}  (1-|w|^2)^{N-1/2}\left|\int_{\bB}\frac{\psi(\z)\theta(\z)}{(1-w \bar \z)^k}d\sigma(\z)\right|.$$

By Lemma \ref{lem:estKwf}, H\"older's Inequality and the fact that $\Theta(z)^\frac12\Theta'(z)^\frac12\approx 1$, we have 
\begin{align*}
&\int_{\bB} \K^{N}_0(\Omega f)\psi \,d\sigma
=\left|\int_{\bB} \left(\Q_{0}^{N,1}(\Omega f)+\Q_{0}^{N,2}(\p(\Omega f))\right)\,\psi  d\sigma\right|\\
&\lesssim \left(\int_{\B} |f|^2d\mu_{g,\theta}\right)^{1/2}\left(\int_{\B}\Theta'|\Psi|^2 d\nu\right)^{1/2}\\
&\,\,+\left(\int_{\B}|f|^2d\mu_{g,\theta}\right)^{1/2}\left(\int_{\B}|\mathbb{P}(\psi)|^2d\mu_{g,\theta'}\right)^{1/2}\\
&\,\,+\left(\int_{\B}((1-|w|^2)|\p f|^2+|\p_T f|^2)\Theta d\nu\right)^{1/2}\left(\int_{\B}|\mathbb{P}(\psi)|^2d\mu_{g,\theta'}\right)^{1/2}.
\end{align*}

Therefore, Propositions \ref{prop:charhp} and \ref{lem:estKw12}, and Lemma \ref{lem:estKwl1} give that 
$$\left|\int_{\bB} \K_{0}^{N}(\p(\Omega f))\,\psi  d\sigma\right|\lesssim \|f\|_{H^2(\theta)}\|\psi\|_{L^2(\theta')}$$
which concludes the proof.
\end{proof}

As a consequence of the above proposition we have:
\begin{thm} \label{corh22} Let $g_1,g_2\in H^\infty$ satisfying $\inf\{|g(z)|:\,z\in \B\}>0\}$. If $N>n$, then  $\T_g^N(f)=Gf-g^\perp\K_0^N(\Omega f)$ is a bounded operator from $H^2(\theta)$ in $H^2(\theta)\times H^2(\theta)$ for any $\theta\in\A_2$.
\end{thm}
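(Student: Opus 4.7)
The plan is to read this theorem as a straightforward consolidation of Proposition~\ref{prop:corona2} with the algebraic setup laid out at the start of the section. First, I would split $\T_g^N(f)$ into the two summands $Gf$ and $g^\perp\K_0^N(\Omega f)$ and control their $L^2(\theta)$ norms on $\bB$ separately. Since $\inf_\B|g|>0$, the functions $G_j=\bar g_j/|g|^2$ are uniformly bounded on $\B$, so
\[\|Gf\|_{L^2(\theta)}\le\|G\|_\infty\|f\|_{L^2(\theta)}\lesssim\|f\|_{H^2(\theta)}.\]
Because $g_1,g_2\in H^\infty$, the coordinates of $g^\perp=(g_2,-g_1)$ lie in $L^\infty(\bB)$, and Proposition~\ref{prop:corona2} (which needs $N>n$) yields
\[\|g^\perp\K_0^N(\Omega f)\|_{L^2(\theta)}\le\|g\|_\infty\|\K_0^N(\Omega f)\|_{L^2(\theta)}\lesssim\|f\|_{H^2(\theta)}.\]

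Second, I would verify that each coordinate of $\T_g^N(f)$ is holomorphic on $\B$. This reduces to a one-line $\bar\p$-calculation: the identities $\bar\p G_1=-g_2\Omega$ and $\bar\p G_2=g_1\Omega$ from \eqref{eqn:omega1}, together with the holomorphy of $f,g_1,g_2$ and the fact that $\bar\p\K_0^N(\Omega f)=\Omega f$ by Proposition~\ref{prop:KN1}\eqref{item:KN11} (applicable because $\bar\p\Omega=0$ forces $\bar\p(\Omega f)=0$), give
\[\bar\p\bigl(G_1 f+g_2\K_0^N(\Omega f)\bigr)=-g_2\Omega f+g_2\Omega f=0,\]
and symmetrically for the other coordinate.

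Third, since $\T_g^N(f)$ is holomorphic on $\B$ and its non-tangential boundary values lie in $L^2(\theta)$, the identification of such a function with the Cauchy projection of its boundary trace (cf.\ Proposition~\ref{prop:Approperties} and the opening of the proof of Proposition~\ref{prop:CarHp1}) produces the bound $\|\T_g^N(f)\|_{H^2(\theta)\times H^2(\theta)}\lesssim\|f\|_{H^2(\theta)}$. The relation $\M_g(\T_g^N(f))=f$ is then purely algebraic, following from $g_1G_1+g_2G_2=1$ and from the exact cancellation $g_1g_2-g_2g_1=0$ in the $\K_0^N$-terms. No genuine obstacle remains here; all of the analysis sits in Proposition~\ref{prop:corona2}, and the only mildly delicate point is the passage from $L^2(\theta)$ control on boundary values to $H^2(\theta)$ membership of the holomorphic extension, which is standard for Muckenhoupt weights.
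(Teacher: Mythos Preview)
Your proposal is correct and follows exactly the approach the paper intends: the paper presents this theorem as an immediate consequence of Proposition~\ref{prop:corona2}, having already noted earlier in the section that the components of $\T_g(f)$ are ``holomorphic by construction'' and that $|\T_g^N(f)|\le\|G\|_\infty|f|+\|g\|_\infty|\K_0^N(\Omega f)|$. You have simply made these steps explicit (including the $\bar\p$-cancellation and the passage from $L^2(\theta)$ boundary control to $H^2(\theta)$ membership), which the paper leaves to the reader.
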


\section{The $H^2(\theta)$-corona problem for $m$ generators.} \label{sect:coronam}

 It is a well known fact that one way to prove the corona problem with $m$ generators is based in a successive resolution of several $\bar\p$ problems and a useful reformulation of the problem can be obtained by means of the so called the Koszul complex. We will use the formula  in Theorem 3.1 in \cite{An-Car3}, which gives in a sintetic way this composition. However, instead of obtaining properties of the solutions of each operator involved in such expression, we will rather obtain an estimate of the operator that solves the corona problem in each $H^2(\theta)$ for any weight $\theta\in\A_2$. The extrapolation theorem we have already cited in the introduction allows to deduce the general case for any $H^p(\theta)$ and any $\theta$ in $\A_p$. 

\subsection{The Koszul complex} 
Let $E=\{e_1,...,e_m\}$ be a basis in $\C^m$ and let $E^*$ be the corresponding dual basis.
We denote by $\Lambda^l=\Lambda^l(E)$ the elements $e_I=e_{i_1}\sqcap \ldots \sqcap e_{i_l}$ where $I=\{i_1,\ldots,i_l\}$, of degree $l$ of the exterior algebra  $\Lambda=\Lambda(E)$.  In order to avoid confusions, we use $\sqcap$ to denote the exterior multiplication in $\Lambda$ and $\land$ to denote the exterior product of differential forms. If $v^*\in E^*$,  $\delta_{v^*}:\Lambda^{l+1}\rightarrow \Lambda^l$ denotes the anti-derivation  defined by 
$$
\delta_{v^*}(e_{i_1}\sqcap\ldots \sqcap e_{i_l})=\sum_{j=1}^l(-1)^{j-1}v_j 
e_{i_1}\sqcap\ldots \sqcap e_{i_{j-l}}\sqcap e_{i_{j+l}}\ldots \sqcap e_{i_l}. 
$$

Let $\mathcal{E}_q$ denote the space of $(0,q)$-forms with coefficients in $\mathcal{C}^\infty(\bar \B)$ and $\mathcal{E}=\cup_{q=0}^n\mathcal{E}_q$.

We also consider the space $\mathcal{E}_q(\Lambda)$  of $\Lambda$ valued forms 
$$\sum_{I} \eta_I e_I ,\qquad \eta_I\in \mathcal{E}_q,$$
and the union of these spaces $\displaystyle{\mathcal{E}(\Lambda)=\cup_{q=0}^n\mathcal{E}_q(\Lambda)}$. 

We will use similar notations to consider other $\Lambda$-valued spaces of functions. For instance, $H^2(\theta,\Lambda)$ consists of  sums of  $h_I(z)e_I$ with $h_I\in H^2(\theta)$. 

For $F=\sum_{I} \eta_I e_I,\,H=\sum_{J} \vartheta_J e_J\in  \mathcal{E}(\Lambda)$, we let $$F\sqcap G=\sum_{I,J}\eta_I\land \vartheta_J\,e_I\sqcap e_J.$$

If $\mathcal{K}:\mathcal{E}\rightarrow \mathcal{E}$ is a linear operator, we will also use  $\mathcal{K}$ to denote the operator defined on $\mathcal{E}(\Lambda)$ by  $\mathcal{K}(\eta_I\,e_I)=\mathcal{K}(\eta_I)\,e_I$. If $h^*=\sum_{j=1}^m h_j(z)\sqcap e_j*\in \mathcal{E}_0((\Lambda^*)^1)$ (that is $h_j\in \mathcal{C}^\infty(\bar B)$), let $\delta_{h^*}(\eta_Ie_I) =\eta_I\sqcap\delta_{h^*}(e_I)=\sum_{j=1}^m h_j\eta_I\delta_{e_j^*}e_I$.
We denote by 
$\delta_{h^*}\mathcal{K}:\mathcal{E}(\Lambda)\rightarrow \mathcal{E}(\Lambda)$ the composition of $\mathcal{K}$ and $\delta_{h^*}$, that is  $(\delta_{h^*}\mathcal{K})(\eta_I e_I)=\mathcal{K}(\eta)\sqcap\delta_{h^*}(e_I)=
\sum_{j=1}^m h_j\mathcal{K}(\eta_I)\delta_{e_j^*}e_I$.

Using these notations, let us give an explicit formula to solve the corona problem. Observe that 
$\sum_{j=1}^m g_j F_j=f$ can be written as $\delta_{g^*} F=f$, where $g^*=\sum_{j=1}^m g_j(z) e_j^*$ and  $F=\sum_{j=1}^m F_j(z) e_j$.

Let us recall some integral operators $\K:\mathcal{E}\rightarrow \mathcal{E}$  satisfying $\bar\p \K(\eta)=\eta$ for any $(0,q+1)$-form satisfying $\bar\p \eta=0$. 

For $\,N\ge 0,\,$ consider the kernel
\begin{equation*}
\K^N(w,z)=\,\sum_{k=0}^{n-1}\,c_{k,N}\frac{(1-|w|^2)^{N+k}}{(1-\bar
w z)^{N+k}}\,\frac{( s\land (\bar\p\, s)^{n-1-k})(w,z)}{\phi^{n-k}(w,z)}\land
(\gamma(w))^k,
\end{equation*}
where $\bar\p=\bar\p_w+\bar\p_z$ ($\p$ in both variables $w$ and $z$), and 
\begin{equation}\label{eqn:sectKN} \begin{split}
\gamma(w)&=\bar\p\,\dfrac{\p|w|^2}{1-|w|^2}=\frac{\bar\p\p |w|^2}{1-|w|^2}+\frac{\bar\p|w|^2\land \p |w|^2}{(1-|w|^2)^2}\\
 s(w,z)&=(1- w \bar z)\p|w|^2 -(1-|w|^2)\p(w\bar z)\\
&=(1- w \bar z)\p(|w|^2 -w\bar z)+(|w|^2-w\bar z)\p(w\bar z)\\
\phi(w,z)&=|1-\bar w z|^2-(1-|w|^2)(1-|z|^2)\\
&=|(w-z)\bar w|^2+(1-|w|^2)|w-z|^2
\end{split}\end{equation}

Let $\K^N=\sum_{q=0}^{n-1}\K^N_q$ where $\K^N_q$ denotes the component in $\K^N$ of bidegree
$(0,q)$ in $z$ and $(n,n-q-1)$ in $w$. If $q=0$, then  
$\K_0^N(w,z)$  coincides with the kernel in Proposition \ref{prop:KN1}.

Formulas \eqref{eqn:sectKN} together with
\begin{align*}
\bar\p_w s(w,z)&=(1-w\bar z)\bar\p_w\p_w|w|^2-\p_w(w\bar z)\land\bar\p_w|w|^2\\
\bar\p_z s(w,z))&=-\bar\p_z(w\bar z)\land\p_w|w|^2-(1-|w|^2)\bar\p_z\p_w(w\bar z)\\
&=\bar\p_z(|z|^2-w\bar z)\land\p_w|w|^2-(1-|w|^2)\bar\p_z\p_w(w\bar z)\\
&\qquad -\bar\p_z |z|^2\land\p_w|w|^2,\\
\bar\p_w (w\bar z)\land\p_w|w|^2
&=\p_w (w\bar z-|w|^2)\land\p_w|w|^2,
\end{align*}  
give (see p. 69 \cite{Cu}) the following decomposition of $\K_q^N(w,z)$.
\begin{lem} \label{lem:decomK}
\begin{equation}\label{eqn:decomK}
\K_q^N(w,z)=\K_q^{N,1}(w,z)+\K_q^{N,2}(w,z)\land \bar\p|w|^2+\K_q^{N,3}(w,z)\land \bar\p|z|^2,
\end{equation}
with the folowing estimates:
 \begin{equation}\label{eqn:estKN3}\begin{split}
 |\K_q^{N,1}(w,z)|&\lesssim \L^{N+1}_{N+1-n,n-1/2}(w,z)\\
 |\K_q^{N,2}(w,z)|,\,|\K_q^{N,3}(w,z)|&\lesssim \L^{N+1/2}_{N+1-n,n-1/2}(w,z)
 \end{split}\end{equation}
  (we recall that the definition of the operators $\L^{N}_{M,L}$ is given in Definition \ref{def:operators})
\end{lem}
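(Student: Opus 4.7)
The lemma is an algebraic decomposition of the kernel $\K_q^N$ followed by routine size estimates. The plan proceeds as follows.

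First I would carry out the algebraic expansion. Each of the three ingredients $\gamma(w)$, $\bar\p_w s$ and $\bar\p_z s$, as expressed in the formulas displayed just before the lemma, is a sum of two pieces: a ``regular'' piece carrying no factor of $\bar\p|w|^2$ or $\bar\p|z|^2$, and a ``singular'' piece carrying exactly one such factor. Expanding the wedge product $s\land(\bar\p s)^{n-1-k}\land\gamma(w)^k$ as a sum of monomials and using the nilpotency relations $\bar\p|w|^2\land\bar\p|w|^2=0$ and $\bar\p|z|^2\land\bar\p|z|^2=0$, every surviving monomial carries at most one factor of $\bar\p|w|^2$ and at most one of $\bar\p|z|^2$. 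I would then define $\K_q^{N,1}$ as the sum of the monomials carrying neither factor, $\K_q^{N,2}\land\bar\p|w|^2$ as the sum of those carrying $\bar\p|w|^2$ (placing monomials that happen to carry both factors here and absorbing the residual $\bar\p|z|^2$ into $\K_q^{N,2}$), and $\K_q^{N,3}\land\bar\p|z|^2$ as the remaining monomials. After reordering the wedge factors this yields the decomposition \eqref{eqn:decomK}.

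Second, for the pointwise estimates, the essential ingredient is the classical Henkin inequality $|s(w,z)|^2\lesssim|1-w\bar z|\,\phi(w,z)$, combined with the elementary bound $\phi(w,z)\le|1-w\bar z|^2$. For each monomial contributing to $\K_q^{N,1}$, the regular pieces of $\bar\p s$ are bounded by $|1-w\bar z|$ and the regular piece of $\gamma(w)$ by $(1-|w|^2)^{-1}$; substituting into the prefactor $(1-|w|^2)^{N+k}/((1-w\bar z)^{N+k}\phi^{n-k})$ and summing over $k=0,\ldots,n-1$ produces the bound by $\L^{N+1}_{N+1-n,n-1/2}(w,z)$. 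For $\K_q^{N,2}$ and $\K_q^{N,3}$, replacing one regular piece by its singular counterpart and stripping off the explicit $\bar\p|w|^2$ or $\bar\p|z|^2$ factor (which is $O(1)$) costs a controlled extra singularity at $|w|=1$, and redistributing powers via $|s|^2\lesssim|1-w\bar z|\phi$ leaves a net loss of exactly $(1-|w|^2)^{-1/2}$, accounting for the difference between the exponents $N+1$ and $N+1/2$.

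I expect the main obstacle to be the exponent bookkeeping in this second step: several different pieces of the wedge product could independently supply the pulled-out $\bar\p|w|^2$ or $\bar\p|z|^2$ factor (from $\gamma(w)$, from $\bar\p_w s$ or from $\bar\p_z s$), and in each case one must verify that the cumulative singularity loss relative to the all-regular estimate is no worse than $(1-|w|^2)^{-1/2}$. This is a finite case analysis, handled by repeated use of the Henkin inequality $|s|^2\lesssim|1-w\bar z|\phi$ together with $\phi\le|1-w\bar z|^2$, and is the sort of computation carried out in \cite{Cu, Char}.
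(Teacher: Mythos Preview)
Your proposal is correct and follows essentially the same route as the paper. The paper does not give a self-contained proof of this lemma: it merely records the explicit formulas for $\gamma(w)$, $s(w,z)$, $\bar\p_w s$, $\bar\p_z s$ and the identity $\bar\p_w(w\bar z)\land\p_w|w|^2=\p_w(w\bar z-|w|^2)\land\p_w|w|^2$, and then refers the reader to p.~69 of \cite{Cu} for the resulting decomposition and estimates; your plan---expand the wedge product, sort monomials by their $\bar\p|w|^2$ and $\bar\p|z|^2$ content, and bound each piece via $|s|^2\lesssim|1-w\bar z|\,\phi$ and $\phi\le|1-w\bar z|^2$---is exactly the computation behind that citation.
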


Note that, if $q=0$, then $\K_0^N$ does not contain  terms $d\bar z_j$, and therefore $\K^{N,3}_0=0$. Analogously, $\K^{N,2}_{n-1}=0$.

If $\z\in\bB$, then $\phi(w,\z)=|1-\z\bar w|^2$ and 
\begin{equation}\label{eqn:estKN3bB}
|\K_0^{N,1}(w,\z)|\lesssim \L^{N+1}_{n+N,0}(w,\z),\qquad |\K_0^{N,2}(w,\z)|\lesssim \L^{N+1/2}_{n+N,0}(w,\z).
\end{equation}

Observe that by \eqref{eqn:estKN3}, $|\K_q^{N,1}|$ is bounded by a kernel of type 1, and that $|\K_q^{N,2}|$ and $|\K_q^{N,3}|$ are bounded by  kernels of type 1/2. Therefore, $|\K_q^{N}|$ is globally  bounded by a kernel of type 1/2.

\vspace{.2cm}
Now, given $g=(g_1,\ldots, g_m)\in H^\infty$, satisfying  $\inf_{z\in \B}|g(z)|>0$, let $G_j=\dfrac{\bar g_j}{|g|^2}$ and let $G=\sum_{j=1}^m G_j(z) e_j$. Clearly, $\delta_{g^*} (G)=g\,G=1$. 

Then, we will use the following formula which provides solutions of the corona problem on Hardy spaces.

\begin{thm}[\cite{An-Car3}] \label{thm:Koszul} If  $g=(g_1,\ldots, g_m)\in H^\infty$ satisfies  $\inf_{z\in \B}|g(z)|>0$, then the linear operator 
\begin{equation}\label{eqn:Koszul}
\T_g^N(f)=\sum_{k=0}^{\min(n,m-1)} (-1)^k\left(\delta_{g^*}\K^N\right)^k \left(fG\sqcap (\bar \p G)^k\right),
\end{equation}
maps $H^p$ to  $H^p(\Lambda^1)$,\,$1\le p<\infty$, and $\delta_{g^*}(\T_g^N(f))=f$.
\end{thm}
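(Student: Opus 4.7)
The plan is to establish two facts: the algebraic identity $\delta_{g^*}(\T_g^N(f))=f$ together with the holomorphy of every component of $\T_g^N(f)$ on $\B$, and the analytic bound $\|\T_g^N(f)\|_{H^p(\Lambda^1)}\lesssim\|f\|_{H^p}$. For the algebraic identity, since $\delta_{g^*}$ is an odd derivation with $\delta_{g^*}^2=0$ and the outermost operator in every summand with $k\ge 1$ is $\delta_{g^*}$, a second application of $\delta_{g^*}$ kills those summands; only the $k=0$ term contributes and yields $\delta_{g^*}(fG)=f\sum g_jG_j=f$. For the holomorphy on $\B$, I would use the homotopy identity $\bar\p\K^N\eta+\K^N\bar\p\eta=\eta$ together with $\bar\p(fG\sqcap(\bar\p G)^k)=f(\bar\p G)^{k+1}$ and the anti-derivation formula $\delta_{g^*}(G\sqcap(\bar\p G)^k)=(\bar\p G)^k$ (the latter relies on $\delta_{g^*}(\bar\p G)=\bar\p(\delta_{g^*}G)=0$). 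A direct bookkeeping then shows that $\bar\p$ of the $k$-th summand produces two contributions: one cancels $\bar\p$ of the $(k-1)$-th summand, the other is precisely the obstruction that the $(k+1)$-th summand is built to remove. The chain terminates at $k=\min(n,m-1)$ for dimensional reasons: either $(0,k)$-forms in $\C^n$ vanish for $k>n$, or $G\sqcap(\bar\p G)^k\in\Lambda^{k+1}$ vanishes when $k\ge m$.

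For the boundedness I would handle $p=2$ with $\theta\in\A_2$ first and then extend to general $p$. The strategy mimics the two-generator argument of Section \ref{sect:corona2}: each factor $\bar\p G$ is pointwise bounded by $|\p g|/|g|^2\lesssim|\p g|$, so $(1-|w|^2)|\p G|^{2}\,d\nu+|\p_T G|^2\,d\nu$ is a Carleson measure for $H^2(\theta)$ by Proposition \ref{prop:CarHp2}. I would estimate the $k$-th summand $T_k$ by duality against $\psi\in L^2(\theta')$, splitting the kernel $\K^N$ via Lemma \ref{lem:decomK} into the ``type $1$'' piece bounded by $\L^{N+1}_{N+1-n,n-1/2}$ and the two ``type $1/2$'' pieces carrying $\bar\p|w|^2$ or $\bar\p|z|^2$. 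The boundary trace of each intermediate form is controlled by Fubini and the Cauchy-type bound of Lemma \ref{lem:estKwl1}, while the interior factors of $\bar\p G$ are paired with $|f|^2$ on one side and $|\mathbb{P}(|\psi|)|^2$ on the other through the Carleson measure estimate of Proposition \ref{lem:estKw12} applied to both $\mu_{g,\theta}$ and $\mu_{g,\theta'}$. The passage to arbitrary $1\le p<\infty$ then follows from the Rubio de Francia extrapolation theorem invoked at the end of the paper for the analogous step of Theorem \ref{thm:HpMorrey}.

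The main obstacle is tracking the estimates through the $k$ iterations of $\delta_{g^*}\K^N$. After the first application of $\K^N$ the intermediate form is no longer $\bar\p$-closed in general, and each subsequent composition introduces another factor of $\bar\p G$, i.e.\ another derivative of $g$ competing with the kernel. Choosing $N$ large enough relative to $n$, $m$ and the doubling exponent of $\theta$ keeps enough radial decay $(1-|w|^2)^N$ to absorb these derivatives via Proposition \ref{prop:CarHp2} at each step, but arranging the bookkeeping so that all $k$ factors of $\bar\p G$ are simultaneously accounted for, on both the $f$-side and the $\psi$-side of the duality pairing, is the genuinely technical content.
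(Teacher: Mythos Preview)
Your algebraic part ($\delta_{g^*}\T_g^N f=f$) is correct and matches the paper. For holomorphy, however, the paper does \emph{not} argue by telescoping with the full homotopy identity $\bar\p\K^N+\K^N\bar\p=\mathrm{Id}$. Instead it sets $r=\min(n,m-1)$, $V_k=fG\sqcap(\bar\p G)^k$, and defines $U_r=V_r$, $U_k=V_k-(\delta_{g^*}\K^N)(U_{k+1})$ by downward induction, so that $U_0=\T_g^N f$. The point is that each $U_k$ is $\bar\p$-closed: $\bar\p U_r=0$ either by bidegree ($r=n$) or because $\delta_{g^*}\bar\p G=0$ forces $(\bar\p G)^m=0$ ($r=m-1$); and if $\bar\p U_{k+1}=0$ then $\bar\p(\delta_{g^*}\K^N U_{k+1})=\delta_{g^*}U_{k+1}=\delta_{g^*}V_{k+1}=(\bar\p G)^{k+1}=\bar\p V_k$. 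This uses only the property $\bar\p\K^N\eta=\eta$ for $\bar\p$-closed $\eta$, and neatly sidesteps the difficulty you flag yourself, namely that after the first application of $\K^N$ the intermediate forms are not $\bar\p$-closed. Your telescoping route requires the full Koppelman formula for $\K^N$ (which these kernels do satisfy), so it works, but the paper's repackaging into the $U_k$ is more economical.

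For the $H^p$ estimate the paper gives no proof at all here: Theorem \ref{thm:Koszul} is quoted from \cite{An-Car3}, and the paper only ``recalls'' the holomorphy argument for completeness. Your proposal to bootstrap from $H^2(\theta)$ for all $\theta\in\A_2$ and then extrapolate is exactly the strategy the paper deploys \emph{afterwards} (Theorem \ref{thm:corH2} and Theorem \ref{thm:extrap}) to get the weighted result; specializing $\theta\equiv 1$ would indeed recover unweighted $H^p$. Two remarks, though. First, Rubio de Francia extrapolation gives only $1<p<\infty$, so your claimed range ``$1\le p<\infty$'' is not reached this way; the endpoint $p=1$ genuinely needs the Andersson--Carlsson argument. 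Second, your description of the $k\ge2$ bookkeeping is too vague: in the paper this is handled not by iterated duality pairings as you sketch, but by a pointwise estimate (Proposition \ref{prop:estcoef}) obtained from the decomposition of $G\sqcap(\bar\p G)^k$ in Lemma \ref{lem:decompforms} together with a composition lemma for the kernels $\L^N_{M,L}$ (Lemma \ref{lem:estcomposK}), which reduces the whole iterated operator to a single integral against the Poisson--Szeg\H{o} kernel. That is the concrete device replacing your ``arranging the bookkeeping'' step.
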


 In order to facilitate the reading of this paper, we will give the explicit computations of $\T_g^N(f)$ for $m=2$ and $m=3$, and $n\ge 3$. 

If $m=2$, then formula \eqref{eqn:Koszul} coincides with the one of Section \ref{sect:corona2}. In order to prove this, observe that by bidegree reasons, the term $k=1$ in \eqref{eqn:Koszul} is 
\begin{align*} &(\delta_{g^*}\K^N)\left(f(G_1 e_1+G_2 e_2)\sqcap(\bar\p G_1 e_1+\bar\p G_2 e_2)\right) \\
&\qquad=(\delta_{g^*}\K^N_0)\left(f(G_1 e_1+G_2 e_2)\sqcap(\bar\p G_1 e_1+\bar\p G_2 e_2)\right)
\end{align*}
and that 
\begin{align*} 
(\delta_{g^*}\K^N_0)&\left(f(G_1 e_1+G_2 e_2)\sqcap(\bar\p G_1 e_1+\bar\p G_2 e_2)\right)\\
&=(\delta_{g^*}\K^N_0)\left(f(G_1\bar\p G_2 -G_2\bar\p G_1)e_1\sqcap e_2\right)\\
&=\K^N_0(fG_1\bar\p G_2 - fG_2\bar\p G_1)(g_1 e_2- g_2 e_1)
\end{align*}

Following the notations of Section \ref{sect:corona2}, by \eqref{eqn:omega} we have $fG_1\bar\p G_2 - fG_2\bar\p G_1=f\Omega$ and  therefore
$$\T_g^N(f)=(fG_1+g_2\K^N_0(f\Omega)) e_1+(fG_2-g_1\K^N_0(f\Omega)) e_2,$$
which coincides with \eqref{eqn:coronaT2}.

If $m=3$, then similar computations prove that  the term $k=1$ in \eqref{eqn:Koszul} is 
\begin{align*}
(\delta_{g^*}\K^N_0)(G\sqcap\bar\p G)&
=\K^N_0(fG_1\bar\p G_2 - fG_2\bar\p G_1)(g_1 e_2- g_2 e_1)\\
&+\K^N_0(fG_2\bar\p G_3 - fG_3\bar\p G_2)(g_2 e_3- g_3 e_2)\\
&+\K^N_0(fG_3\bar\p G_1 - fG_1\bar\p G_3)(g_3 e_1- g_1 e_3)
\end{align*}
Now, if $\Omega_{i,j}=\left|\begin{matrix} G_i&G_j\\\bar\p G_i&\bar\p G_j\end{matrix}\right|=G_i\bar\p G_j-G_j\bar\p G_i$, then 
\begin{equation} \label{eqn:deltaK2}\begin{split}
(\delta_{g^*}\K^N_0)(G\sqcap\bar\p G)&=(g_2\K^N_0)(f\Omega_{2,1})+g_3\K^N_0(f\Omega_{3,1}))e_1\\
&+(g_1\K^N_0(f\Omega_{1,2})+g_3\K^N_0(f\Omega_{3,2}))e_2\\
&+(g_1\K^N_0(f\Omega_{1,3})+g_2\K^N_0(f\Omega_{2,3}))e_3.
\end{split}\end{equation}

In order to calculate the term $k=2$ in \eqref{eqn:Koszul},
let 
\begin{align*}
\Omega_{123}&=\left|\begin{matrix} G_1&G_2&G_3\\\bar\p G_1&\bar\p G_2&\bar\p G_3\\\bar\p G_1&\bar\p G_2&\bar\p G_3
\end{matrix}\right|\\
&=2\left(G_1\bar\p G_2\land\bar\p G_3
+G_2\bar\p G_3\land\bar\p G_1+G_3\bar\p G_1\land\bar\p G_2\right).
\end{align*}
It is easy to check  that  $G\sqcap\bar\p G\sqcap\bar\p G=\Omega_{123}\,\, e_1\sqcap e_2\sqcap e_3$.
The use of the determinants of forms to formulate the Koszul complex can be found in \cite{Lin}.  

Therefore, 
\begin{align*}
&(\delta_{g^*}\K^N_1)(\Omega_{123}\,\, e_1\sqcap e_2\sqcap e_3)\\
&=g_1\K^N_1(\Omega_{123})\,e_2\sqcap e_3+ g_2\K^N_1(\Omega_{123})\,e_3\sqcap e_1+g_3\K^N_1(\Omega_{123})\,e_1\sqcap e_2,
\end{align*}
and
\begin{align*}
(\delta_{g^*}\K^N_0)&(\delta_{g^*}\K^N_1)(\Omega_{123}\,\, e_1\sqcap e_2\sqcap e_3)\\
&=\K_0^N(g_1\K^N_1(\Omega_{123}))\,(g_2 e_3- g_3 e_2)\\
&+ \K_0^N(g_2\K^N_1(\Omega_{123}))\,(g_3e_1-g_1 e_3)\\
&+\K_0^N(g_3\K^N_1(\Omega_{123}))\,(g_1 e_2- g_2e_1).
\end{align*}

Observe that in general we have
\begin{lem}\label{lem:coefficients1}
The coefficients of $\left(\delta_{g^*}\K\right)^k \left(fG\sqcap (\bar \p G)^k\right)$, $k\ge 1$ are linear combinations of terms of type
\begin{equation}\label{eqn:coefficients1}
F_le_l=g_{i_0}\K_0(g_{i_1}(\K_1(....(g_{i_{k-1}} \K_{k-1}(f G_{j_0} \bar\p G_{j_1}\land\ldots\land \bar\p G_{j_k}))))))\, e_l.
\end{equation}
\end{lem}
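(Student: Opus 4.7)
The proof is essentially by induction on $k$, unpacking the definitions of $\delta_{g^*}$ and $\K$ step by step and carefully tracking bidegrees and $\Lambda$-degrees at each stage. The plan is first to expand the inner form, then to describe the effect of a single step $(\delta_{g^*}\K)$, and finally to iterate.

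First I would expand
$$fG\sqcap(\bar\p G)^k=\sum_{j_0,j_1,\dots,j_k} fG_{j_0}\,\bar\p G_{j_1}\wedge\cdots\wedge\bar\p G_{j_k}\; e_{j_0}\sqcap e_{j_1}\sqcap\cdots\sqcap e_{j_k},$$
which is a $(0,k)$-form valued in $\Lambda^{k+1}$ whose coefficients are exactly the scalar $(0,k)$-forms $fG_{j_0}\bar\p G_{j_1}\wedge\cdots\wedge\bar\p G_{j_k}$ appearing on the right-hand side of \eqref{eqn:coefficients1}. Next I would isolate the action of one step on a generic summand $\eta_I\,e_I$ where $e_I=e_{i_1}\sqcap\cdots\sqcap e_{i_l}$ and $\eta_I$ is a $(0,q)$-form. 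By the defining formula $(\delta_{g^*}\K)(\eta_I e_I)=\K(\eta_I)\sqcap\delta_{g^*}(e_I)$ together with the bidegree decomposition $\K=\sum_q\K_q$, the only piece producing the correct bidegree is $\K_{q-1}$, so one step yields
$$\sum_{j=1}^{l}(-1)^{j-1}g_{i_j}\,\K_{q-1}(\eta_I)\; e_{i_1}\sqcap\cdots\widehat{e_{i_j}}\cdots\sqcap e_{i_l},$$
reducing both the form-degree in $z$ and the $\Lambda$-degree by one while inserting a single factor $g_{i_j}$ outside a single new copy of $\K_{q-1}$ (and the whole prior coefficient sits inside $\K_{q-1}$).

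Iterating this $k$ times, starting from form-degree $k$ and $\Lambda$-degree $k+1$, the form-degree descends $k\to k-1\to\cdots\to 0$ and the $\Lambda$-degree descends $k+1\to k\to\cdots\to 1$. At step $s$ (for $s=1,\dots,k$) the current coefficient has form-degree $k-s+1$, so the operator applied is $\K_{k-s}$; consequently the innermost $\K$ is $\K_{k-1}$ and the outermost is $\K_0$, with each intermediate $\K_{k-s}$ receiving as its argument the previously-built expression multiplied by the newly-introduced factor $g_{i_s}$. After $k$ iterations the resulting summands are exactly of the form
$$g_{i_0}\K_0\bigl(g_{i_1}\K_1(\cdots g_{i_{k-1}}\K_{k-1}(fG_{j_0}\bar\p G_{j_1}\wedge\cdots\wedge\bar\p G_{j_k})\cdots)\bigr)\,e_l,$$
with $e_l\in\Lambda^1$, and the full $(\delta_{g^*}\K)^k(fG\sqcap(\bar\p G)^k)$ is a (signed) linear combination of such terms.

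This argument is essentially a bookkeeping exercise; there is no analytic obstacle. The only subtlety is verifying at every stage that the correct component $\K_q$ is selected by the bidegree, and that the $g$-factor introduced by $\delta_{g^*}$ always sits \emph{outside} the $\K$ applied at the same step but \emph{inside} all $\K$'s applied in later steps. Both facts are immediate from the formula $(\delta_{g^*}\K)(\eta_I e_I)=\K(\eta_I)\sqcap\delta_{g^*}(e_I)$, since $\K$ acts on the entire coefficient (including any previously accumulated $g$-factors and nested $\K$'s) before $\delta_{g^*}$ is applied to the basis element. Signs and multiplicities only affect the numerical coefficients of the linear combination, which do not appear in the statement, so the lemma follows.
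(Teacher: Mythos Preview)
Your proof is correct and is precisely the natural argument the paper has in mind. In fact the paper does not give a proof of this lemma at all: it precedes the statement with the explicit computations for $m=2$ and $m=3$ and then states the general case as an observation (``Observe that in general we have''). Your inductive unpacking of $(\delta_{g^*}\K)$, tracking the drop in both form-degree and $\Lambda$-degree and using the formula $(\delta_{g^*}\K)(\eta_I e_I)=\K(\eta_I)\sqcap\delta_{g^*}(e_I)$ to see that each $g$-factor lands outside the current $\K_{k-s}$ but inside all later ones, is exactly the bookkeeping the paper's examples illustrate.
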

\vspace{.2cm}

To conclude, for completeness, we recall the proof of the fact that $\T_g^N(f)\in H(\Lambda^1)$ given in Theorem 3.1 in \cite{An-Car3}.

Let $r=\min\{n,m-1\}$ and let $V_k=G\sqcap(\bar\p G)^k$. We define by induction the forms $U_r=V_r$ and $U_k=V_k-(\delta_{g^*}\K)(U_{k+1})$, $0\le k<r$.
Observe that $U_0=\T_g^N(f)$, $\delta_{g^*}(\bar\p G)=(\bar\p(\delta_{g^*} G))=0$ and  $\delta_{g^*}(V_k)=(\bar\p G)^k$. 

We want to prove that $\bar\p U_k=0$ for all $0\le k\le r$. If $r=n$, then 
by bidegree reasons, the form $V_r=G\sqcap(\bar\p G)^r$ satisfies $\bar\p V_r=0$. If $r=m-1$, then using $\delta_{g^*}\bar\p G=0$ we also obtain $\bar\p V_r=0$.
Assume that $\bar\p U_{k+1}=0$. Since $\delta_{g^*}^2=0$, we have $\delta_{g^*}U_{k+1}=\delta_{g^*}V_{k+1}=(\bar\p G)^{k+1}=\bar\p V_{k}$. Therefore,  we have  $\bar\p((\delta_{g^*}\K_k)(U_{k+1})) =\delta_{g^*}(\bar\p\K_k(U_{k+1}))=\delta_{g^*}U_{k+1}=\bar\p V_{k}$, which proves that $\bar\p U_k=0$.

\subsection{Estimates of $F_l$ in \eqref{eqn:coefficients1}}
We want to prove that if $f\in H^2(\theta)$, then the terms $F_l$ are in $L^2(\theta)$. This result will be a consequence of the same technique used to prove the case of two generators, which permit us to estimate the cases $k=0$ and $k=1$, and the following proposition. For $k\geq 2$ we will use other arguments, since in this case by Lemma \ref{lem:decompforms} below it is not necessary to use the decomposition of the operator given in (i) of Proposition \ref{prop:KN1}.

\begin{prop}\label{prop:estcoef} For $N$ large enough and $k\ge 2$, we have
\begin{align*}
&\left|(\delta_{g^*}\K^N)^k \left(fG\sqcap (\bar \p G)^k\right)(\z)\right|\\
&\qquad\lesssim \int_{\B}\frac{|f(w)|((1-|w|^2)|\p g(w)|^2
+ |\p_T g(w)|^2)\,(1-|w|^2)^n}{|1-\z\bar w |^{2n}}d\nu(w).
\end{align*}
\end{prop}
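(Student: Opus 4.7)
The plan is to estimate each coefficient of $(\delta_{g^*}\K^N)^k\bigl(fG\sqcap(\bar\p G)^k\bigr)$ pointwise at $\z$. By Lemma \ref{lem:coefficients1}, these coefficients are linear combinations of nested compositions
\begin{equation*}
g_{i_0}\K^N_0\bigl(g_{i_1}\K^N_1\bigl(\cdots g_{i_{k-1}}\K^N_{k-1}(fG_{j_0}\,\bar\p G_{j_1}\land\cdots\land\bar\p G_{j_k})\bigr)\bigr),
\end{equation*}
and since $g\in H^\infty$ and $|g|\ge\delta>0$, the scalar factors $g_{i_r}$ and $G_{j_0}$ are uniformly bounded and may be absorbed into the implicit constant.

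At each level of the nesting I would apply the decomposition of Lemma \ref{lem:decomK} to $\K^N_q$, bounding each piece by the $\L$ kernels in \eqref{eqn:estKN3}. The feature that separates the case $k\ge 2$ from the cases $k=0,1$ treated in Section \ref{sect:corona2} is that the innermost data $\bar\p G_{j_1}\land\cdots\land\bar\p G_{j_k}$ is a wedge of at least two factors. Pairing two of them with the kernel $\K^N_{k-1}$ through \eqref{eqn:decomK} produces the weight
\begin{equation*}
\mu_g(w)=(1-|w|^2)|\p g(w)|^2+|\p_T g(w)|^2,
\end{equation*}
since the ``pure'' part $\K^{N,1}_{k-1}$ absorbs $|\p g|^2$ together with an extra $(1-|w|^2)$ factor from its exponent, while the pieces $\K^{N,2}_{k-1}\land\bar\p|w|^2$ and $\K^{N,3}_{k-1}\land\bar\p|z|^2$ force tangential components and hence contribute $|\p_T g|^2$. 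This is exactly the content of the form decomposition in the lemma referred to as Lemma \ref{lem:decompforms}, and it reflects the antisymmetric structure already visible in the combinations $\Omega_{i,j}=G_i\bar\p G_j-G_j\bar\p G_i$ that arose for $m\le 3$ generators.

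The remaining $k-2$ factors of $\bar\p G$ contribute at most $|\p g|^{k-2}$ at the innermost variable, and are absorbed pointwise using $|\p g(w)|\lesssim(1-|w|^2)^{-1}$, valid since $g\in H^\infty$, compensated by the $(1-|w|^2)^N$ factors appearing in the $\K^N$ kernels whenever $N$ is chosen sufficiently large in terms of $n$, $k$ and $m$. The $k-1$ intermediate integrations in the variables $w_1,\ldots,w_{k-1}$ are then carried out one at a time via Lemma \ref{lem:estK} and the standard Forelli--Rudin composition estimates for the $\L^N_{M,L}$ kernels, each step yielding a new $\L$ kernel whose parameters can be tracked explicitly. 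After all intermediate variables are integrated out, a single integral over $w\in\B$ remains, with a kernel of the type $(1-|w|^2)^n/|1-\z\bar w|^{2n}$, and this gives the stated pointwise bound.

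The main technical obstacle is the combinatorial bookkeeping of the bidegrees produced by \eqref{eqn:decomK} and the tracking of the parameters $N$, $M$, $L$ of the intermediate $\L$ kernels through the $k$ compositions, in particular the choice of $N$ guaranteeing that every intermediate type is admissible for Lemma \ref{lem:estK}. A secondary but essential point is verifying that the two-factor pairing of $\bar\p G_{j_1}\land\bar\p G_{j_2}$ against the tangential/radial splitting of the kernel genuinely reproduces the weight $\mu_g(w)$ rather than only a weaker combination; this is the decisive step making the proposition go through uniformly in $k$.
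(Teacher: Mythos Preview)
Your outline has the right ingredients (Lemma~\ref{lem:coefficients1}, the kernel decomposition~\eqref{eqn:decomK}, Lemma~\ref{lem:decompforms}, and a composition lemma), but there is a genuine gap in the accounting that would prevent you from reaching the Poisson kernel bound as stated.

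First, your claim that the piece $\K^{N,3}_{k-1}\land\bar\p|z|^2$ ``forces tangential components'' in the form is incorrect: this factor lives in the \emph{output} variable $z$, not in the integration variable $w$, so it imposes no constraint on $\bar\p G_{j_1}\land\cdots\land\bar\p G_{j_k}$ at all. Only $\K^{N,2}_{k-1}\land\bar\p|w|^2$ kills the $\tilde G_T\land\bar\p|w|^2$ part of the form. Second, and more seriously, if you simply bound every $\K^N_q$ by a type-$1/2$ kernel and use the global estimate $|\tilde G|\lesssim(1-|w|^2)^{1/2-k/2}\mu_g(w)$ from Lemma~\ref{lem:decompforms}, the $k$ compositions via Lemma~\ref{lem:estcomposK} (this, not Lemma~\ref{lem:estK}, is the relevant tool) produce a kernel of total type $k/2$, and the resulting bound is $(1-|w|^2)^{N-k/2}/|1-\z\bar w|^{n+N+1/2-k/2}$. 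This is \emph{not} dominated by the Poisson kernel $(1-|w|^2)^n/|1-\z\bar w|^{2n}$: there is a residual loss of exactly $|1-\z\bar w|^{-1/2}$.

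The paper recovers this half-power through a cancellation you do not mention, namely \eqref{eqn:decomknl}: at adjacent levels sharing the intermediate variable $z$, the pieces $\K^{N,2}_q\land\bar\p|z|^2$ and $\K^{N,3}_{q+1}\land\bar\p|z|^2$ annihilate each other, and moreover $\K^{N,3}_0=0$. This forces a dichotomy: either at least one of the $k$ kernel factors is the type-$1$ piece $\K^{N,1}$ (the $F_1$ case, total type $\ge(k+1)/2$), or \emph{all} factors are $\K^{N,2}$ (the $F_2$ case), in which case the innermost $\bar\p|w|^2$ kills $\tilde G_T$ and one may use the sharper bound $|\tilde G_R|\lesssim(1-|w|^2)^{1-k/2}\mu_g(w)$. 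In both cases the half-power is restored and the Poisson kernel emerges for $N\ge n+k/2$. Your proposal treats only the innermost kernel and misses this propagation argument across all $k$ levels.
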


Assuming this result, it is easy to prove the corona theorem for $p=2$.

\begin{thm} \label{thm:corH2} Let $g=(g_1,\ldots,g_m),\,g_j\in H^\infty$ satisfying  $\inf_{z\in \B}|g(z)|>0$. If $N$ is large enough, then  the operator $\T^N_g$ in Theorem \ref{thm:Koszul} maps $H^2(\theta) $ to $H^2(\theta,\Lambda^1)$ for any $\theta\in\A_2$.
\end{thm}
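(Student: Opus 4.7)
By Theorem \ref{thm:Koszul}, $\T_g^N(f)$ is a holomorphic $\Lambda^1$-valued function on $\B$ with $\delta_{g^*}\T_g^N(f)=f$; what remains is the estimate $\|\T_g^N(f)\|_{H^2(\theta,\Lambda^1)}\lesssim\|f\|_{H^2(\theta)}$. Since the components are holomorphic, it suffices to bound the boundary $L^2(\theta,d\sigma)$-norm of each coefficient, and by Lemma \ref{lem:coefficients1} these are finite linear combinations of the terms displayed in \eqref{eqn:coefficients1}, indexed by $k=0,1,\dots,\min(n,m-1)$. I plan to establish the estimate separately for each value of $k$.

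For $k=0$ the bound $\|fG\|_{L^2(\theta,d\sigma)}\lesssim\|f\|_{H^2(\theta)}$ is immediate from $|G|\lesssim 1$. For $k=1$ the summands take the form $g_i\K^N_0(f\Omega_{i,l})$ with $\Omega_{i,l}=G_i\bar\p G_l-G_l\bar\p G_i$, and a direct computation along the lines of \eqref{eqn:omega} gives $|\Omega_{i,l}|\lesssim|\p g|$, $|\p(f\Omega_{i,l})|\lesssim|\p g|^2|f|+|\p g||\p f|$ together with the analogous tangential inequality, so that Lemma \ref{lem:estomeg1} holds verbatim with $\Omega$ replaced by $\Omega_{i,l}$. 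Consequently the decomposition $\K^N_0(\vartheta)=\Q^{N,1}_0(\vartheta)+\Q^{N,2}_0(\p\vartheta)$ from Proposition \ref{prop:KN1}\eqref{item:KN12}, the pointwise bound of Lemma \ref{lem:estKwf0}, and the duality argument culminating in Proposition \ref{prop:corona2} apply unchanged to yield the desired $L^2(\theta)$ estimate for the $k=1$ summands.

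For $k\geq 2$ I invoke Proposition \ref{prop:estcoef}, which provides the pointwise bound
\begin{equation*}
\bigl|(\delta_{g^*}\K^N)^k(fG\sqcap(\bar\p G)^k)(\z)\bigr|\lesssim\int_\B\frac{|f(w)|\bigl((1-|w|^2)|\p g(w)|^2+|\p_T g(w)|^2\bigr)(1-|w|^2)^n}{|1-\z\bar w|^{2n}}\,d\nu(w).
\end{equation*}
Testing against $\psi\in L^2(\theta')$ with $\theta'=\theta^{-1}\in\A_2$ and applying Fubini transforms the inner integral over $\bB$ into the Poisson--Sz\"ego projection $\mathbb{P}(|\psi|)(w)$. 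Splitting the factor $1\approx\Theta(w)^{1/2}\Theta'(w)^{1/2}$ and applying Cauchy--Schwarz then produces
\begin{equation*}
\Bigl|\int_\bB(\delta_{g^*}\K^N)^k(\cdots)(\z)\,\psi(\z)\,d\sigma(\z)\Bigr|\lesssim\Bigl(\int_\B|f|^2\,d\mu_{g,\theta}\Bigr)^{1/2}\Bigl(\int_\B|\mathbb{P}(|\psi|)|^2\,d\mu_{g,\theta'}\Bigr)^{1/2}.
\end{equation*}
Proposition \ref{lem:estKw12} applied to each factor, using that both $\theta$ and $\theta'$ lie in $\A_2$, bounds the right-hand side by $\|f\|_{H^2(\theta)}\|\psi\|_{L^2(\theta')}$, and duality delivers the required boundary estimate.

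The main obstacle in executing this plan lies not in the above assembly but in Proposition \ref{prop:estcoef} itself. For $k\geq 2$ its proof must unwind the iterated compositions $g_{i_0}\K_0(g_{i_1}\K_1(\cdots g_{i_{k-1}}\K_{k-1}(\cdot)))$ appearing in \eqref{eqn:coefficients1}, track the three-part decomposition of $\K^N_q$ from Lemma \ref{lem:decomK} with its type-$1$ and type-$1/2$ pieces, collapse the successive interior integrations into a single Poisson-type kernel $(1-|w|^2)^n/|1-\z\bar w|^{2n}$, and arrange that each factor $\bar\p G_{j_\alpha}$ contributes exactly one copy of $|\p g|$ or $|\p_T g|$ so that the final Carleson density is $(1-|w|^2)|\p g|^2+|\p_T g|^2$. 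Balancing radial and tangential derivatives through this iteration is the combinatorially delicate step.
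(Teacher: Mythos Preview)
Your proposal is correct and follows essentially the same approach as the paper: the $k=0$ term is trivial, the $k=1$ terms are handled by the two-generator argument of Section~\ref{sect:corona2}, and for $k\ge 2$ Proposition~\ref{prop:estcoef} combined with the duality/Poisson--Sz\"ego/Carleson-measure argument (via Proposition~\ref{lem:estKw12} and $\Theta^{1/2}{\Theta'}^{1/2}\approx 1$) gives the $L^2(\theta)$ bound. Your closing remark that the real work lies in the proof of Proposition~\ref{prop:estcoef} is also accurate, and the paper treats that proposition separately using exactly the ingredients you anticipate (Lemma~\ref{lem:decomK}, Lemma~\ref{lem:decompforms}, and the composition estimate Lemma~\ref{lem:estcomposK}).
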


\begin{proof} The estimate of $(\delta_{g^*}\K^N)^0(fG)=fG$, corresponding to the term $k=0$ in \eqref{eqn:Koszul}, is clear. The estimate of the term $k=1$, that is  $(\delta_{g^*}\K^N)(fG\sqcap\bar\p G)$, follows arguing as in the case  of two generators (observe that arguing as in \eqref{eqn:deltaK2} the coefficients of the terms that appear in the representation of $(\delta_{g^*}\K^N)(fG\sqcap\bar\p G)$ are of the same type of the expressions $g_j\K^N_0(f\Omega)$ considered in Section 5).

Therefore, it remains to consider the terms $k\ge 2$. 
For any $\psi\in L^2(\theta^{-1})$, Proposition \ref{prop:estcoef} gives
\begin{equation*}
\begin{split}
&\left|\int_{\bB} F_l(\z)\psi(\z)d\sigma(\z)\right|\lesssim\int_{\B} |f(w)||\p_T g(w)|^{2}\mathbb{P}(|\psi|)(w)d\nu(w)\\
&\qquad \qquad\qquad+ \int_{\B} (1-|w|^2)|f(w)||\p  g(w)|^{2}\mathbb{P}(|\psi|)(w)d\nu(w).
\end{split}\end{equation*} 

Thus, arguing as in Proposition \ref{prop:corona2}, 
 H\"older's inequality, the fact that $\Theta^\frac 12 {\Theta'}^\frac 12\approx 1$ and Proposition \ref{lem:estKw12} give
\begin{align*}
\left|\int_{\bB} F_l(\z)\psi(\z)d\sigma(\z)\right|
&\lesssim \left(\int_{\bB} |f|^2d\mu_{g,\theta}\right)^{1/2}
\left(\int_{\bB} |\mathbb{P}(\psi)|^2d\mu_{g,\theta'}\right)^{1/2}\\
&\lesssim \|f\|_{H^2(\theta)}\|\psi\|_{L^2(\theta')}
\end{align*}
which proves that $F_l(\z)\in L^2(\theta)$ and $\|F_l\|_{L^2(\theta)}\lesssim \|f\|_{H^2(\theta)}$.
\end{proof}

\vspace{.3cm}

Therefore, it remains to prove Proposition \ref{prop:estcoef}.

\begin{lem}\label{lem:decompforms} For $k\ge 2$, 
\begin{equation}
\label{eqn:form1}  (G_{j_0} \bar\p G_{j_1}\land\ldots\land \bar\p G_{j_k})(w)=\tilde G_R(w)+\tilde G_T(w)\land \bar\p|w|^2
\end{equation}
with 
\begin{equation}\label{eqn:form11} \begin{split}
|\tilde G_R(w)|&\lesssim (1-|w|^2)^2|\p g(w)|^2+(1-|w|^2)^{1-k/2}|\p_T g(w)|^{2}\\
 |\tilde G_T(w)|&\lesssim (1-|w|^2)^{1/2-k/2} \left[(1-|w|^2)|\p g(w)|^{2}+|\p_T g(w)|^2\right],
 \end{split}\end{equation}
 and consequently
 $$ |\tilde G(w)|\lesssim (1-|w|^2)^{1/2-k/2} \left[(1-|w|^2)|\p g(w)|^{2}+|\p_T g(w)|^2\right].$$
\end{lem}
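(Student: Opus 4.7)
The approach is to decompose each factor $\bar\p G_{j_i}$ as a part proportional to $\bar\p|w|^2$ plus an orthogonal (tangential) piece, expand the resulting exterior product, and exploit that $\bar\p|w|^2\land\bar\p|w|^2=0$ to collect terms according to whether $\bar\p|w|^2$ survives as a factor. The starting point is the explicit formula
$$\bar\p G_j = \bar\p\Bigl(\frac{\bar g_j}{|g|^2}\Bigr) = \frac{\bar\p\bar g_j}{|g|^2} - \frac{\bar g_j}{|g|^4}\sum_l g_l\,\bar\p\bar g_l,$$
which, together with $\inf|g|>0$ and $g\in H^\infty$, shows that $\bar\p G_j$ is a linear combination with $L^\infty$-bounded coefficients of the $(0,1)$-forms $\bar\p\bar g_l$, $l=1,\ldots,m$.

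For any $(0,1)$-form $\omega$ at a point with $|w|>0$, the Hermitian orthogonal decomposition $\omega=a\bar\p|w|^2+b$, with $b$ orthogonal to $\bar\p|w|^2$, satisfies $|a|\cdot|\bar\p|w|^2|\le|\omega|$ and $|b\land\bar\p|w|^2|=|b|\cdot|\bar\p|w|^2|$. Applied to $\bar\p\bar g_l$, together with $|\bar\p|w|^2|=|w|$ and the identity $|\bar\p\bar g_l\land\bar\p|w|^2|=|\p g_l\land\p|w|^2|=|\p_T g_l|$, this gives the splitting
$$\bar\p G_j = a_j\bar\p|w|^2 + b_j,\qquad |a_j|\lesssim|\p g|,\quad |b_j|\lesssim|\p_T g|,$$
uniformly for $|w|$ bounded away from $0$ (on the compact region $|w|\le 1/2$ every quantity involved is bounded by a constant depending only on $\|g\|_\infty$ and $\inf|g|$, so the estimates hold trivially with $a_j=0$, $b_j=\bar\p G_j$). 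Since $\bar\p|w|^2\land\bar\p|w|^2=0$, only the summands with at most one factor of $\bar\p|w|^2$ survive the expansion
$$\bigwedge_{i=1}^k(a_i\bar\p|w|^2+b_i)=\bigwedge_{i=1}^k b_i + \Bigl(\sum_{i=1}^k(-1)^{i-1}a_i\bigwedge_{l\ne i}b_l\Bigr)\land\bar\p|w|^2.$$
Multiplying by the bounded factor $G_{j_0}$ yields the decomposition \eqref{eqn:form1} with $\tilde G_R=G_{j_0}\bigwedge_i b_i$ and $\tilde G_T=G_{j_0}\sum_i(-1)^{i-1}a_i\bigwedge_{l\ne i}b_l$, and the crude pointwise bounds $|\tilde G_R|\lesssim|\p_T g|^k$ and $|\tilde G_T|\lesssim|\p g|\,|\p_T g|^{k-1}$.

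To convert these to the exact exponents in \eqref{eqn:form11}, I would invoke the standard Schwarz-type pointwise estimates for $g\in H^\infty(\B)$, namely $|\p g(w)|\lesssim(1-|w|^2)^{-1}$ and the improved tangential bound $|\p_T g(w)|\lesssim(1-|w|^2)^{-1/2}$. For $\tilde G_R$, splitting $|\p_T g|^k=|\p_T g|^2\cdot|\p_T g|^{k-2}$ and absorbing the last factor into $(1-|w|^2)^{1-k/2}$ gives the second term of the stated bound; the summand $(1-|w|^2)^2|\p g|^2$ is a harmless additional majorant needed only on the compact part $|w|\le 1/2$ where $|\p_T g|$ is itself bounded. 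For $\tilde G_T$, apply weighted AM--GM with parameter $\lambda=(1-|w|^2)^{3/2-k/2}$ to obtain
$$|\p g|\,|\p_T g|^{k-1}\lesssim \lambda|\p g|^2 + \lambda^{-1}|\p_T g|^{2k-2},$$
and then use $|\p_T g|^{2k-4}\lesssim(1-|w|^2)^{2-k}$ in the second summand to reduce the whole expression to $(1-|w|^2)^{1/2-k/2}\bigl[(1-|w|^2)|\p g|^2+|\p_T g|^2\bigr]$. The main obstacle is making the orthogonal (Hodge-type) decomposition of $\bar\p G_j$ produce bounds with constants uniform over $\B$; once the splitting is in place, the wedge expansion is purely algebraic and the final pointwise estimates reduce to the Schwarz bounds for $H^\infty$ functions on the ball together with a single weighted AM--GM step.
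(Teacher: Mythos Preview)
Your proof is correct and follows essentially the same strategy as the paper: split each $\bar\p G_{j_i}$ into a piece proportional to $\bar\p|w|^2$ and a tangential remainder, expand the wedge product using $\bar\p|w|^2\land\bar\p|w|^2=0$, and then reduce the resulting powers of $|\p g|$ and $|\p_T g|$ via the Schwarz--type bounds $(1-|w|^2)|\p g|\lesssim 1$, $(1-|w|^2)^{1/2}|\p_T g|\lesssim 1$ together with an AM--GM step.

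The one difference worth noting is in how the splitting of $\bar\p G_l$ is produced. You use the Hermitian orthogonal projection onto $\bar\p|w|^2$, which forces you to divide by $|w|$ and hence to handle the region $|w|\le 1/2$ separately (you do this correctly, and this is exactly where the otherwise superfluous summand $(1-|w|^2)^2|\p g|^2$ in the bound for $\tilde G_R$ gets used). The paper instead writes down the global algebraic identity
\[
\bar\p G_l=(1-|w|^2)\bar\p G_l+\bar\Ra G_l\,\bar\p|w|^2+\sum_{i,j}\bar w_i\,\bar D_{i,j}G_l\,d\bar w_j,
\]
which avoids any case distinction at the cost of producing a three--term rather than two--term decomposition; the non--$\bar\p|w|^2$ part then carries the slightly weaker bound $(1-|w|^2)|\p g|+|\p_T g|$ instead of your $|\p_T g|$. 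Both routes collapse to the same final estimates, and neither has a real advantage over the other.
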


\begin{proof}
The decomposition
\begin{align*}
\bar\p G_l(w)&=(1-|w|^2)\bar\p G_l(w)+\sum_{j=1}^n\sum_{i=1}^n\bar w_i w_j\bar D_i G_l(w) d\bar w_j\\
&\quad+\sum_{j=1}^n\sum_{i=1}^n\bar w_i(w_i\bar D_jG_l(w)-w_j\bar D_iG_l(w)) d\bar w_j\\
&=(1-|w|^2)\bar\p G_l(w)+\bar\Ra G_l(w) \bar\p |w|^2+\sum_{i,j} \bar w_i \bar D_{i,j}G_l(w)d\bar w_j,
\end{align*}
and $(1-|w|^2)^{1/2}|\p_T g(w)|+(1-|w|^2)|\p g(w)|\lesssim 1$, prove \eqref{eqn:form1}
with 
\begin{equation*}\label{eqn:form111} \begin{split}
|\tilde G_R(w)|&\lesssim \left[(1-|w|^2)|\p g(w)|+ |\p_T g(w)|\right]^{k}\\
&\lesssim (1-|w|^2)^2|\p g(w)|^2+(1-|w|^2)^{1-k/2}|\p_T g(w)|^{2}\\
 |\tilde G_T(w)|&\lesssim \left[(1-|w|^2)|\p g(w)|+ |\p_T g(w)|\right]^{k-1}|\p g(w)|
\end{split}\end{equation*}
Since $k\ge 2$, $(1-|w|^2)^{k-1}|\p g(w)|^k\lesssim (1-|w|^2)|\p g(w)|^2$ and 
\begin{align*}
|\p_T g(w)|^{k-1}|\p g(w)|&\lesssim (1-|w|^2)^{1-k/2}|\p_T g(w)||\p g(w)|\\
&\lesssim (1-|w|^2)^{1/2-k/2}\left(|\p_T g(w)|^2+(1-|w|^2)|\p g(w)|^2\right)
\end{align*}
which ends the proof.
\end{proof}

The next lemma is well-known (see for instance Lemma 2.5 in \cite{Or-Fa1}).

\begin{lem}\label{lem:estbiP} If $0\le A,B<N<n+A+B$ , then 
$$
\int_{\B}\frac {(1-|u|^2)^{N-1}}{|1-z\bar u|^{n+A} |1-u\bar w|^{n+B}} \,d\nu(u)\lesssim \frac{1}{|1-\z\bar w|^{n+A+B-N}}.
$$
\end{lem}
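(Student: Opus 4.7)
The plan is to establish this classical two-kernel Forelli-Rudin-type estimate by the standard quasi-triangle splitting argument. The key tool is the inequality
$$|1-z\bar w|\le C\bigl(|1-z\bar u|+|1-u\bar w|\bigr),\qquad z,w,u\in\overline{\B},$$
which follows from the fact that $d(z,w)=|1-z\bar w|^{1/2}$ is a (quasi-)Kor\'anyi metric on the closed unit ball.

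First I would split $\B=E_1\cup E_2$, where $E_1=\{u:|1-z\bar u|\ge|1-u\bar w|\}$ and $E_2$ is its complement. By the quasi-triangle inequality, $|1-z\bar w|\lesssim|1-z\bar u|$ on $E_1$ and $|1-z\bar w|\lesssim|1-u\bar w|$ on $E_2$. Since the integrand and the desired bound are both symmetric under the interchange $(z,A)\leftrightarrow(w,B)$, it suffices to estimate the contribution from $E_1$.

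On $E_1$, the hypothesis $n+A+B-N>0$ lets me decompose $n+A=(n+A+B-N)+(N-B)$ and use $|1-z\bar u|^{n+A+B-N}\gtrsim|1-z\bar w|^{n+A+B-N}$ to factor
$$\frac{1}{|1-z\bar u|^{n+A}}\lesssim\frac{1}{|1-z\bar w|^{n+A+B-N}}\cdot\frac{1}{|1-z\bar u|^{N-B}},$$
which pulls out the desired power of $|1-z\bar w|$. It then remains to bound
$$\int_{\B}\frac{(1-|u|^2)^{N-1}}{|1-z\bar u|^{N-B}|1-u\bar w|^{n+B}}\,d\nu(u)$$
uniformly in $z,w$. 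This is handled by applying the quasi-triangle splitting once more, or equivalently by absorbing $|1-z\bar u|^{-(N-B)}$ partially into the weight via the pointwise bound $(1-|u|^2)\le 2|1-z\bar u|$ and then invoking the one-kernel estimate of Lemma~\ref{lem:estK}. The hypotheses $0\le A,B<N$ place the remaining one-kernel integrals strictly inside the convergent (positive type) regime of that lemma.

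The main technical point, and the principal obstacle, is bookkeeping the exponents so that one stays strictly inside the convergent regime of Lemma~\ref{lem:estK} and avoids the borderline logarithmic case where the type equals zero. The strict inequalities $A,B<N<n+A+B$ in the hypothesis are precisely what makes this bookkeeping possible, after perturbing exponents by a small $\eps>0$ in the reduction step if necessary.
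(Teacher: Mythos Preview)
The paper does not give its own proof of this lemma; it simply records the result as well known and refers to Lemma~2.5 of \cite{Or-Fa1}. So there is no argument in the paper to compare against, and I will comment only on the validity of your sketch.

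Your opening move --- the quasi-triangle splitting $\B=E_1\cup E_2$ and the factorisation
$|1-z\bar u|^{-(n+A)}\lesssim |1-z\bar w|^{-(n+A+B-N)}\,|1-z\bar u|^{-(N-B)}$ on $E_1$ --- is fine. The gap is in the next step, where you claim that the residual integral
\[
J(z,w)=\int_{E_1}\frac{(1-|u|^2)^{N-1}}{|1-z\bar u|^{N-B}\,|1-u\bar w|^{n+B}}\,d\nu(u)
\]
is uniformly bounded, and that this can be achieved either by a second quasi-triangle split or by absorbing $|1-z\bar u|^{-(N-B)}$ via $(1-|u|^2)\le 2|1-z\bar u|$ together with Lemma~\ref{lem:estK}. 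Neither route works, and in fact $J(z,w)$ is \emph{not} uniformly bounded: take $z=w$, so that $E_1=\B$ and the integrand becomes $(1-|u|^2)^{N-1}|1-z\bar u|^{-(n+N)}$, which has type $0$ and integrates to $\approx\log\frac{1}{1-|z|^2}$. Every variant of your reduction collapses to a one-kernel integral of type exactly $0$; the strict inequalities $A,B<N<n+A+B$ do not move you off this borderline, and an $\eps$-perturbation applied at the factoring step shifts the missing $\eps$ into the \emph{exponent of} $|1-z\bar w|$, producing the wrong power in the conclusion rather than curing the logarithm.

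What is actually needed after the first split is a dyadic decomposition of $E_1$ into shells $\{|1-z\bar u|\sim 2^k|1-z\bar w|\}$, $k\ge 0$. On each shell the quasi-triangle inequality forces $|1-u\bar w|\lesssim 2^k|1-z\bar w|$ as well, so the integral over the shell is taken over a small Kor\'anyi ball around $w$; this localisation is what recovers the missing factor $2^{-k(n+A+B-N)}$ and makes the sum over $k$ converge. The pointwise factoring you propose throws away exactly this localisation, which is why it fails by a logarithm.
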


\begin{lem} \label{lem:estcomposK} If the kernel $\L^{N}_{M,L}$ has type $\kappa=n+N-M-2L>0$, then for $\kappa-n<A\le N$ and $B\ge 0$,
$$\int_{\B}\L^N_{n+A,0}(z,\z) \L^{N+B}_{M+B,L}(w,z)d\nu(z)\lesssim \L^N_{n+A-\kappa,0}(w,\z) .$$

 Observe that $type(\L^N_{n+A-\kappa,0})=type(\L^N_{n+A,0})+type(\L^{N}_{M,L})$.
\end{lem}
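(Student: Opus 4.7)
The plan is to reduce the computation to the pure Forelli--Rudin estimate of Lemma~\ref{lem:estbiP} by factoring out the $\phi(w,z)^L$, which is the only obstruction to a direct application.

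First I would exploit the identity $\phi(w,z)=|1-z\bar w|^2\,|\varphi_w(z)|^2$, where $\varphi_w$ is the Möbius involution exchanging $w$ and $0$; this follows immediately from $1-|\varphi_w(z)|^2=(1-|w|^2)(1-|z|^2)/|1-z\bar w|^2$. This rewrites
$$
\L^{N+B}_{M+B,L}(w,z)=|\varphi_w(z)|^{-2L}\,\L^{N+B}_{M+B+2L,0}(w,z),
$$
with the ``$L=0$'' kernel on the right still of type $\kappa$. I would then split the $z$-integral into the far region $E_1=\{z:|\varphi_w(z)|\ge 1/2\}$ and the pseudohyperbolic ball $E_2=\{z:|\varphi_w(z)|<1/2\}$.

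On $E_1$ the factor $|\varphi_w(z)|^{-2L}$ is harmless ($\lesssim 1$), and what remains is
$$
\int_{\B}\frac{(1-|z|^2)^{N-1}\,(1-|w|^2)^{N+B-1}}{|1-\bar z \z|^{n+A}\,|1-z\bar w|^{M+B+2L}}\,d\nu(z),
$$
which fits the pattern of Lemma~\ref{lem:estbiP} once one writes $M+B+2L=n+(B+N-\kappa)$. The hypotheses of that lemma reduce to $A\ge 0$, $B+N-\kappa\ge 0$, and $\kappa<n+A+B$, all of which are ensured by $\kappa-n<A\le N$ and $B\ge 0$. Lemma~\ref{lem:estbiP} then produces $(1-|w|^2)^{N+B-1}\,|1-\bar w\z|^{-(n+A+B-\kappa)}$, and comparing with the target via the elementary inequality $(1-|w|^2)^{B}\lesssim |1-\bar w\z|^{B}$ gives the desired bound on $E_1$.

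On $E_2$ the point $z$ lies in a fixed pseudohyperbolic ball around $w$, so standard estimates give $(1-|z|^2)\approx (1-|w|^2)\approx|1-z\bar w|$, and the Korányi ``triangle'' inequality for $|1-\,\bar{\cdot}\,\cdot|^{1/2}$ yields $|1-\bar z\z|\approx|1-\bar w\z|$. Pulling these mutually comparable quantities out of the integral reduces the estimate to $\int_{E_2}|\varphi_w(z)|^{-2L}\,d\nu(z)$; the change of variables $u=\varphi_w(z)$, whose Jacobian on $E_2$ is of order $(1-|w|^2)^{n+1}$, converts this to $\int_{|u|<1/2}|u|^{-2L}\,d\nu(u)$, a finite integral because $L<n$. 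A bookkeeping of the resulting powers of $(1-|w|^2)$ (using $2N-M-2L-2+(n+1)=N-1+\kappa$) produces $(1-|w|^2)^{N-1+\kappa}/|1-\bar w\z|^{n+A}$, which is $\lesssim \L^N_{n+A-\kappa,0}(w,\z)$ thanks to $(1-|w|^2)^{\kappa}\lesssim|1-\bar w\z|^{\kappa}$.

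The main technical obstacle is the clean matching of parameters with the hypotheses of Lemma~\ref{lem:estbiP}: in the borderline situation where $\kappa>N$ (so $B+N-\kappa$ can be negative), the direct invocation of that lemma on $E_1$ fails, and one must instead use the trivial observation that $\int_\B|1-z\bar w|^{-(M+B+2L)}d\nu(z)$ is bounded whenever its exponent is less than $n$, and then close the estimate with the same $(1-|w|^2)\le|1-\bar w\z|$ trick. Apart from this parameter bookkeeping, the argument is a standard Forelli--Rudin manipulation.
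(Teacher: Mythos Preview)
Your approach is essentially the paper's: both hinge on the identity $\phi(w,z)=|1-z\bar w|^2\,|\varphi_w(z)|^2$ and a near/far split according to $|\varphi_w(z)|$, with Lemma~\ref{lem:estbiP} controlling the far piece and the integrability of $|u|^{-2L}$ (from $L<n$) controlling the near piece. The only cosmetic difference is that the paper first reduces to $B=0$ and performs the global change of variables $u=\varphi_w(z)$ before splitting at $|u|=1/2$, whereas you split in the $z$-variable and invoke Lemma~\ref{lem:estbiP} directly on $E_1$ without changing variables there; the two are equivalent. One small slip in your bookkeeping: the assertion that ``$A\ge 0$ \dots [is] ensured by $\kappa-n<A\le N$'' fails when $\kappa<n$, so that range of $A$ needs the same kind of elementary patch you already supply for the case $\kappa>N$.
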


\begin{proof} Since $\L^{N+B}_{M+B,L}(w,z)\lesssim \L^{N}_{M,L}(w,z)$ we can consider $B=0$. 

The left hand side term in the above inequality is
$$I(w,\z)=(1-|w|^2)^{N-1}
\int_{\B}\frac{(1-|z|^2)^{N-1}}{|1-\z\bar z|^{n+A}|1-z\bar w|^M\,\phi(w,z)^{L}}d\nu(z) .$$

Let $\varphi_w(z)$ denotes the automorphism of the unit ball which maps $w$ to 0. We will use the change $u=\varphi_w(z)$ and the formulas in Section 2.2 in \cite{Ru} to  reduce the above estimate to the one of Lemma \ref{lem:estbiP}.

Since 
\begin{equation}\label{eqn:Rudin} 
1-\varphi_w(z)\overline{\varphi_w(u)}=
\frac{(1-|w|^2)(1-z\bar u)}{(1-z\bar w)(1-w\bar u)},
\end{equation}
we have
\begin{align*}
1-|\varphi_w(u)|^2&=\dfrac{(1-|w|^2)(1-|u|^2)}{|1-u\bar w|^2},\quad\text{   and }\\
\phi(w,z)&=|1-z\bar w|^2|\varphi_w(z)|^2.
\end{align*} 
Therefore, the change of variables $u=\varphi_w(z)$ gives 
\begin{align*}
&I(w,\z)\\
&=c\int_{\B} 
\frac{(1-|w|^2)^{N-1}(1-|\varphi_w(u)|^2)^{N-1}}{|1-\z\overline{\varphi_w(u)}|^{n+A}|1-\varphi_w(u)\bar w|^{M+2L}|u|^{2L}}\frac{(1-|w|^2)^{n+1}}{|1-u\bar w |^{2n+2}} \,d\nu(u).
\end{align*}

By \eqref{eqn:Rudin}, 
\begin{align*}
1-\varphi_w(u)\bar w&=1-\varphi_w(u)\overline{\varphi_{w}(0)}=\frac{1-|w|^2}{1-u\bar w}\\
1-\z\overline{\varphi_w(u)}&=1-\varphi_w(\varphi_w(\z))\overline{\varphi_w(u)}=\frac{(1-\z\bar w)(1-\varphi_w(\z)\bar u )}{1-u\bar w}
\end{align*}
and therefore
\begin{align*}
&I(w,\z)\\
&=c\frac{(1-|w|^2)^{N-1+\kappa}}{|1-\z\bar w|^{N+A}}
\int_{\B}\frac {(1-|u|^2)^{N-1}}{|1-\varphi_w(\z)\bar u|^{n+A}|1-u\bar w |^{N+\kappa-A}|u|^{2L}} \,d\nu(u)
\end{align*}

We decompose the above integral in the sum of the integral in the ball of radious 1/2 and of the integral in its complementary set. Since $L<2n$ and $|1-\bar u z|\approx 1$ on $\frac 12 \B=\{u\in\B; |u|\le 1/2\}$, the  integral in this set is bounded. By Lemma \ref{lem:estbiP}, the  integral in the complementary of $\frac 12 \B$ is bounded by $\dfrac{1}{|1-\varphi_w(\z)\bar w|^\kappa}=\dfrac{|1-\z\bar w|^\kappa}{(1-|w|^2)^\kappa}$, which concludes the estimate.
\end{proof}

\begin{proof}[Proof of Proposition \ref{prop:estcoef}:] 
Observe that by decomposition \eqref{eqn:decomK}, and the facts that $\bar\p|w|^2\land \bar\p|w|^2=0$, 
\begin{equation}\label{eqn:decomknl}\begin{split}
&\K_q^{N,2}(z,u)\land\K_{q+1}^{N,3}(w,z)=0,\,\,\text{ for all $0\le q\le n-1$,\,\, and}\\
& \K^{N,3}_0=0,
\end{split}\end{equation} 
the  term in \eqref{eqn:coefficients1} is a sum of terms of type 
$$F_1=g_{i_0}\K_0^{N,j_0}(g_{i_1}(\K_1^{N,j_1}(....(g_{i_{k-1}} \K_{k-1}^{N,j_{k-1}}(f\tilde G ))....))),$$
with $j_l=1,2,3$ and at last one of them equal to 1, and one term of type
$$F_2=g_{i_0}\K_0^{N,2}(g_{i_1}(\K_1^{N,2}(....(g_{i_{k-1}} \K_{k-1}^{N,2}(f\tilde G_T ))....))).$$

Observe that by \eqref{eqn:decomknl}, all terms including $\K_{q}^{N,3}(f\tilde G)$
are considered in the first type $F_1$.

Since  $|\K^{N,1}_q|$ is bounded by a kernel of type 1 and $|\K^{N,2}_q|$, $|\K^{N,3}_q|$ are bounded by a kernel of type 1/2, the kernels in  $F_1$ are bounded by a product of kernels of type 1 or 1/2 and whose sum of types are greater or equal to $(k-1)/2+1=(k+1)/2$.

Analogously, the kernels in  $F_2$ are bounded by a product of kernels of type  1/2 and whose sum of types is equal to $k/2$.

Therefore, if $N$ is large enough, then the pointiwise estimate of $\tilde G$ in Lemma \ref{lem:decompforms}, together Lemma \ref{lem:estcomposK} give
\begin{align*}
|F_1(\z)|&\lesssim \int_{\B}\frac{(1-|w|^2)^{N-1/2}}{|1-\z\bar w|^{n+N-k/2}}(1-|w|^2)^{1/2-k/2} d\mu_g(w)\\
 &=\int_{\B}\frac{(1-|w|^2)^{N-k/2}}{|1-\z\bar w|^{n+N-k/2}}d\mu_g(w),
\end{align*}
where $d\mu_g(w)=\left[(1-|w|^2)|\p g(w)|^2+|\p_T g(w)^2|\right]d\nu(w)$.

Analogously, 
\begin{align*}
|F_2(\z)|&\lesssim \int_{\B}\frac{(1-|w|^2)^{N-1/2}}{|1-\z\bar w|^{n+N+1/2-k/2}}(1-|w|^2)^{1-k/2}d\mu_g(w)\\
 &=\int_{\B}\frac{(1-|w|^2)^{N+1/2-k/2}}{|1-\z\bar w|^{n+N+1/2-k/2}}d\mu_g(w).
\end{align*}

Therefore, taking $N \ge n+k/2$ we obtain the estimate in Proposition \ref{prop:estcoef}.
\end{proof}

\section{End of the proof of Theorem \ref{thm:HpMorrey}}

\subsection{Corona theorem for weighted Hardy spaces}

In order to prove the corona theorem for $H^p(\theta)$, we will use the following extrapolation theorem proved in \cite{RuFa} (see also p.223 \cite{St}).

\begin{thm} \label{thm:extrap}
Let $1<r<+\infty$, and $\T$ a sublinear operator which is bounded on $L^r(\theta)$ for any $\theta\in \A_r$, with constant depending only on the constant 
$\A_r(\theta)$ of the condition $\A_r$. Then $\T$ is bounded on $L^p(\theta)$ for any $1<p<+\infty$ and any $\theta\in \A_p$, with constant depending only on
$\A_p(\theta)$.
\end{thm}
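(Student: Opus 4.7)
This is the Rubio de Francia extrapolation theorem. My plan rests on three ingredients: (a) the boundedness of the weighted Hardy--Littlewood maximal operator $M_\theta$ on $L^q(\theta)$ for every $q>1$, which follows from the doubling property of $\theta\,d\sigma$ (Proposition \ref{prop:doubling}); (b) the Rubio de Francia iteration, which given any non-negative $h\in L^q(\theta)$ produces
$$Rh \;=\; \sum_{k=0}^{\infty} \frac{M_\theta^k h}{(2\|M_\theta\|_{L^q(\theta)})^{k}},$$
satisfying $h\le Rh$, $\|Rh\|_{L^q(\theta)}\le 2\|h\|_{L^q(\theta)}$, and the pointwise control $M_\theta(Rh)\le 2\|M_\theta\|_{L^q(\theta)}\,Rh$; and (c) the Jones factorization of $\A_p$ weights as $\A_1\cdot \A_1^{1-p}$, which allows products of $\A_1$ weights with $\A_p$ weights to be identified as new $\A_s$ weights with quantitative control of the constants.

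For the case $p>r$ I would fix $\theta\in\A_p$ and $f\in L^p(\theta)$ and, by duality in $L^{p/r}(\theta)$, write
$$\|\T f\|_{L^p(\theta)}^r \;=\; \sup_{h\ge 0,\;\|h\|_{L^{(p/r)'}(\theta)}\le 1} \int_{\bB}|\T f|^r\,h\,\theta\,d\sigma.$$
Applying the iteration $R$ displayed above, I obtain $Rh\ge h$ with $\|Rh\|_{L^{(p/r)'}(\theta)}\le 2$ and $M_\theta(Rh)\le C\,Rh$. This last pointwise condition, combined with $\theta\in\A_p$ and the Jones factorization, yields that the new weight $\omega:=Rh\cdot \theta$ belongs to $\A_r$, with $\A_r$-constant depending only on $\A_p(\theta)$ and the operator norm of $M_\theta$ on $L^{(p/r)'}(\theta)$. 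Invoking the hypothesis of the theorem at exponent $r$ with weight $\omega$ then gives $\int|\T f|^r\omega \le C\int |f|^r\omega$, and one single H\"older estimate with exponents $p/r$ and $(p/r)'$ (using the controlled norm of $Rh$) closes the chain into $\|\T f\|_{L^p(\theta)}^r \le C\|f\|_{L^p(\theta)}^r$.

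The complementary case $p<r$ follows a parallel construction: one builds an auxiliary $\A_r$ weight directly from $\theta$ by inserting iterated maximal functions into the definition of the weight, then transfers the $L^r(\omega)$ bound back to an $L^p(\theta)$ bound through a H\"older application in the opposite direction; the case could alternatively be set up by using the relation $\theta\in\A_p\Leftrightarrow\theta^{-p'/p}\in\A_{p'}$ of Proposition \ref{prop:Approperties}. The main technical obstacle in both cases is the bookkeeping step asserting that the iterated weight, once multiplied by $\theta$, lies in $\A_r$ with a quantitative $\A_r$-constant tied only to $\A_p(\theta)$; this is precisely where Jones factorization and the algebra of $\A_1$ weights enter, and it constitutes the heart of Rubio de Francia's argument.
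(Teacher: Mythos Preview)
The paper does not prove this theorem: it is quoted as a known result, with references to Rubio de Francia's original paper \cite{RuFa} and to p.~223 of Stein's book \cite{St}, and is then applied as a black box in the proof of Theorem \ref{thm:Hp7}. There is therefore no ``paper's own proof'' to compare against.

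Your proposal is an outline of the classical argument, and the ingredients you list---duality at exponent $p/r$, the Rubio de Francia iteration $Rh=\sum_k M_\theta^k h/(2\|M_\theta\|)^k$, and Jones factorization to pass from $\A_p$ to $\A_r$---are the right ones. The one place your sketch is genuinely incomplete is the step you yourself flag: the assertion that $\omega=Rh\cdot\theta$ lies in $\A_r$ with constant controlled by $\A_p(\theta)$. The condition $M_\theta(Rh)\le C\,Rh$ makes $Rh$ an $\A_1$ weight \emph{with respect to the measure $\theta\,d\sigma$}, not with respect to $d\sigma$, and the passage from this to $Rh\cdot\theta\in\A_r(d\sigma)$ requires an argument (in modern treatments one often avoids Jones factorization by iterating the operator $f\mapsto M(f\sigma)/\sigma$ with $\sigma=\theta^{1-p'}$ instead of $M_\theta$, which produces an $\A_1$ weight with respect to Lebesgue measure directly). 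Since the paper only cites the result, this level of detail is already more than what the paper provides.
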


\begin{thm} \label{thm:Hp7} Let $1< p<\infty$ and $0<s<n/p.$ Let $g_1,\ldots, g_m\in H^\infty$. Then, the following assertions are equivalent:

\begin{enumerate}
\item \label{item:H1} The functions $g_k$, $k=1,\ldots,m$ satisfy  $\inf\{|g(z)|:\,z\in \B\}>0.$
\item \label{item:H2} $\M_g$ maps $H^p(\theta)\times \cdots\times H^p(\theta)$ onto $H^p(\theta)$ for any $1<p<\infty$ and any $\theta\in\A_p$.
\item \label{item:H3} $\M_g$ maps $H^p(\theta)\times \cdots\times H^p(\theta)$ onto $H^p(\theta)$ for some $1<p<\infty$ and some $\theta\in\A_p$.
\item \label{item:H6} $\M_g$ maps $H^2(\theta)\times \cdots\times H^2(\theta)$ onto $H^2(\theta)$ for any   $\theta\in\A_2$.
\end{enumerate}
\end{thm}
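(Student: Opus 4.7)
The implications \eqref{item:H2} $\Rightarrow$ \eqref{item:H3} is trivial, \eqref{item:H3} $\Rightarrow$ \eqref{item:H1} is exactly Proposition \ref{prop:necH}, and \eqref{item:H1} $\Rightarrow$ \eqref{item:H6} is Theorem \ref{thm:corH2}. So the whole content of Theorem \ref{thm:Hp7} reduces to establishing \eqref{item:H6} $\Rightarrow$ \eqref{item:H2}, and for this I plan to invoke the extrapolation theorem of Rubio de Francia (Theorem \ref{thm:extrap}).

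The plan is to fix $N$ large enough so that Theorem \ref{thm:corH2} applies and to exhibit a single sublinear (in fact linear) operator $\widetilde{\T}$, defined on scalar $L^p(\theta)$ functions, which is bounded $L^2(\theta)\to L^2(\theta,\Lambda^1)$ for every $\theta\in \A_2$, with operator norm controlled by a function of the constant $\A_2(\theta)$ alone. Concretely, define
\[
\widetilde{\T}(\psi) \;=\; \bigl(\T_g^N\circ\mathcal{C}\bigr)(\psi)
\]
and identify the holomorphic image with its boundary values. For $\psi\in L^2(\theta)$, $\mathcal{C}(\psi)\in H^2(\theta)$ by Proposition \ref{prop:Approperties}, and then $\T_g^N(\mathcal{C}(\psi))\in H^2(\theta,\Lambda^1)$ by Theorem \ref{thm:corH2}, which by the same proposition has boundary values in $L^2(\theta,\Lambda^1)$. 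Composing the bounds and applying Theorem \ref{thm:extrap} to each scalar component of $\widetilde{\T}$ will upgrade the $L^2(\theta)$-estimate to an $L^p(\theta)$-estimate for every $1<p<\infty$ and every $\theta\in \A_p$.

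Once this is in hand, the conclusion is routine: for $f\in H^p(\theta)$ with boundary values $f^{*}\in L^p(\theta)$, one has $\mathcal{C}(f^{*})=f$, so $\widetilde{\T}(f^{*})$ is simply the vector of boundary values of $\T_g^N(f)$. Since $\T_g^N(f)$ is already holomorphic by Theorem \ref{thm:Koszul}, the extrapolated estimate gives $\|\T_g^N(f)\|_{H^p(\theta,\Lambda^1)}\lesssim \|f\|_{H^p(\theta)}$, and the identity $\M_g(\T_g^N(f))=f$ completes the proof that $\M_g$ is surjective onto $H^p(\theta)$.

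The only nontrivial step is to verify the hypothesis of Theorem \ref{thm:extrap}, namely that the $L^2(\theta)\to L^2(\theta,\Lambda^1)$ norm of $\widetilde{\T}$ depends on $\theta$ only through the $\A_2$-constant $\A_2(\theta)$. This will be the main obstacle, but it is really an inspection: the Cauchy projection $\mathcal{C}$ is a Calder\'on--Zygmund-type operator whose weighted $L^2$-norm is quantitatively controlled by $\A_2(\theta)$; and the proof of Theorem \ref{thm:corH2} uses only the Carleson measure estimates of Proposition \ref{lem:estKw12}, Proposition \ref{prop:charhp} and Lemma \ref{lem:estKwl1}, whose constants in turn depend on $\theta$ solely through the doubling constant of $\theta\,d\sigma$ and the $\A_2$-constant $\A_2(\theta)$ (the doubling constant being controlled by $\A_2(\theta)$ as in Proposition \ref{prop:doubling}). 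Once this quantitative dependence is made explicit, Theorem \ref{thm:extrap} produces the required bound in every $L^p(\theta)$ and the proof is complete.
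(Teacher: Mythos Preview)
Your proposal is correct and follows essentially the same route as the paper: the scheme \eqref{item:H2}$\Rightarrow$\eqref{item:H3}$\Rightarrow$\eqref{item:H1}$\Rightarrow$\eqref{item:H6}$\Rightarrow$\eqref{item:H2}, with the last implication obtained by applying Rubio de Francia's extrapolation theorem to each component of $\T_g^N\circ\mathcal{C}$. Your additional remarks on the quantitative dependence of the constants on $\A_2(\theta)$ make explicit a point the paper leaves implicit, but the argument is the same.
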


\begin{proof} We will follow the  scheme:
\eqref{item:H2}$\Rightarrow$ \eqref{item:H3}$\Rightarrow$ \eqref{item:H1}$\Rightarrow$ \eqref{item:H3}$\Rightarrow$ \eqref{item:H2}

Clearly \eqref{item:H2} $\Rightarrow$ \eqref{item:H3}. The implication \eqref{item:H3} $\Rightarrow$ \eqref{item:H1} is proved in Proposition \ref{prop:necH}. The proof of  \eqref{item:H1} $\Rightarrow$ \eqref{item:H6} is given in Theorem \ref{thm:corH2} using the linear operator $\T_g^N$. The proof of \eqref{item:H6} $\Rightarrow$ \eqref{item:H2} follows from Theorem \ref{thm:extrap} applied to 
$r=2$ and to each one of the operators $\T=\T_{g,i}^N\circ \mathcal{C}$, $i=1,\dots,m$. Here $\T_{g,i}^N$ are the components of the operator $\T_g^N$ and $\mathcal{C}$ is the Cauchy kernel.
\end{proof}

\subsection{Corona theorem for Morrey spaces}

The following result was proved in Theorem 3.1 in \cite{Ar-Mi}.

\begin{thm} \label{thm:morreyA1} 
Let $\varphi$ and $\psi$ be nonnegative Borel measurable functions on $\bB$. Suppose that for each $\alpha\ge 1$ and every bounded weight $\theta\in\A_1$, such that $\A_1(\theta)\le \alpha$, there exists $c(\alpha)$ 
such that 
$$\int_{\bB} \varphi\theta d\sigma \le c(\alpha)\int_{\bB} \psi\theta d\sigma.$$

Then, for $0<t<n$, there exists a constant $C$ depending on $n$ and $t$, such that $\|\varphi\|_{M^{1,t}}\le C \|\psi\|_{M^{1,t}}$ for any $\varphi,\psi\in M^{1,t}$.
\end{thm}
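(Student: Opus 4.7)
The plan is to derive the Morrey inequality from the weighted $L^1$ hypothesis by constructing, for each admissible ball $I = I_{\zeta,\varepsilon}$, a normalized $\A_1$ weight adapted to $I$ and plugging it into the hypothesis. The natural candidate is the Coifman--Rochberg type weight
$$\theta_I(\eta) = \bigl(M_{H-L}(\chi_I)(\eta)\bigr)^\delta,$$
with $\delta\in(0,1)$ a parameter to be chosen later. The Coifman--Rochberg construction, valid in any space of homogeneous type and in particular on $\bB$ with the nonisotropic quasi-metric $|1-\bar\zeta\eta|$ and the measure $d\sigma$, would guarantee that $\theta_I \in \A_1$ with $\A_1(\theta_I) \le \alpha(\delta)$ depending only on $n$ and $\delta$; crucially, this bound is uniform in $\zeta$ and $\varepsilon$. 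A direct computation would then give $\theta_I\equiv 1$ on $I$ (the ball $I$ itself realizes the supremum defining $M_{H-L}(\chi_I)$ at any $\eta\in I$) and, on the dyadic shell $A_k = I_{\zeta,2^{k+1}\varepsilon}\setminus I_{\zeta,2^k\varepsilon}$, the pointwise estimate $\theta_I(\eta)\approx 2^{-kn\delta}$, coming from the relation $|I_{\zeta,r}|\approx r^n$.

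With this weight in hand I would apply the hypothesis at level $\alpha(\delta)$ to obtain a constant $c=c(\delta)$ such that
$$\int_{\bB}\varphi\,\theta_I\,d\sigma \;\le\; c\int_{\bB}\psi\,\theta_I\,d\sigma.$$
The left-hand side is bounded below by $\int_I\varphi\,d\sigma$ since $\theta_I\equiv 1$ on $I$. For the right-hand side, I would split $\bB=I\cup\bigcup_{k\ge 0}A_k$ and use the Morrey control $\int_{I_{\zeta,r}}\psi\,d\sigma\le \|\psi\|_{M^{1,t}}\,r^{n-t}$ to arrive at
$$\int_{\bB}\psi\,\theta_I\,d\sigma\lesssim \|\psi\|_{M^{1,t}}\,\varepsilon^{n-t}\sum_{k\ge 0}2^{k(n-t-n\delta)}.$$
The geometric series converges provided $\delta > 1-t/n$; since $0<t<n$, I can fix once and for all such a $\delta\in(0,1)$ depending only on $n$ and $t$. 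Combining the two bounds yields $\varepsilon^{t-n}\int_I\varphi\,d\sigma\lesssim \|\psi\|_{M^{1,t}}$, and taking the supremum over admissible balls $I=I_{\zeta,\varepsilon}$ concludes the argument.

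The main obstacle is the careful verification of the two properties of $\theta_I$ on $(\bB,|1-\bar\zeta\eta|,d\sigma)$: the uniform $\A_1$ bound $\A_1(\theta_I)\le \alpha(\delta)$ independent of $(\zeta,\varepsilon)$, and the annular decay $\theta_I\approx 2^{-kn\delta}$ on $A_k$. Both rest on the doubling property of $d\sigma$ on $\bB$ recorded in Proposition \ref{prop:doubling} and on the pointwise identity $M_{H-L}(\chi_I)(\eta)\approx |I|/|\widetilde I(\eta)|$, where $\widetilde I(\eta)$ is the smallest ball of the family $\{I_{\zeta,r}\}_{r>0}$ containing $\eta$. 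Once these two ingredients are granted, the extrapolation scheme above is completely parallel to the classical Euclidean case, and the condition $0<t<n$ is precisely what provides the admissible range $1-t/n<\delta<1$ needed to make the geometric series summable.
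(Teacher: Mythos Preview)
The paper does not prove this statement; it is quoted verbatim as Theorem~3.1 of Arai--Mizuhara \cite{Ar-Mi} and used as a black box. Your argument is the standard proof of that result: build an $\A_1$ weight adapted to the ball $I$ via the Coifman--Rochberg construction $\theta_I=(M_{H-L}\chi_I)^\delta$, apply the hypothesis, and sum the dyadic shells. It is correct as written.

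Two small remarks worth tightening. First, the hypothesis asks for \emph{bounded} $\A_1$ weights; this is automatic here since $M_{H-L}\chi_I\le 1$ pointwise, hence $\theta_I\le 1$, but it is worth saying explicitly. Second, your description of $\widetilde I(\eta)$ as a ball from the family $\{I_{\zeta,r}\}_{r>0}$ (centered at $\zeta$) does not literally match the centered maximal function at $\eta$ (which uses balls $I_{\eta,r}$); the doubling property of $d\sigma$ reconciles the two up to constants, and this is exactly the point you flag in your last paragraph, so the gap is only notational.
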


\begin{thm} \label{thm:Morrey7} Let $1< p<\infty$ and $0<s<n/p.$ Let $g_1,\ldots, g_m\in H^\infty$. Then, the following assertions are equivalent:

\begin{enumerate}
\item \label{item:M1} The functions $g_k$, $k=1,\ldots,m$ satisfy  $\inf\{|g(z)|:\,z\in \B\}>0.$
\item \label{item:M4} $\M_g$ maps $HM^{p,s}\times\cdots\times  HM^{p,s}$ onto $HM^{p,s}$ for any $1<p<\infty$ and any $0<s<n/p$.
\item \label{item:M5} $\M_g$ maps $HM^{p,s}\times\cdots\times  HM^{p,s}$ onto $HM^{p,s}$ for some  $1<p<\infty$ and some  $0<s<n/p$.
\end{enumerate}
\end{thm}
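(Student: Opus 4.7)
The plan is to follow the scheme \eqref{item:M4}$\Rightarrow$\eqref{item:M5}$\Rightarrow$\eqref{item:M1}$\Rightarrow$\eqref{item:M4}. The first arrow is trivial and the second is Proposition \ref{prop:necM}, so all the work lies in \eqref{item:M1}$\Rightarrow$\eqref{item:M4}, which I intend to deduce from the weighted Hardy result Theorem \ref{thm:Hp7} via the Adams--Mitchell extrapolation Theorem \ref{thm:morreyA1}.

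Given $f\in HM^{p,s}$, the embedding $HM^{p,s}\subset H^p$ (Proposition \ref{prop:embed:MHp}) guarantees that the linear solution operator $\T_g^N$ from Section \ref{sect:coronam} is defined on $f$, produces $\T_g^N(f)\in H^p(\Lambda^1)$, and satisfies $\delta_{g^*}(\T_g^N(f))=f$. The task reduces to showing that each component $\T_{g,i}^N(f)$ belongs to $HM^{p,s}$ with norm controlled by $\|f\|_{M^{p,s}}$. The crucial observation is the scaling identity
\[
\|h\|_{M^{p,s}}^p=\big\||h|^p\big\|_{M^{1,sp}},
\]
valid because $0<sp<n$. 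Applying Theorem \ref{thm:morreyA1} with $\varphi=|\T_{g,i}^N(f)^*|^p$, $\psi=|f^*|^p$, and $t=sp$ then reduces the target Morrey estimate to the weighted scalar inequality
\[
\int_{\bB}|\T_{g,i}^N(f)^*|^p\,\theta\,d\sigma\le c(\alpha)\int_{\bB}|f^*|^p\,\theta\,d\sigma
\]
for every bounded $\theta\in\A_1$ with $\A_1(\theta)\le\alpha$, which is precisely the $L^p(\theta)$-boundedness of $\T_{g,i}^N$ on boundary values for $\A_1$ weights.

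This last inequality is furnished by Theorem \ref{thm:Hp7} together with the elementary bound $\A_p(\theta)\le \A_1(\theta)$ for $\theta\in\A_1$ (obtained from $\frac1{|I|}\int_I\theta^{-1/(p-1)}\,d\sigma\le (\inf_I\theta)^{-1/(p-1)}$ and the definition of $\A_1$); hence the Hardy constant $C(\A_p(\theta))$ is dominated by a function of $\alpha$ alone, and Theorem \ref{thm:morreyA1} closes the argument. The main obstacle is more conceptual than analytic: one has to recognize the scaling identity that recasts the Morrey estimate as an $L^1(\theta)$ inequality, and then match the weight classes via the inclusion $\A_1\subset\A_p$. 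Once these two observations are in place, no additional work on the operator $\T_g^N$ is required.
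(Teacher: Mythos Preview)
Your argument is correct and follows exactly the route the paper takes: the same implication scheme, the same use of Proposition~\ref{prop:necM} for necessity, and the same application of Theorem~\ref{thm:morreyA1} to $\varphi=|\T_g^N(f)|^p$, $\psi=|f|^p$, $t=sp$ combined with $\A_1\subset\A_p$ and Theorem~\ref{thm:Hp7}. The only difference is cosmetic---you spell out the constant-tracking $\A_p(\theta)\le\A_1(\theta)$ that the paper leaves implicit---and one slip of attribution: Theorem~\ref{thm:morreyA1} is due to Arai--Mizuhara, not Adams--Mitchell.
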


\begin{proof}
The scheme of the proof of the Morrey case is similar and we will show in this case that
\begin{center}
\eqref{item:M4} $\Rightarrow$ \eqref{item:M5} $\Rightarrow$ \eqref{item:M1} $\Leftrightarrow$ \eqref{item:H2}[Theorem \ref{thm:Hp7}]  $\Rightarrow $ \eqref{item:M4}.
\end{center}

The first implication is obvious, and the proof of the second is given in Proposition \ref{prop:necM}. 
The proof of  \eqref{item:H2}[Theorem \ref{thm:Hp7}] $\Rightarrow $ \eqref{item:M4} follows from Theorem \ref{thm:morreyA1}. 
Observe that, if $1<p<\infty$,  $\varphi=|\T_g(f)|^p$, $\psi=|f|^p$ and $t=sp<n$, then the fact that $\A_1\subset \A_p$, \eqref{item:HM2}[Theorem \ref{thm:Hp7}] and 
Theorem \ref{thm:morreyA1} give 
$$\||\T_g(f)|\|_{M^{p,s}}^p=\||\T_g(f)|^p\|_{M^{1,sp}}\le C \||f|^p\|_{M^{1,sp}}=\|f\|_{M^{p,s}}^p,$$
which proves the result.

\end{proof}

\end{document}